\let\cal=\mathcal
\newtheorem{theo}{Theorem}[section]
\newtheorem{lemma}[theo]{Lemma}
\newtheorem{claim}[theo]{Claim}
\newtheorem{coro}[theo]{Corollary}
\newtheorem{proposition}[theo]{Proposition}
\newtheorem{result}[theo]{Result}
\newcommand*{\qed}{\hfill\ensuremath{\square}}
\begin{document}

\title{Exact extremal non-trivial cross-intersecting families\thanks{{E-mail:  $^\dag$wu@hunnu.edu.cn (B. Wu), $^\ddag$huajunzhang@usx.edu.cn (H. Zhang, corresponding author)}}}

\author{
Biao Wu$^{\dag}$,
Huajun Zhang  $^{\ddag}$\\[2ex]
 {\small $^{\dag}$ School of Mathematics and Statistics, Hunan Normal University} \\
{\small Changsha, Hunan, 410081, P.R. China } \\
{\small $^{\ddag}$ Department of Mathematics, Shaoxing University} \\
{\small Shaoxing, Zhejiang, 321004, P.R. China }}

\maketitle

\begin{abstract}
Two families $\mathcal{A}$ and $\mathcal{B}$ of sets are called cross-intersecting if each pair of sets $A\in \mathcal{A}$ and  $B\in \mathcal{B}$ has nonempty intersection. Let $\cal{A}$ and ${\cal B}$ be two cross-intersecting families of $k$-subsets and $\ell$-subsets of $[n]$. Matsumoto and Tokushige [J. Combin. Theory Ser. A 52 (1989) 90--97] studied the extremal problem of the size $|\cal{A}||\cal{B}|$ and obtained the uniqueness of extremal families whenever $n\ge 2 \ell\ge 2k$, building on the work of Pyber. This paper will explore the second extremal size of $|\cal{A}||\cal{B}|$ and obtain that if $\mathcal{A}$ and $\mathcal{B}$ are not the subfamilies of Matsumoto--Tokushige's extremal families, then, for $n\ge  2\ell >2k$ or $n>  2\ell=2k$,
\begin{itemize}
\item[1)]either $|\cal{A}||\cal{B}|\le \left({\binom{n-1}{k-1}}+{\binom{n-2 }{k-1}}\right){\binom{n-2}{\ell-2}}$ with the unique extremal families (up to isomorphism)
    \[\mbox{$\mathcal{A}=\{A\in {\binom{[n]}{k}}: 1\in A \: \rm{ or} \: 2\in A\}$ \quad and \quad  $\mathcal{B}=\{B\in {\binom{[n]}{\ell}}: [2] \subseteq B\}$};\]
\item[2)] or $|\cal{A}||\cal{B}|\le \left({\binom{n-1}{k-1}}+1\right)\left({\binom{n-1}{\ell-1}}-{\binom{n-k-1}{\ell-1}}\right)$ with the unique extremal families (up to isomorphism)
    \[\mbox{$\mathcal{A}=\{A\in {\binom{[n]}{k}}: 1\in A\}\cup \{[2,k+1] \}$\quad and \quad $\mathcal{B}=\{B\in {\binom{[n]}{\ell}}: 1\in B, B\cap [2,k+1]\neq \emptyset \}$.}\]
\end{itemize}
The bound ``$n\ge  2\ell >2k$ or $n>  2\ell=2k$" is sharp for $n$.
To achieve the above results, we establish some size-sensitive inequalities for cross-intersecting families. As by-products, we will recover the main results of Frankl and Kupavskii [European J. Combin. 62 (2017) 263--271].

\end{abstract}

Keywords: nontrivial cross-intersecting, extremal combinatorics.

\section{Introduction}

For any positive integer $n$, let $[n]=\{1,2,\ldots ,n\}$.
The power set $2^{[n]}$ is the collection of all subsets of $[n]$. Denote by $\binom{[n]}{k}$ the collection
of all subsets of $[n]$ of size $k$. A family $\mathcal{A}\subseteq 2^{[n]}$ is called {\it intersecting}  if  $A\cap B\neq \emptyset$ for all $A,B\in \mathcal{A}$. Two families $\mathcal{A},\mathcal{B}\subseteq 2^{[n]}$ are called {\it cross-intersecting} if $A\cap B\neq \emptyset$ for all $A\in \mathcal{A}$ and $B\in \mathcal{B}$. Notice that an intersecting family $\cal A$ coincides with the cross-intersecting families $\cal A$ and $\cal A$. The study on extremal problems of intersecting families dates back to the celebrated work of Erd\H{o}s-Ko-Rado.

\begin{theo}[Erd\H{o}s--Ko--Rado \cite{EKR1961}]\label{EKR1961}
Let $n$ and $k$ be two positive integers with $n\ge 2k$. If a family $\mathcal{F} \subseteq \binom{[n]}{k}$ is intersecting,
then
 $ |\mathcal{F}| \le {\binom{n-1}{k-1}}$.
%  Moreover, if $n>2k$,
%  equality holds if and only if
%  $\mathcal{F}=\{F\in \tbinom{[n]}{k}: x \in F\}$
%  for some $x \in [n]$.
\end{theo}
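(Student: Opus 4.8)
The plan is to prove the bound via Katona's cyclic permutation (the ``circle'') method, which gives the shortest route to the estimate and makes transparent exactly where the hypothesis $n\ge 2k$ is used. The idea is to arrange the ground set $[n]$ around a circle and, over all circular arrangements, count how often a member of $\mathcal{F}$ appears as a block of $k$ consecutive points (an \emph{arc}). A single double-counting inequality then yields $|\mathcal{F}|\le\binom{n-1}{k-1}$.

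Concretely, I would run over all $(n-1)!$ circular arrangements of $[n]$ and count the incidences $N$, that is, the pairs $(C,A)$ where $C$ is a circular arrangement and $A\in\mathcal{F}$ occurs as an arc of $C$. Counting by sets first: a fixed $k$-set $A$ appears as an arc exactly when its $k$ elements sit consecutively, so treating $A$ as one block leaves $n-k+1$ objects to arrange in a circle, giving $k!\,(n-k)!$ arrangements in which $A$ is an arc; hence $N=|\mathcal{F}|\,k!\,(n-k)!$. Counting by arrangements instead: since $\mathcal{F}$ is intersecting, the arcs of a given arrangement that lie in $\mathcal{F}$ are pairwise intersecting, so if I can bound the number of such arcs by $k$ I get $N\le k\,(n-1)!$. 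Comparing the two expressions then gives $|\mathcal{F}|\le \frac{k\,(n-1)!}{k!\,(n-k)!}=\binom{n-1}{k-1}$.

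The heart of the argument, and the step I expect to be the main obstacle, is the lemma that on a fixed circle any intersecting family of $k$-arcs has size at most $k$ when $n\ge 2k$. I would prove it as follows. Fix one arc $A$ of the family and label each arc by its starting point. Every arc $B$ of the family must meet $A$, so its starting point can differ from that of $A$ by at most $k-1$ in either direction, which singles out $2(k-1)$ candidate arcs besides $A$ itself. I would then pair these up into $k-1$ pairs, each consisting of one candidate on each side of $A$ whose starting points differ by exactly $k$. Two such arcs together occupy a run of $2k$ consecutive points, and the hypothesis $n\ge 2k$ is precisely what allows these $2k$ points to fit on the circle without overlap; hence the two arcs of each pair are disjoint and at most one of them can lie in the intersecting family. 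Together with $A$, this bounds the family by $k$. Checking that the pairing is exhaustive and that $n\ge 2k$ forces the disjointness is the delicate bookkeeping; everything else is routine.

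Once the lemma is in hand, the theorem is immediate from the double count above, so the whole proof reduces to the circle lemma. An alternative I would keep in reserve is the shifting (compression) method: repeatedly apply $(i,j)$-shifts to $\mathcal{F}$ and argue directly that a fully shifted intersecting family has at most $\binom{n-1}{k-1}$ members. This avoids the circle lemma but replaces it with the verification that each shift preserves both the cardinality of $\mathcal{F}$ and the intersecting property, which is the analogous crux of that approach.
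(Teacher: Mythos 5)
Your proposal is correct. Note, however, that the paper does not prove this statement at all: Theorem \ref{EKR1961} is quoted as a classical result with a citation, and the surrounding text merely points to several known proofs, explicitly including the Katona cycle proof you chose. Your write-up is a faithful and complete version of that argument: the double count $N=|\mathcal{F}|\,k!\,(n-k)!\le k\,(n-1)!$ gives $|\mathcal{F}|\le k\,(n-1)!/\bigl(k!\,(n-k)!\bigr)=\binom{n-1}{k-1}$, and your circle lemma is handled correctly --- the $2(k-1)$ arcs meeting a fixed arc $A$ are paired by starting points differing by exactly $k$, each pair covers $2k$ consecutive positions, and $n\ge 2k$ guarantees these positions are distinct on the circle, so the two arcs in a pair are disjoint and at most one can belong to an intersecting family, yielding the bound $k$ per cyclic arrangement. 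The only bookkeeping point worth making explicit is that $n\ge 2k$ also ensures the $2k-1$ candidate starting positions are pairwise distinct modulo $n$, so no candidate arc is counted twice; with that remark the proof is complete, and your reserve shifting argument is likewise a standard valid alternative.
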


The Erd\H{o}s--Ko--Rado theorem has been investigated in depth with various proofs;
see \cite{Kat1972,Fra2021} for the famous Katona cycle proof,
\cite{FHW2006} for  proof applying the method of linearly independent polynomials,
and \cite{Day1974,Kat1964,FF2012} for short proofs  using
the shadow technique. Readers can refer to \cite{FT2016,FT2018} for the
comprehensive surveys on recent results related to EKR theorem. It is known to us that in 1986 Pyber gave the first generalization of the EKR theorem from an intersecting family to two cross-intersecting families in the way of size product.

\begin{theo} [Pyber \cite{Pyber}] \label{Pyber}
Let $n, k$ and $\ell$ be three positive integers. If two families $\mathcal{A}\subseteq {\binom{[n]}{k}}$ and $\mathcal{B}\subseteq {\binom{[n]}{\ell}}$ are cross-intersecting, then
\begin{itemize}
\item[\rm {1)}] $|\mathcal{A}||\mathcal{B}|\leq {\binom{n-1}{k-1}}^2$ if $k=\ell$ and $n\ge 2k$, and
\item[{\rm 2)}] $ |\mathcal{A}||\mathcal{B}|\leq {\binom{n-1}{k-1}}{\binom{n-1}{\ell-1}}$ if $k>\ell$ and $n\ge 2k+\ell-2$.
\end{itemize}
\end{theo}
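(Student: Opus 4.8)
The plan is to prove a single statement covering both items, namely that $|\mathcal{A}||\mathcal{B}|\le \binom{n-1}{k-1}\binom{n-1}{\ell-1}$ (which specializes to item~1 when $k=\ell$), by induction on $n$ after first normalizing the pair $(\mathcal{A},\mathcal{B})$. I would begin by recalling the simultaneous shifting operation $S_{ij}$ for $i<j$: applying $S_{ij}$ to both $\mathcal{A}$ and $\mathcal{B}$ preserves the uniformities, the cardinalities $|\mathcal{A}|,|\mathcal{B}|$, and the cross-intersecting property. Iterating over all pairs $i<j$, I may assume both families are shifted (stable under every $S_{ij}$). Since the target bound and the hypotheses are symmetric under interchanging $(k,\mathcal{A})$ with $(\ell,\mathcal{B})$, I also assume $k\ge \ell$ throughout.

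Next I would split at the top element $n$. Write $\mathcal{A}_0=\{A\in\mathcal{A}:n\notin A\}$ and $\mathcal{A}_1=\{A\setminus\{n\}:n\in A\in\mathcal{A}\}$, and similarly $\mathcal{B}_0,\mathcal{B}_1$, all regarded as families on $[n-1]$ of sizes $k,k-1,\ell,\ell-1$. Three of the four pairs are immediately cross-intersecting on $[n-1]$, namely $(\mathcal{A}_0,\mathcal{B}_0)$, $(\mathcal{A}_0,\mathcal{B}_1)$ and $(\mathcal{A}_1,\mathcal{B}_0)$, because in each case one side omits $n$, so a witnessing intersection already lies in $[n-1]$. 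The crucial and less obvious point is that $(\mathcal{A}_1,\mathcal{B}_1)$ is cross-intersecting as well, and this is where shiftedness and the bound on $n$ enter: if $A'\in\mathcal{A}_1$ then $A'\cup\{n\}\in\mathcal{A}$, and shiftedness lets me replace $n$ by any $j\in[n-1]\setminus A'$, giving $A'\cup\{j\}\in\mathcal{A}$; crossing with $B'\cup\{n\}\in\mathcal{B}$ forces $(A'\cup\{j\})\cap B'\ne\emptyset$ for every such $j$. If $A'\cap B'=\emptyset$ this would require $[n-1]\setminus A'\subseteq B'$, i.e.\ $n-k\le\ell-1$; since $n\ge 2k+\ell-2\ge k+\ell$ this is impossible, so $A'\cap B'\ne\emptyset$. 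Reducing this cross-intersecting verification to the single inequality $n\ge k+\ell$ is, I expect, the main conceptual step.

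With all four pairs cross-intersecting, I would run a strong induction on $n$ over all admissible parameter pairs. The induction hypothesis bounds the four sub-products by $\binom{n-2}{k-1}\binom{n-2}{\ell-1}$, $\binom{n-2}{k-1}\binom{n-2}{\ell-2}$, $\binom{n-2}{k-2}\binom{n-2}{\ell-1}$ and $\binom{n-2}{k-2}\binom{n-2}{\ell-2}$; expanding $|\mathcal{A}||\mathcal{B}|=(|\mathcal{A}_0|+|\mathcal{A}_1|)(|\mathcal{B}_0|+|\mathcal{B}_1|)$ and summing, Pascal's identity collapses the total to exactly $\big(\binom{n-2}{k-1}+\binom{n-2}{k-2}\big)\big(\binom{n-2}{\ell-1}+\binom{n-2}{\ell-2}\big)=\binom{n-1}{k-1}\binom{n-1}{\ell-1}$. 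Thus the inductive step is a clean telescoping, with no loss; all the genuine work sits in verifying that each of the four sub-instances still lies in the range needed to invoke it.

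That verification is where I expect the real friction, and it forces separate base cases. The pair $(\mathcal{A}_0,\mathcal{B}_0)$ keeps parameters $(k,\ell)$ and needs $n-1\ge 2k+\ell-2$, so the boundary $n=2k+\ell-2$ (and, in the diagonal regime $k=\ell$, the value $n=2k$) must be handled by hand. When $k=\ell$ and $n=2k$, complementation is decisive: cross-intersecting means $\mathcal{B}\cap\overline{\mathcal{A}}=\emptyset$, so $|\mathcal{B}|\le\binom{2k}{k}-|\mathcal{A}|$ and hence $|\mathcal{A}||\mathcal{B}|\le\frac14\binom{2k}{k}^2=\binom{2k-1}{k-1}^2$ by the arithmetic--geometric mean inequality; the off-diagonal base case $n=2k+\ell-2$ admits a similar direct estimate. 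The delicate bookkeeping is that the shifted sub-instances with parameters $(k,\ell-1)$ and $(k-1,\ell)$ can drift to the edge of, or just outside, this clean range, especially near the diagonal where $k-1=\ell$ (indeed the product bound genuinely fails below the stated range, for instance at $n=k+\ell$, which is exactly why the hypothesis on $n$ is sharp). I would therefore either establish the product bound in the slightly wider range actually demanded by the recursion, or absorb the troublesome terms using the slack in the other summands. Pinning down this range arithmetic, together with the arithmetic--geometric mean step powering the diagonal base case, is the technical heart of the argument; the rest is formal.
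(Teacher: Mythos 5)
The paper states Pyber's theorem only as cited background (Theorem~\ref{Pyber}) and contains no proof of it, so your proposal has to stand on its own. Its skeleton is the standard compression argument, and the parts you actually carry out are correct: simultaneous shifting preserves uniformity, sizes and cross-intersection; the pairs $(\mathcal{A}_0,\mathcal{B}_0)$, $(\mathcal{A}_0,\mathcal{B}_1)$, $(\mathcal{A}_1,\mathcal{B}_0)$ are trivially cross-intersecting; your lemma that shiftedness plus $n\ge k+\ell$ forces $(\mathcal{A}_1,\mathcal{B}_1)$ to be cross-intersecting is right (the argument via $[n-1]\setminus A'\subseteq B'$ needing $n\le k+\ell-1$ is exactly the standard one); the four-term Pascal collapse to $\binom{n-1}{k-1}\binom{n-1}{\ell-1}$ is exact; and the diagonal base case $k=\ell$, $n=2k$ via the complement bijection and AM--GM is valid, since $\frac14\binom{2k}{k}^2=\binom{2k-1}{k-1}^2$.

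The genuine gap is the one you half-admit and then defer: the recursion does not stay inside the theorem you are proving, and the repair you sketch does not exist in the form claimed. Concretely, at diagonal parameters $k=\ell$ your hypothesis is only $n\ge 2k$, but the sub-instance $(\mathcal{A}_0,\mathcal{B}_1)$ has uniformities $(k,k-1)$ on $[n-1]$, and Pyber's range for that pair demands $n-1\ge 2k+(k-1)-2$, i.e.\ $n\ge 3k-2$; so for every $k\ge 4$ and every $n$ with $2k<n<3k-2$ the induction invokes instances the theorem does not cover (and symmetrically for $(\mathcal{A}_1,\mathcal{B}_0)$). Your first escape route --- proving the bound ``in the slightly wider range actually demanded by the recursion'' --- means proving it for $n\ge 2\max(k,\ell)$, the Matsumoto--Tokushige range (Theorem~\ref{theoMT}); that fixes the interior steps but relocates the base case to $n=2\max(k,\ell)$ with $k\ne\ell$, where your complementation trick is unavailable: the complement bijection between $\binom{[n]}{k}$ and $\binom{[n]}{\ell}$ exists only when $n=k+\ell$, and there the bound is false (as you yourself note). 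That boundary case is precisely the hard content of \cite{MT1989}, not a routine estimate. Your second escape route, the ``similar direct estimate'' for the off-diagonal base $n=2k+\ell-2$, $k>\ell$, is asserted without any argument --- and since the inductive step above it is lossless telescoping, this base case carries essentially the entire content of Pyber's part 2 (Pyber's own proof handles it by a different, non-inductive mechanism). So what remains open in your proposal is not bookkeeping but the theorem's substance; flagging the friction does not fill it.
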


In Pyber's second case $\ell>k$,  the upper bound was given for $n\ge 2\ell+k-2$, which was improved to
$n\ge 2\ell > 2k$ by Matsumoto and Tokushige in 1989. Together with Pyber's first case, it completes the extremal problem of size product for cross-intersecting families $\mathcal{A}\subseteq {\binom{[n]}{k}}$ and $\mathcal{B}\subseteq {\binom{[n]}{\ell}}$.

\begin{theo}[Matsumoto--Tokushige \cite{MT1989}] \label{theoMT}
Let $n, k$ and $\ell$ be three positive integers  with $n\ge 2\ell \ge 2k$. If two families $\mathcal{A}\subseteq {\binom{[n]}{k}}$ and $\mathcal{B}\subseteq {\binom{[n]}{\ell}}$ are cross-intersecting, then
$ |\mathcal{A}||\mathcal{B}|\leq {\binom{n-1}{k-1}}{\binom{n-1}{\ell-1}}$.
Equality holds if and only if
$\mathcal{A}=\{A\in {\binom{[n]}{k}}: x\in A\}$ and $\mathcal{B}=\{B\in {\binom{[n]}{\ell}}: x\in B\}$ for some $x\in [n]$.
\end{theo}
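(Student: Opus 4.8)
The plan is to fix $\mathcal{B}$, bound the largest cross-intersecting $\mathcal{A}$ by a shadow, and then reduce everything to a one-variable optimisation controlled by the Kruskal--Katona theorem. First I would pass to complements. Since $n\ge 2\ell\ge 2k$ we have $k\le \ell\le n-\ell$, so set $\mathcal{B}^{c}=\{[n]\setminus B: B\in\mathcal{B}\}\subseteq\binom{[n]}{n-\ell}$. A $k$-set $A$ meets every member of $\mathcal{B}$ precisely when $A$ is contained in no member of $\mathcal{B}^{c}$; equivalently, $\mathcal{A}$ is disjoint from the lower shadow $\partial_{k}(\mathcal{B}^{c})$, the family of all $k$-subsets lying inside some member of $\mathcal{B}^{c}$ (a well-defined iterated shadow because $k\le n-\ell$). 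Hence, for fixed $\mathcal{B}$, the optimal $\mathcal{A}$ is $\binom{[n]}{k}\setminus\partial_{k}(\mathcal{B}^{c})$, and for every cross-intersecting pair
\[
|\mathcal{A}|\,|\mathcal{B}|\le\Bigl(\binom{n}{k}-\bigl|\partial_{k}(\mathcal{B}^{c})\bigr|\Bigr)\,|\mathcal{B}^{c}|.
\]

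Next I would apply the Kruskal--Katona theorem to the iterated lower shadow from level $n-\ell$ down to level $k$: among all subfamilies of $\binom{[n]}{n-\ell}$ of a prescribed size, an initial segment of colexicographic order minimises the size of $\partial_{k}(\cdot)$. Writing $b=|\mathcal{B}|$, this yields an explicit lower bound $\bigl|\partial_{k}(\mathcal{B}^{c})\bigr|\ge\varphi(b)$ depending only on $b,n,k,\ell$, and reduces the whole problem to maximising the discrete function
\[
F(b)=b\Bigl(\binom{n}{k}-\varphi(b)\Bigr),\qquad 1\le b\le\binom{n}{n-\ell}.
\]
At the star size $b_{\star}=\binom{n-1}{\ell-1}=\binom{n-1}{n-\ell}$ the extremal family $\mathcal{B}^{c}=\binom{[n-1]}{n-\ell}$ has shadow $\binom{[n-1]}{k}$, so $\varphi(b_{\star})=\binom{n-1}{k}$ and $F(b_{\star})=\binom{n-1}{\ell-1}\binom{n-1}{k-1}$, which is the asserted bound.

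The hard part will be to show that $F$ attains its maximum, and attains it \emph{only}, at $b=b_{\star}$, and this is exactly where the hypothesis $n\ge 2\ell\ (\ge 2k)$ is used. The continuous relaxation coming from the Lov\'asz form of Kruskal--Katona is \emph{not} sharp enough here: already for $n=4,\ \ell=2,\ k=1$ the smooth bound $\binom{x}{2}\bigl(\binom{4}{1}-\binom{x}{1}\bigr)$ overshoots the true value $F(b_{\star})=3$ near $x\approx 2.87$, so one genuinely must work with the exact staircase $\varphi$. I would therefore analyse the discrete increments $F(b)-F(b-1)$ along the Kruskal--Katona staircase, i.e.\ weigh the marginal loss $\varphi(b)-\varphi(b-1)$ in the available size of $\mathcal{A}$ against the relative gain in $|\mathcal{B}|$, and prove that the comparison forces the maximum to sit at $b_{\star}$ precisely when $n\ge 2\ell$; when $2\ell>n$ the optimiser migrates (for $k=\ell>n/2$ every two $k$-sets already meet, so $\mathcal{A}=\mathcal{B}=\binom{[n]}{k}$ beats the star), which is what makes the range sharp. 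This monotonicity/convexity estimate on the shadow staircase is the main technical obstacle.

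Finally, for the equality characterisation I would invoke the uniqueness part of Kruskal--Katona (F\"uredi--Griggs / M\"ors): because $b_{\star}=\binom{n-1}{n-\ell}$ is itself a binomial coefficient, tightness of the shadow bound forces $\mathcal{B}^{c}$ to be a full clique $\binom{[n]\setminus\{x\}}{n-\ell}$ for some $x\in[n]$, that is $\mathcal{B}=\{B\in\binom{[n]}{\ell}: x\in B\}$. Tightness of $|\mathcal{A}|=\binom{n}{k}-\bigl|\partial_{k}(\mathcal{B}^{c})\bigr|$ then forces $\mathcal{A}=\binom{[n]}{k}\setminus\binom{[n]\setminus\{x\}}{k}=\{A\in\binom{[n]}{k}: x\in A\}$, and the only freedom is the common centre $x$, matching the stated characterisation.
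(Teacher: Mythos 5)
Your reduction is sound as far as it goes: complementation, the equivalence of cross-intersection with $\mathcal{A}\cap\partial_k(\mathcal{B}^c)=\emptyset$, the resulting bound $|\mathcal{A}||\mathcal{B}|\le b\bigl(\binom{n}{k}-\varphi(b)\bigr)=F(b)$ with $\varphi$ the Kruskal--Katona staircase, and the observation (with a correct small counterexample) that the smooth Lov\'asz relaxation overshoots and cannot close the argument. But the proof stops exactly where the theorem begins. The assertion that $F(b)\le F(b_\star)$ for all $1\le b\le\binom{n}{n-\ell}$, with equality only at $b_\star=\binom{n-1}{\ell-1}$, under the hypothesis $n\ge 2\ell$, is not a routine ``monotonicity/convexity estimate'' to be filled in later: it is the entire content of the Matsumoto--Tokushige paper, whose proof is precisely a delicate increment-by-increment analysis of the Kruskal--Katona staircase. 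You have restated the theorem in the variable $b$, not proved it, and nothing in your outline indicates how the comparison of $\varphi(b)-\varphi(b-1)$ against the gain in $b$ would actually be carried out (the staircase is not convex, and the local increments behave irregularly at the cascade breakpoints). Note also that the paper under review quotes this statement from \cite{MT1989} without proof, so the only fair comparison is with the original argument --- which your plan would essentially have to reconstruct from scratch.

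There is a second, sharper defect in the equality analysis. Your appeal to Kruskal--Katona uniqueness requires the shadow map $\partial_k$ on $\binom{[n]}{n-\ell}$ to be nontrivial, i.e.\ $k<n-\ell$. The hypothesis $n\ge 2\ell\ge 2k$ admits the boundary case $n=2\ell=2k$, where $k=n-\ell$, $\partial_k$ is the identity, and the uniqueness mechanism collapses entirely. And it must collapse, because in that case the stated ``if and only if'' is actually false: the paper's own Example (Case~2 after Theorem \ref{maintheo}) exhibits, for $n=2k=2\ell$, the non-star family $\mathcal{A}=\mathcal{B}=\{A\in\binom{[2k]}{k}:1\in A\}\cup\{[2,k+1]\}\setminus\{\{1,k+2,\dots,2k\}\}$ with $|\mathcal{A}||\mathcal{B}|=\binom{n-1}{k-1}^2$, attaining the bound. (More generally, any $\mathcal{B}$ selecting one set from each complementary pair, with $\mathcal{A}=\mathcal{B}$, works.) So your uniqueness paragraph can at best be valid for $n\ge 2\ell>2k$ or $n>2\ell=2k$ --- which is exactly the range the authors impose in their own Theorem \ref{maintheo} --- and a correct write-up must either exclude the degenerate corner or handle it separately rather than cite Kruskal--Katona uniqueness uniformly.
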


In Pyber's first case $\ell=k$, a nontrivial extremal problem is to find the upper bound of the product $|\cal A| |\cal B|$ in the case $\cap_{F\in\cal A\cup\mathcal{B}} F= \emptyset$. In 2017, Frankl and Kupavskii obtained a sharp upper bound under the conditions: $|\mathcal{A}|\leq{\binom{n-1}{k-1}}\leq|\mathcal{B}|$ and $\cap_{F\in\mathcal{B}} F= \emptyset$. More recently, the first author and Xiong completed this nontrivial extremal problem after investigating Frankl-Kupavskii's exceptional cases and established the unique extremal families.

\begin{theo}[Frankl--Kupavskii \cite{FK2017}, Wu--Xiong \cite{WX}] \label{theoFKWX}
Let $n$ and $k$ be two positive intgers with $ n\ge  2k+1$.
If two families $\mathcal{A}\subseteq {\binom{[n]}{k}}$ and $\mathcal{B}\subseteq {\binom{[n]}{\ell}}$ are cross-intersecting with $\cap_{F\in\mathcal{A}\cup\cal B}F= \emptyset$, then
\begin{equation*}\label{equwx}
 |\mathcal{A}||\mathcal{B}|\leq \left({\binom{n-1}{k-1}}-{\binom{n-k-1}{k-1}}\right)\left({\binom{n-1}{k-1}}+1\right).
 \end{equation*}
Equality holds if and only if $\mathcal{A}=\{A\in {\binom{[n]}{k}}: x\in A, A\cap D\neq \emptyset\}$ and $\mathcal{B}=\{B\in {\binom{[n]}{k}}: x\in B\}\cup \{D\}$ for some $x\in [n]$ and $D\in {\binom{[n]\setminus \{x\}}{k}}$.
\end{theo}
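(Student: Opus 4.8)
The statement is a cross-intersecting analogue of the Hilton--Milner theorem (the extremal $\mathcal{B}$ is a star plus one set, and the extremal $\mathcal{A}$ is the star of sets through $x$ meeting $D$), so the plan is to combine a size-sensitive cross-intersecting inequality with a Hilton--Milner-type stability analysis; note both families are $k$-uniform here, as the extremal configurations show. Write $a=\binom{n-1}{k-1}$ and $d=\binom{n-k-1}{k-1}$, so the target is $P=(a-d)(a+1)$; I may assume $k\ge 2$, that both families are nonempty, and, by symmetry of the product, that $|\mathcal{A}|\le|\mathcal{B}|$. An easy preliminary is that under $n\ge 2k+1$ no family here is a full star: if $\mathcal{A}=\{A:x\in A\}$, then (since $n\ge 2k$) every $B\in\mathcal{B}$ must contain $x$, forcing $x\in\bigcap_{F\in\mathcal{A}\cup\mathcal{B}}F$ and contradicting nontriviality. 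I then split on $\mathcal{B}$: \textbf{Case II}, where $\bigcap_{B\in\mathcal{B}}B\neq\emptyset$, and \textbf{Case I}, where $\bigcap_{B\in\mathcal{B}}B=\emptyset$; these correspond respectively to the situation completed by Wu--Xiong and to the original Frankl--Kupavskii regime.

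Case II is elementary. If $x\in\bigcap_{B\in\mathcal{B}}B$, then since $\bigcap_{F\in\mathcal{A}\cup\mathcal{B}}F=\emptyset$ some $A_0\in\mathcal{A}$ omits $x$; cross-intersection forces every $B\in\mathcal{B}$ to contain $x$ and meet $A_0$, so $\mathcal{B}\subseteq\{B:x\in B,\ B\cap A_0\neq\emptyset\}$ and $|\mathcal{B}|\le a-d$. With $|\mathcal{A}|\le|\mathcal{B}|$ this yields $|\mathcal{A}||\mathcal{B}|\le(a-d)^2<(a-d)(a+1)=P$, strictly (as $a>d$ for $k\ge 2$). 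So Case II can never be extremal.

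Case I is the heart, and the engine is a size-sensitive inequality via complementation and Kruskal--Katona. Put $\overline{\mathcal{B}}=\{[n]\setminus B:B\in\mathcal{B}\}\subseteq\binom{[n]}{n-k}$; cross-intersection is equivalent to $\mathcal{A}\cap\partial\overline{\mathcal{B}}=\emptyset$, where $\partial\overline{\mathcal{B}}$ denotes the family of all $k$-subsets of members of $\overline{\mathcal{B}}$. Hence $|\mathcal{A}|\le\binom{n}{k}-|\partial\overline{\mathcal{B}}|$, and Kruskal--Katona lower-bounds $|\partial\overline{\mathcal{B}}|$ by its value on a colex-initial segment of size $|\mathcal{B}|$. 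The unconstrained minimizer of this shadow is the star $\mathcal{B}=\{B:x\in B\}$, which returns the Matsumoto--Tokushige product $a^2$; but the star has $\bigcap_{B\in\mathcal{B}}B\neq\emptyset$ and is forbidden in Case I. The plan is therefore a stability argument: a non-star $\mathcal{B}$ carries strictly more shadow, and I quantify exactly how much, showing that the cheapest admissible perturbation of the star is to adjoin a single set $D\not\ni x$ (so $|\mathcal{B}|=a+1$), which deletes from $\mathcal{A}$ precisely the $d$ sets through $x$ disjoint from $D$ (so $|\mathcal{A}|\le a-d$), for a product exactly $P$. Since the universal bound $m\mapsto m\bigl(\binom{n}{k}-\Phi(m)\bigr)$ is not tight off the star, I supplement it with this exact-shadow computation; combined with monotonicity of the one-variable bound, this confines the optimum to the boundary value $|\mathcal{B}|=a+1$.

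The main obstacle is tightening the Kruskal--Katona step under the nontriviality constraint and extracting uniqueness rather than just the numerical bound. Two equality conditions must be forced simultaneously: the shadow step must pin $\overline{\mathcal{B}}$, up to isomorphism, to the colex configuration corresponding to $\mathcal{B}=\{B:x\in B\}\cup\{D\}$, while the cross-intersection step must force $\mathcal{A}$ to be the entire complement $\binom{[n]}{k}\setminus\partial\overline{\mathcal{B}}=\{A:x\in A,\ A\cap D\neq\emptyset\}$. Reconciling these two equality cases and ruling out competing near-extremal families is exactly the delicate analysis originating with Wu--Xiong beyond the Frankl--Kupavskii hypotheses $|\mathcal{A}|\le a\le|\mathcal{B}|$; it is also where $n\ge 2k+1$ is used sharply, since at $n=2k$ further families attain the bound and uniqueness fails.
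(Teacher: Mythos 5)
Your Case II and the ``no full star'' preliminary are fine, but Case I has a genuine gap exactly where the theorem is hard. For $|\mathcal{B}|\le\binom{n-1}{k-1}$ with $\bigcap_{B\in\mathcal{B}}B=\emptyset$, the complement-plus-Kruskal--Katona bound you invoke is blind to the non-star constraint: at $m=|\mathcal{B}|=\binom{n-1}{k-1}$ the colex minimizer of the shadow of $\overline{\mathcal{B}}$ gives only $|\mathcal{A}|\le\binom{n-1}{k-1}$, hence a product bound of $\binom{n-1}{k-1}^2$, which \emph{exceeds} the target $P=\left(\binom{n-1}{k-1}-\binom{n-k-1}{k-1}\right)\left(\binom{n-1}{k-1}+1\right)$. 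Your claim that ``the cheapest admissible perturbation of the star is to adjoin a single set $D$'' mis-describes the admissible families: a non-star $\mathcal{B}$ of size at most $\binom{n-1}{k-1}$ (e.g.\ a Hilton--Milner-type family) is not a star plus extra sets, and to handle this regime you need a quantitative shadow-stability input for families with $\bigcup\overline{\mathcal{B}}=[n]$ --- in the paper this is supplied by M\"ors' theorem (Theorem \ref{mors2}, including its ``both equalities only achieve simultaneously'' clause, without which the bound $\left(\binom{n-1}{k-1}-\binom{n-k-1}{k-1}+1\right)\binom{n-1}{k-1}$ is again too large). You assert ``I quantify exactly how much'' more shadow a non-star family carries, but no such quantification appears, and plain Kruskal--Katona cannot produce it.

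The other two load-bearing steps are likewise asserted rather than proved. For $|\mathcal{B}|\ge\binom{n-1}{k-1}+1$, the ``monotonicity of the one-variable bound'' $m\mapsto m\left(\binom{n}{k}-\Phi(m)\right)$ is precisely the technical heart of the result: the paper proves it through Hilton's lemma (reduction to lexicographic families), the maximal pairs $\mathcal{F}_{n,u,r,s},\mathcal{F}^{n,u,r,s}$, regularity of the Kneser graphs between the slices $\mathcal{F}_{n,k,r,s,t}$ and $\mathcal{F}^{n,\ell,r,s-1,t}$, and the binomial-ratio inequalities of Lemma \ref{lemmatheo2}, yielding Theorem \ref{theo3} and Corollary \ref{ctheo3} (this replaces, rather than reuses, your KK route, which is closer to Frankl--Kupavskii's original argument). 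Finally, uniqueness is deferred in your write-up to ``exactly the delicate analysis originating with Wu--Xiong''; in the paper it is carried out in Subsection 4.3 by combining the cascade representation of $|\overline{\mathcal{B}'}|$ with M\"ors' uniqueness theorem (Theorem \ref{mors1}) for equality in Kruskal--Katona --- that complementation step is the one place where your plan and the paper genuinely coincide, but stating that the two equality conditions ``must be reconciled'' restates the difficulty without resolving it. As it stands the proposal is a plausible roadmap in the Frankl--Kupavskii style, with the sub-star regime, the monotonicity lemma, and the uniqueness analysis all missing.
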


Motivated by Theorem \ref{theoMT} and \ref{theoFKWX}, it is natural to consider the extremal problem of size product for nontrivial cross-intersecting families in Pyber's second case $\ell>k$, which is exactly our main result of this paper.

\begin{theo} [main result]\label{maintheo}
Let $n, k, \ell$  be positive integers with $n\ge  2\ell >2k$ or $n>  2\ell=2k$.
If two families $\mathcal{A}\subseteq {\binom{[n]}{k}}$ and $\mathcal{B}\subseteq {\binom{[n]}{\ell}}$ are cross-intersecting with $\cap_{F\in \mathcal{A} \cup \mathcal{B}} F=\emptyset$, then
\begin{itemize}
\item[\rm {1)}]either $|\cal{A}||\cal{B}|\le \left({\binom{n-1}{k-1}}+{\binom{n-2 }{k-1}}\right){\binom{n-2}{\ell-2}}$ with the unique extremal families (up to isomorphism)
    \[\mbox{$\mathcal{A}=\{A\in {\binom{[n]}{k}}: 1\in A \: \rm{ or} \: 2\in A\}$ \quad and \quad  $\mathcal{B}=\{B\in {\binom{[n]}{\ell}}: [2] \subseteq B\}$};\]
\item[\rm {2)}] or $|\cal{A}||\cal{B}|\le \left({\binom{n-1}{k-1}}+1\right)\left({\binom{n-1}{\ell-1}}-{\binom{n-k-1}{\ell-1}}\right)$ with the unique extremal families (up to isomorphism)
    \[\mbox{$\mathcal{A}=\{A\in {\binom{[n]}{k}}: 1\in A\}\cup \{[2,k+1] \}$\quad and \quad $\mathcal{B}=\{B\in {\binom{[n]}{\ell}}: 1\in B, B\cap [2,k+1]\neq \emptyset \}$.}\]
\end{itemize}
 \end{theo}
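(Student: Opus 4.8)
The plan is to combine compression with an induction on $n$ organized by the link decomposition at a single element, with the real content placed in a collection of size-sensitive inequalities that bound $|\mathcal{B}|$ by an explicit decreasing function of $|\mathcal{A}|$. First I would reduce to shifted families: applying the simultaneous $(i,j)$-compressions $S_{ij}$ with $i<j$ to $\mathcal{A}$ and $\mathcal{B}$ preserves both cardinalities and cross-intersection, hence leaves $|\mathcal{A}||\mathcal{B}|$ unchanged. The only delicate point is that compression may destroy the hypothesis $\cap_{F\in\mathcal{A}\cup\mathcal{B}}F=\emptyset$; since a shift $S_{ij}$ can introduce only the index $i$ into the common intersection, I would prove a separate lemma describing exactly the pairs whose total intersection turns nonempty after one admissible shift and verify by hand that such pairs do not exceed the claimed bounds. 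Because the two extremal families in the statement are themselves shifted, assuming shiftedness costs nothing.

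Next I would fix the element $1$ and decompose. Writing $\mathcal{A}_1=\{A\setminus\{1\}:1\in A\in\mathcal{A}\}$ and $\mathcal{A}_{\bar1}=\{A\in\mathcal{A}:1\notin A\}$, and analogously $\mathcal{B}_1,\mathcal{B}_{\bar1}$, all four families live on $[2,n]$. Cross-intersection of $\mathcal{A},\mathcal{B}$ alone yields the three relations that $(\mathcal{A}_{\bar1},\mathcal{B}_{\bar1})$, $(\mathcal{A}_{\bar1},\mathcal{B}_1)$ and $(\mathcal{A}_1,\mathcal{B}_{\bar1})$ are each cross-intersecting (the pair $\mathcal{A}_1,\mathcal{B}_1$ meeting automatically at $1$), while shiftedness supplies in addition the shadow containments $\partial\mathcal{A}_{\bar1}\subseteq\mathcal{A}_1$ and $\partial\mathcal{B}_{\bar1}\subseteq\mathcal{B}_1$, which is precisely where the Kruskal--Katona shadow estimates enter. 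Since $|\mathcal{A}|=|\mathcal{A}_1|+|\mathcal{A}_{\bar1}|$ and $|\mathcal{B}|=|\mathcal{B}_1|+|\mathcal{B}_{\bar1}|$, expanding the product produces terms that are again cross-intersecting products over the $(n-1)$-element ground set $[2,n]$ but with shifted uniformities; this is the setting of the size-sensitive inequalities, which I would establish by their own induction and which recover the Frankl--Kupavskii results in their degenerate case.

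The heart of the argument is then an optimization. Both extremal configurations satisfy $\mathcal{B}_{\bar1}=\emptyset$, so that every member of $\mathcal{B}$ contains $1$, and $\mathcal{A}_1=\binom{[2,n]}{k-1}$, so that every $(k-1)$-set extends into $\mathcal{A}$; they differ only in $\mathcal{A}_{\bar1}$, which is the full star through $2$ in configuration~$1$ and the single set $[2,k+1]$ in configuration~$2$. I would therefore proceed in two stages. First, using the inequalities and the shadow containments, I would show the product is largest when $\mathcal{B}_{\bar1}=\emptyset$ and $\mathcal{A}_1$ is complete, which collapses the problem to maximizing $\bigl(\binom{n-1}{k-1}+|\mathcal{A}_{\bar1}|\bigr)|\mathcal{B}_1|$ over cross-intersecting pairs $(\mathcal{A}_{\bar1},\mathcal{B}_1)$ on $[2,n]$ of uniformities $k$ and $\ell-1$. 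Second, I would show that this weighted product, viewed as a function of $|\mathcal{A}_{\bar1}|$ along its feasible non-trivial range, attains its maximum only at the two endpoints, giving exactly the dichotomy $1)$ or $2)$ with the winner dictated by the relative sizes of the two expressions for the given $n,k,\ell$.

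The step I expect to be hardest is the second stage of this optimization: controlling $\bigl(\binom{n-1}{k-1}+|\mathcal{A}_{\bar1}|\bigr)|\mathcal{B}_1|$ uniformly across the intermediate values of $|\mathcal{A}_{\bar1}|$, where no endpoint heuristic applies and where the exact, not merely asymptotic, form of the size-sensitive inequalities is indispensable; this is also the step that consumes the full strength of the hypothesis $n\ge 2\ell>2k$ or $n>2\ell=2k$, since at $n=2\ell=2k$ the trade-off degenerates and the asserted inequality ceases to hold. A second, more bookkeeping-heavy obstacle is uniqueness: pinning down the extremal families up to isomorphism forces one to track the equality conditions of Kruskal--Katona and of each size-sensitive inequality back through the compression, to confirm that no configuration other than the two listed can tie the bound.
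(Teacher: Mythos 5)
Your stage~1 reduction is a genuine gap, and it is where most of the paper's actual work lives. You claim that, after shifting, one may assume $\mathcal{B}_{\bar 1}=\emptyset$ and $\mathcal{A}_1=\binom{[2,n]}{k-1}$, collapsing everything to the weighted one-variable problem $\bigl(\binom{n-1}{k-1}+|\mathcal{A}_{\bar 1}|\bigr)|\mathcal{B}_1|$. But this structure is not a consequence of shiftedness or maximality: maximal cross-intersecting pairs form a whole lattice of configurations (the paper's families $\mathcal{F}^{n,u,r,s}$ and $\mathcal{F}_{n,u,r,s}$ with $r\ge 2$, for instance) in which neither family contains a full star and neither is contained in one. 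In particular, the regime $|\mathcal{A}|\le\binom{n-1}{k-1}$ and $|\mathcal{B}|\le\binom{n-1}{\ell-1}$ cannot be collapsed to your optimization at all; the paper disposes of it by quoting M\"ors' bound for non-trivial cross-intersecting pairs (Theorem~\ref{mors2}), an ingredient absent from your plan. Likewise the $\mathcal{B}$-heavy regime $|\mathcal{B}|>\binom{n-1}{\ell-1}$ is not symmetric to the $\mathcal{A}$-heavy one and requires its own analysis — in the paper, Theorem~\ref{theo3} with Lemma~\ref{lemmatheo2} and the regularity of the Kneser bipartite graphs $KG(\mathcal{F}_{n,k,r,s,t},\mathcal{F}^{n,\ell,r,s-1,t})$ — followed by the comparison $\bigl(\binom{n-1}{\ell-1}+1\bigr)\bigl(\binom{n-1}{k-1}-\binom{n-\ell-1}{k-1}\bigr)\le\bigl(\binom{n-1}{k-1}+1\bigr)\bigl(\binom{n-1}{\ell-1}-\binom{n-k-1}{\ell-1}\bigr)$. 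Only once $|\mathcal{A}|>\binom{n-1}{k-1}$ is in force does the canonical structure push all of $\mathcal{B}$ into the star of $1$, and even then the endpoint-dominance you defer to ``stage~2'' is the content of Propositions~\ref{prop1}--\ref{prop5} plus Proposition~\ref{propfunction}. To your credit, stage~2 as you sketch it is exactly the paper's mechanism: Proposition~\ref{propfunction} complements $\mathcal{B}'$, applies Lov\'asz's form of Kruskal--Katona (Theorem~\ref{Lov1979}) to get $|\mathcal{A}'|\le\binom{n-2}{k-1}-\binom{x}{k-1}$, and shows the polynomial $\varphi(x)$ stays below $\Gamma$ by a convexity argument with endpoint lemmas; your identification of this as the hardest step, and of where the hypothesis $n\ge 2\ell>2k$ or $n>2\ell=2k$ is consumed, is accurate.

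The second gap is your treatment of non-triviality under compression. You correctly observe that $S_{ij}$ can only introduce $i$ into the common intersection, but the boundary case — every member of $\mathcal{A}\cup\mathcal{B}$ meets $\{i,j\}$ — is precisely the neighborhood of extremal configuration~1), so ``verify by hand that such pairs do not exceed the claimed bounds'' is itself a nontrivial extremal subproblem rather than a routine check. The paper avoids the issue entirely: by Hilton's lemma (Lemma~\ref{lem1}) it replaces the pair by lexicographic initial segments only under condition (C2), i.e.\ when $|\mathcal{A}|>\binom{n-1}{k-1}$ or $|\mathcal{B}|>\binom{n-1}{\ell-1}$, in which case the lex family properly contains a full star and the hypothesis $\cap_{F\in\mathcal{A}\cup\mathcal{B}}F=\emptyset$ is preserved automatically. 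This also matters for uniqueness: equality cases transfer poorly backwards through shifting, whereas the paper works with lex families and strict inequalities in every non-extremal range, invoking M\"ors' uniqueness theorem for Kruskal--Katona equality (Theorem~\ref{mors1}) only at the final step (Proposition~\ref{propextremal}). Finally, the induction on $n$ announced at the start of your proposal never does any work: in the expansion of $(|\mathcal{A}_1|+|\mathcal{A}_{\bar 1}|)(|\mathcal{B}_1|+|\mathcal{B}_{\bar 1}|)$ the term $|\mathcal{A}_1||\mathcal{B}_1|$ carries no cross-intersecting constraint, so the product has no inductive structure, and your argument in fact abandons it for the optimization.
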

 In the above theorem, the bound ``$n\ge  2\ell >2k$ or $n>  2\ell=2k$" is sharp for $n$, which can be seen from the following example.
\medskip
\newline
{\bf Example.} Case 1. $\ell>k$. Let $n=2\ell-1$ and $k=\ell-1$. Setting
$\mathcal{A}\subseteq {\binom{[n]}{k}}$ with $|\mathcal{A}|=\lfloor\frac 1 2 {\binom{2\ell-1}{\ell-1}}\rfloor$ and $\mathcal{B}=\{B\in {\binom{[n]}{\ell}}: [n]\setminus B \notin \mathcal{A}\}$,
thus $|\mathcal{B}|=\lceil\frac 1 2 {\binom{2\ell-1}{\ell}}\rceil$.
Then $\mathcal{A}$ and $\mathcal{B}$ are non-trivial cross-intersecting with $|\mathcal{A}||\mathcal{B}|=\lfloor\frac 1 2 {\binom{2\ell-1}{\ell-1}}\rfloor \lceil\frac 1 2 {\binom{2\ell-1}{\ell}}\rceil$.
As $\lfloor\frac 1 2 {\binom{2\ell-1}{\ell-1}}\rfloor+\lceil\frac 1 2 {\binom{2\ell-1}{\ell}}\rceil={\binom{n-1}{k-1}}+{\binom{n-2 }{k-1}}+\binom{n-2}{\ell-2}={\binom{n-1}{k-1}}+1+{\binom{n-1}{\ell-1}}-{\binom{n-k-1}{\ell-1}}$,
it is easy to see that $|\mathcal{A}||\mathcal{B}|> \max \{ ({\binom{n-1}{k-1}}+{\binom{n-2 }{k-1}}){\binom{n-2}{\ell-2}}, ({\binom{n-1}{k-1}}+1)({\binom{n-1}{\ell-1}}-{\binom{n-k-1}{\ell-1}})\}$.

Case 2. $\ell=k$. Setting $n=2k$ and $\mathcal{A}=\mathcal{B}=\{A\in \binom{[2k]}{k}:1\in A\}\cup \{[2,k+1]\}\setminus \{1,k+2,k+3,...,2k\}$.
Thus $\mathcal{A}$ and $\mathcal{B}$ are non-trivial cross-intersecting  with $|\mathcal{A}||\mathcal{B}|=\binom{n-1}{k-1}^2$.
Similar to Case 1, it is easy to see that $|\mathcal{A}||\mathcal{B}|> \max \{ ({\binom{n-1}{k-1}}+{\binom{n-2 }{k-1}}){\binom{n-2}{\ell-2}}, ({\binom{n-1}{k-1}}+1)({\binom{n-1}{\ell-1}}-{\binom{n-k-1}{\ell-1}})\}$.
\medskip

The paper is organized as follows.
In Section 2, we introduce some tools needed in the proof.
In Section 3, we present some size-sensitive inequalities for cross-intersecting families.
In Section 4, we shall give the complete proof of our main result.

\section{Preliminaries}

%\subsection{Shadow}

Given a family $\mathcal{F}\subseteq {\binom{[n]}{k}}$ with $1\le k\le n$. For any positive integer $\ell$ with $\ell\le k$, the family $\Delta_{\ell}(\mathcal{F}):=\bigcup_{F\in\mathcal{F}}{\binom{F}{\ell}}$ is called
the {\em $\ell$-shadow} of $\cal{F}$. Suppose the family ${\binom{[n]}{k}}$ is equipped with an antilexicographic order $``<"$, i.e.,  $A<B$ in ${\binom{[n]}{k}}$ if and only if $\max (A \setminus B) \cup (B \setminus A) \in B$. Denote by $\mathcal{C}^{(k)}(m)$ the collection of the smallest $m$  elements of ${\binom{[n]}{k}}$. For example, $\mathcal{C}^{(3)}(5)=\left\{\left\{1,2,3\right\},\left\{1,2,4\right\},\left\{1,3,4\right\},
\left\{2,3,4\right\},\left\{1,2,5\right\}\right\}$.
Below is the celebrated Kruskal-Katona Theorem.

\begin{theo}[Kruskal \cite{Kru1963}, Katona \cite{Kat1966}]\label{kkl}
Let $n, k$  be positive integers with $n\ge k$. If $\mathcal{F}\subseteq {\binom{[n]}{k}}$ is a family of size $m$, then for any positive integer $\ell$ with $\ell <k$, we have
\[
|\Delta_{\ell}(\mathcal{F})|\ge |\Delta_{\ell}(\mathcal{C}^{(k)}(m)|.
\]
\end{theo}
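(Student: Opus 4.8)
The plan is to prove the (lower) shadow bound by induction on $n$, after first reducing to the case where $\mathcal{F}$ is stable under \emph{compressions}. For $1\le i<j\le n$, define the compression $C_{ij}$ that replaces each $A\in\mathcal{F}$ with $j\in A$, $i\notin A$ and $(A\setminus\{j\})\cup\{i\}\notin\mathcal{F}$ by $(A\setminus\{j\})\cup\{i\}$, and fixes every other set. Since $C_{ij}$ only exchanges a larger label for a smaller one, the potential $w(\mathcal{F})=\sum_{A\in\mathcal{F}}\sum_{a\in A}a$ strictly decreases whenever $C_{ij}$ changes $\mathcal{F}$, so finitely many compressions yield a \emph{compressed} family $\mathcal{F}^{*}$ (invariant under every $C_{ij}$) with $|\mathcal{F}^{*}|=|\mathcal{F}|=m$. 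The first step is to verify that compressions never increase the $\ell$-shadow, i.e. $|\Delta_{\ell}(C_{ij}\mathcal{F})|\le|\Delta_{\ell}(\mathcal{F})|$; I would establish the inclusion $\Delta_{\ell}(C_{ij}\mathcal{F})\subseteq C_{ij}(\Delta_{\ell}\mathcal{F})$ by a short local case analysis on an $\ell$-set and its witnessing supersets, and then use that $C_{ij}$ preserves cardinality. Consequently it suffices to prove $|\Delta_{\ell}(\mathcal{F}^{*})|\ge|\Delta_{\ell}(\mathcal{C}^{(k)}(m))|$ for compressed $\mathcal{F}^{*}$.

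For the inductive step, assuming $k<n$ (the case $k=n$ forces $m\le 1$ and is trivial), split $\mathcal{F}^{*}$ according to the top element $n$: let $\mathcal{F}_{0}=\{A\in\mathcal{F}^{*}:n\notin A\}\subseteq\binom{[n-1]}{k}$ and $\mathcal{F}_{1}=\{A\setminus\{n\}:A\in\mathcal{F}^{*},\,n\in A\}\subseteq\binom{[n-1]}{k-1}$, with $m_{0}=|\mathcal{F}_{0}|$, $m_{1}=|\mathcal{F}_{1}|$ and $m_{0}+m_{1}=m$. The crucial structural observation is that compressedness forces $\mathcal{F}_{1}\subseteq\Delta_{k-1}(\mathcal{F}_{0})$: given $A\ni n$ in $\mathcal{F}^{*}$, pick $i=\min([n-1]\setminus A)$, which exists because $k<n$; invariance under $C_{in}$ forces $(A\setminus\{n\})\cup\{i\}\in\mathcal{F}_{0}$, and this set contains $A\setminus\{n\}$. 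Separating the $\ell$-subsets of members of $\mathcal{F}^{*}$ into those avoiding and those containing $n$, and using $\Delta_{\ell}(\mathcal{F}_{1})\subseteq\Delta_{\ell}(\Delta_{k-1}(\mathcal{F}_{0}))=\Delta_{\ell}(\mathcal{F}_{0})$, this yields the exact identity
\[
|\Delta_{\ell}(\mathcal{F}^{*})|=|\Delta_{\ell}(\mathcal{F}_{0})|+|\Delta_{\ell-1}(\mathcal{F}_{1})|.
\]

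Applying the induction hypothesis on the ground set $[n-1]$ to each piece gives $|\Delta_{\ell}(\mathcal{F}_{0})|\ge|\Delta_{\ell}(\mathcal{C}^{(k)}(m_{0}))|$ and $|\Delta_{\ell-1}(\mathcal{F}_{1})|\ge|\Delta_{\ell-1}(\mathcal{C}^{(k-1)}(m_{1}))|$, where these initial segments are taken inside $[n-1]$. To finish I would compare with the corresponding decomposition of the extremal family itself: the antilex initial segment $\mathcal{C}^{(k)}(m)$ splits at $n$ so that its part avoiding $n$ is the \emph{largest} possible initial segment of $\binom{[n-1]}{k}$, i.e. it realizes the greedy split $m_{0}^{*}=\min\{m,\binom{n-1}{k}\}$, $m_{1}^{*}=m-m_{0}^{*}$. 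Two facts are then needed: that the $\ell$-shadow of an antilex initial segment is again an antilex initial segment (so the decomposition of $\Delta_{\ell}(\mathcal{C}^{(k)}(m))$ has exactly the predicted sizes and the induction is self-consistent), and that the map $(m_{0},m_{1})\mapsto|\Delta_{\ell}(\mathcal{C}^{(k)}(m_{0}))|+|\Delta_{\ell-1}(\mathcal{C}^{(k-1)}(m_{1}))|$ subject to $m_{0}+m_{1}=m$ is minimized at the greedy split.

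The main obstacle is precisely this last minimization. I expect to handle it through the $k$-cascade representation $m=\binom{a_{k}}{k}+\binom{a_{k-1}}{k-1}+\cdots$, which makes $|\Delta_{\ell}(\mathcal{C}^{(k)}(m))|$ explicit and converts the claim into an inequality comparing the marginal shadow increment of $\mathcal{C}^{(k)}(\cdot)$ at $m_{0}$ with that of $\mathcal{C}^{(k-1)}(\cdot)$ at $m_{1}$; an exchange argument then shows that moving one set from the $n$-part to the $n$-free part never increases the sum. The remaining ingredients — the shadow-of-initial-segment-is-initial-segment fact and the monotonicity of $m\mapsto|\Delta_{\ell}(\mathcal{C}^{(k)}(m))|$ — are elementary consequences of the same cascade bookkeeping together with the greedy description of $\mathcal{C}^{(k)}(m)$.
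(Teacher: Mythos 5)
Your overall skeleton is the standard compression route, and most of its steps are sound and well known: the shadow-compression lemma $\Delta_{\ell}(C_{ij}\mathcal{F})\subseteq C_{ij}(\Delta_{\ell}\mathcal{F})$, termination of compressions via the weight $w(\mathcal{F})$, the structural fact $\mathcal{F}_{1}\subseteq\Delta_{k-1}(\mathcal{F}_{0})$ for left-compressed families, and the resulting identity $|\Delta_{\ell}(\mathcal{F}^{*})|=|\Delta_{\ell}(\mathcal{F}_{0})|+|\Delta_{\ell-1}(\mathcal{F}_{1})|$ are all correct. (For calibration: the paper itself gives no proof of Theorem \ref{kkl} --- it quotes Kruskal--Katona as a classical black box --- so your attempt is judged on its own merits rather than against an in-paper argument.)

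However, your final step contains a genuine gap: the claim that $(m_{0},m_{1})\mapsto|\Delta_{\ell}(\mathcal{C}^{(k)}(m_{0}))|+|\Delta_{\ell-1}(\mathcal{C}^{(k-1)}(m_{1}))|$, subject only to $m_{0}+m_{1}=m$, is minimized at the greedy split is false, and the proposed exchange argument goes in the wrong direction. Take $k=3$, $\ell=2$, $m=4$, $n\ge 5$: the greedy split $(m_{0},m_{1})=(4,0)$ gives $|\Delta_{2}(\mathcal{C}^{(3)}(4))|=\bigl|\binom{[4]}{2}\bigr|=6$, whereas the split $(0,4)$ gives $0+|\Delta_{1}(\mathcal{C}^{(2)}(4))|=4<6$; moreover, moving one set from the $n$-part to the $n$-free part, $(0,4)\to(1,3)$, \emph{increases} the sum from $4$ to $3+3=6$, contradicting your exchange claim. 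The unconstrained minimum thus sits at small $m_{0}$, not at the greedy (maximal-$m_{0}$) split. What rescues the induction is precisely the constraint you derived and then discarded: $m_{1}\le|\Delta_{k-1}(\mathcal{F}_{0})|$ (in the counterexample $m_{0}=0$ forces $m_{1}=0$, so the bad split is infeasible). Since this bound is family-dependent, it cannot simply be fed into a two-variable optimization over initial segments; the classical completions instead run a case analysis on the size of $m_{1}$, using a \emph{second} lower bound $|\Delta_{\ell}(\mathcal{F}^{*})|\ge|\Delta_{\ell}(\mathcal{F}_{1})|+|\Delta_{\ell-1}(\mathcal{F}_{1})|$ (valid because $\Delta_{\ell}(\mathcal{F}_{1})$ consists of shadow sets avoiding $n$) when $m_{1}$ is at least the cascade-shifted value $\binom{a_{1}-1}{k-1}+\binom{a_{2}-1}{k-2}+\cdots$, combining the two induction hypotheses on $\mathcal{F}_{1}$ through Pascal's identity, and a separate compressedness argument in the complementary regime. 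This case analysis is the actual heart of Kruskal--Katona, and as written your plan replaces it with a false statement, so the proof does not go through without substantial repair.
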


\begin{theo}[Lov\'{a}sz \cite{Lov1979}]\label{Lov1979}
Let $n, k$  be positive integers with $n\ge k$. If $\mathcal{F}\subseteq {\binom{[n]}{k}}$ is a family of size $|\mathcal{F}|={\binom{x}{k}}:={\frac{x(x-1)\cdots (x-k+1)}{k!}}$ for some real number $x\ge k$, then for any positive integer $\ell$ with $\ell <k$, we have
\[
|\Delta_{\ell}(\mathcal{F})|\ge {\binom{x}{\ell}}.
\]
\end{theo}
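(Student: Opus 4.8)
The plan is to derive this from the Kruskal--Katona theorem (Theorem~\ref{kkl}) together with a one-variable estimate on the shadow of an initial segment, after first reducing to the single-step case $\ell=k-1$. Suppose we have established that special case for every $k$: whenever $|\mathcal{G}|=\binom{y}{k}$ with $y\ge k$ real, one has $|\Delta_{k-1}(\mathcal{G})|\ge\binom{y}{k-1}$. Given $\mathcal{F}$ with $|\mathcal{F}|=\binom{x}{k}$, this yields $|\Delta_{k-1}(\mathcal{F})|\ge\binom{x}{k-1}$; since $y\mapsto\binom{y}{k-1}$ is increasing on $[k-1,\infty)$, we may write $|\Delta_{k-1}(\mathcal{F})|=\binom{x_1}{k-1}$ with $x_1\ge x$. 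Applying the one-step bound to $\Delta_{k-1}(\mathcal{F})$ and using $\Delta_{k-2}(\Delta_{k-1}(\mathcal{F}))=\Delta_{k-2}(\mathcal{F})$ gives $|\Delta_{k-2}(\mathcal{F})|\ge\binom{x_1}{k-2}\ge\binom{x}{k-2}$. Iterating down to level $\ell$ produces $|\Delta_{\ell}(\mathcal{F})|\ge\binom{x}{\ell}$, so it suffices to treat $\ell=k-1$.

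For the one-step case, by Theorem~\ref{kkl} it is enough to bound the shadow of the initial segment $\mathcal{C}^{(k)}(m)$ with $m=\binom{x}{k}$. Writing $t=\lfloor x\rfloor$ and splitting $\mathcal{C}^{(k)}(m)$ according to whether a set contains the largest used element $t+1$, the sets avoiding $t+1$ are exactly $\binom{[t]}{k}$, whose $(k-1)$-shadow is all of $\binom{[t]}{k-1}$, while each new set $\{t+1\}\cup S$ contributes to the shadow only through $\{t+1\}\cup T$ with $T\in\Delta_{k-2}(S)$. This gives the exact recursion
\[
|\Delta_{k-1}(\mathcal{C}^{(k)}(m))| = \binom{t}{k-1} + |\Delta_{k-2}(\mathcal{C}^{(k-1)}(m'))|, \quad m' = m - \binom{t}{k}.
\]
I would now induct on $k$, the base case $k=2$ being the elementary fact that a graph with $\binom{x}{2}$ edges spans at least $x$ vertices. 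The inductive hypothesis applied to $\mathcal{C}^{(k-1)}(m')$ controls the second term by $\binom{x'}{k-2}$, where $\binom{x'}{k-1}=m'$, reducing everything to the real-variable inequality
\[
\binom{x}{k-1} \le \binom{t}{k-1} + \binom{x'}{k-2}, \quad \binom{x'}{k-1} = \binom{x}{k} - \binom{t}{k}.
\]

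The main obstacle is precisely this last inequality. It becomes an equality at the integer endpoint $x=t+1$ (the complete family), which anchors the induction and also explains why the global bound is tight exactly when $x$ is an integer. For $x$ strictly between consecutive integers the inequality should follow from the convexity and monotonicity of $u\mapsto\binom{u}{j}$ on $[j-1,\infty)$; turning this into a rigorous comparison between the genuine binomial coefficient $\binom{x}{k-1}$ and the shadow quantity on the right --- equivalently, verifying the bound term by term against the Kruskal--Katona cascade expansion of $m$ --- is the technical heart of the argument, while the reductions above are routine bookkeeping.
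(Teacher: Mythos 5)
The paper never proves this theorem: it is quoted verbatim as a known result (Problem 13.31 in Lov\'asz's book \cite{Lov1979}) and used as a black box, so your proposal has to stand on its own. Its skeleton is correct and standard. The identity $\Delta_{k-2}(\Delta_{k-1}(\mathcal{F}))=\Delta_{k-2}(\mathcal{F})$ does reduce the theorem to the single-step case $\ell=k-1$ (your bootstrap $|\Delta_{k-1}(\mathcal{F})|=\binom{x_1}{k-1}$ with $x_1\ge x$ is fine), Kruskal--Katona does reduce that case to the colexicographic initial segment, the recursion
\[
|\Delta_{k-1}(\mathcal{C}^{(k)}(m))|=\binom{t}{k-1}+|\Delta_{k-2}(\mathcal{C}^{(k-1)}(m'))|,\qquad t=\lfloor x\rfloor\ge k,\quad m'=m-\binom{t}{k},
\]
is an exact identity (every member of $\mathcal{C}^{(k)}(m)$ either lies in $\binom{[t]}{k}$, whose shadow is all of $\binom{[t]}{k-1}$, or contains $t+1$), and the $k=2$ base case is elementary, as you say.

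The genuine gap is the step you yourself flag: the inequality $\binom{x}{k-1}\le\binom{t}{k-1}+\binom{x'}{k-2}$ with $\binom{x'}{k-1}=\binom{x}{k}-\binom{t}{k}$ is not a routine convexity fact --- at this point it \emph{is} the theorem, and the justification you sketch cannot be completed as stated. Let $F(x)$ denote the right side minus the left side on $[t,t+1]$; you know $F(t+1)=0$ and $F>0$ near $x=t$, and you need $F\ge 0$ throughout, which would follow from $F$ being concave or decreasing, not convex: a convex function with endpoint values $1$ and $0$ can perfectly well be negative in the interior, so anchoring at the endpoint equality plus ``convexity and monotonicity of $u\mapsto\binom{u}{j}$'' proves nothing. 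Worse, no soft argument determines the convexity type of $F$: writing $x'=g^{-1}\bigl(\binom{x}{k}-\binom{t}{k}\bigr)$ with $g(u)=\binom{u}{k-1}$, the inner function is convex increasing while $g^{-1}$ is concave increasing, so the composite $\binom{x'}{k-2}$ has no definite convexity, and one genuinely has to carry out the quantitative cascade/derivative comparison you defer. (Numerical checks suggest the inequality is true, but that is the content of the theorem, not a proof; note also that you assert it for all real $x\in[t,t+1]$, which is stronger than the integral sizes actually needed.) If you want a complete argument, the classical solution sidesteps this comparison entirely: prove the case $\ell=k-1$ directly by induction on $|\mathcal{F}|$ and on $k$ after shifting, splitting $\mathcal{F}=\mathcal{F}_0\sqcup\mathcal{F}_1$ by containment of the element $1$; shiftedness gives $\Delta_{k-1}(\mathcal{F}_0)\subseteq\{F\setminus\{1\}:F\in\mathcal{F}_1\}$, so if $|\mathcal{F}_1|<\binom{x-1}{k-1}$ then $|\mathcal{F}_0|>\binom{x-1}{k}$ and the induction on size forces $|\mathcal{F}_1|\ge|\Delta_{k-1}(\mathcal{F}_0)|\ge\binom{x-1}{k-1}$, a contradiction; while if $|\mathcal{F}_1|\ge\binom{x-1}{k-1}$, then the shadow contains $\{F\setminus\{1\}:F\in\mathcal{F}_1\}$ together with $\{1\}\cup T$ for $T\in\Delta_{k-2}(\mathcal{F}_1/1)$, giving at least $\binom{x-1}{k-1}+\binom{x-1}{k-2}=\binom{x}{k-1}$ by the induction on $k$.
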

For any positive integers $m,k$ with $m\ge k$, $m$ can be uniquely written as the following form
\[
m={\binom{a_1}{k}}+\binom{a_2}{k-1}+\cdots +{\binom{a_t}{{k-t+1}}},
\]
for positive integers $a_1,a_2,\ldots,a_t$ with $a_1>a_2>\cdots >a_t> k-t$,
which is so-called $k$-{\em cascade} representation of $m$.
Let $k$ be a positive integer.

\begin{theo}[M\"ors, Theorem 7 in \cite{mors}]\label{mors1} With the assumptions as Theorem \ref{kkl}, let $m={\binom{a_1}{k}}+\binom{a_2}{k-1}+\dots+{\binom{a_t}{k-t+1}}$ be the $k$-cascade of $m$,
then the equality $|\Delta_{\ell}(\mathcal{F})|=|\Delta_{\ell}(\mathcal{C}^{(k)}(m)|$ in Theorem \ref{kkl} has a  unique solution $\mathcal{F}=\mathcal{C}^{(k)}(m)$ (up to isomorphism)
 if and only if
\[
\ell\ge t, \quad {\text or~} {\binom{a_1+1}{k}}-1=m,\quad {\text or~} m\le k+1.
\]
\end{theo}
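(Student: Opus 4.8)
The plan is to prove both directions by the compression method driven by an induction on $k$ that keeps $\ell$ as a live parameter. Write $f_\ell(m):=|\Delta_\ell(\mathcal{C}^{(k)}(m))|=\binom{a_1}{\ell}+\binom{a_2}{\ell-1}+\cdots+\binom{a_t}{\ell-t+1}$ for the Kruskal--Katona value (Theorem \ref{kkl}). The engine is the link decomposition at the largest element $n$: splitting $\mathcal{F}$ into its trace $\mathcal{F}(\bar n)=\{F\in\mathcal{F}:n\notin F\}\subseteq\binom{[n-1]}{k}$ and its link $\mathcal{F}(n)=\{F\setminus\{n\}:n\in F\in\mathcal{F}\}\subseteq\binom{[n-1]}{k-1}$, one has in general
\[
|\Delta_\ell(\mathcal{F})|=\bigl|\Delta_\ell(\mathcal{F}(\bar n))\cup\Delta_\ell(\mathcal{F}(n))\bigr|+|\Delta_{\ell-1}(\mathcal{F}(n))|,
\]
the two summands counting the $\ell$-sets of the shadow that avoid and that contain $n$. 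First I would reduce to a left-compressed $\mathcal{F}$ (compressions preserve $|\mathcal{F}|$ and, by Theorem \ref{kkl}, do not increase any shadow), and record the standard fact that for compressed $\mathcal{F}$ one has $\mathcal{F}(n)\subseteq\Delta_{k-1}(\mathcal{F}(\bar n))$, whence $\Delta_\ell(\mathcal{F}(n))\subseteq\Delta_\ell(\mathcal{F}(\bar n))$ and the recursion collapses to
\[
|\Delta_\ell(\mathcal{F})|=|\Delta_\ell(\mathcal{F}(\bar n))|+|\Delta_{\ell-1}(\mathcal{F}(n))|.
\]
For the colex segment this splits off the top cascade term, sending $(k,\ell,t)\mapsto(k-1,\ell-1,t-1)$; crucially the inequality $\ell\ge t$ is invariant under this step, which is what makes the three regimes of the theorem stable under the induction.

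For the sufficiency (``if'') direction I would induct on $k$. The value $f_\ell(m)$ is attained only if both summands are individually minimal, so by monotonicity and a convexity estimate for the cascade bound the split $|\mathcal{F}(\bar n)|+|\mathcal{F}(n)|=m$ must be the canonical one, $|\mathcal{F}(\bar n)|=\binom{a_1}{k}$ and $|\mathcal{F}(n)|=m-\binom{a_1}{k}$, with $\mathcal{F}(\bar n)$ attaining $\binom{a_1}{\ell}$ and $\mathcal{F}(n)$ attaining $f_{\ell-1}$ of its size. The first forces $\mathcal{F}(\bar n)$ to be a complete family on $a_1$ points, which is the equality case of the Lov\'asz bound (Theorem \ref{Lov1979}), and the induction hypothesis applied to $\mathcal{F}(n)$ with parameters $(k-1,\ell-1)$ identifies the link; one then checks the two pieces dovetail into $\mathcal{C}^{(k)}(m)$. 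The three conditions feed the base and closing cases: when $\ell\ge t$ every surviving cascade term $\binom{a_i}{\ell-i+1}$ is a genuine positive binomial, so the inductive equality analysis has no slack; the almost-complete case $m=\binom{a_1+1}{k}-1$ is closed by rigidity, since $\mathcal{C}^{(k)}(m)$ is $\binom{[a_1+1]}{k}$ minus one set and any equal-shadow family is pinned to the same $(a_1+1)$-support; and $m\le k+1$ is dispatched by a direct inspection of the few remaining small families.

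For the necessity (``only if'') direction I would show that if all three conditions fail, i.e.\ $\ell<t$, $m\le\binom{a_1+1}{k}-2$, and $m\ge k+2$, then a non-isomorphic extremal family exists. The key observation is that for $\ell<t$ the deepest cascade terms $\binom{a_i}{\ell-i+1}$ with $i>\ell$ degenerate to $0$ or $1$, so the bottom layers $\{a_1+1,\dots,a_{i-1}+1\}\star\binom{[a_i]}{k-i+1}$ of $\mathcal{C}^{(k)}(m)$ carry no new $\ell$-shadow; this is precisely the combinatorial slack that the condition $\ell\ge t$ was ruling out. The two inequalities $m\le\binom{a_1+1}{k}-2$ and $m\ge k+2$ guarantee that these bottom layers are neither forced to be (almost) complete nor trivially small, so I would exhibit an explicit local replacement of a bottom set of $\mathcal{C}^{(k)}(m)$ by a set on fresh ground-set elements that leaves $|\Delta_\ell|$ unchanged while altering the isomorphism type.

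The hardest part will be the equality analysis inside the inductive step of the sufficiency direction. Two points are delicate: first, proving that attaining $f_\ell(m)$ forces the canonical size split and then the exact structure of $\mathcal{F}(\bar n)$ and $\mathcal{F}(n)$, rather than merely their shadow sizes --- this is where the strict behaviour of $m\mapsto f_\ell(m)$ and the full-family uniqueness must be combined carefully, and where the boundary regimes (conditions 2 and 3) demand separate treatment because the generic ``$\ell\ge t$'' slackness argument does not apply. Second, since compression does not preserve isomorphism type, one cannot simply compress $\mathcal{F}$ to the initial segment and conclude; instead one must show that, under each of the three conditions, equality forces $\mathcal{F}$ to be already compressed (invariant under every $L_{ij}$) and thereby isomorphic to $\mathcal{C}^{(k)}(m)$. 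Dually, in the necessity direction the subtle step is certifying that the constructed family is genuinely not a relabeling of $\mathcal{C}^{(k)}(m)$, for which a support-size or degree-sequence invariant, valid exactly under $m\le\binom{a_1+1}{k}-2$ and $m\ge k+2$, provides the cleanest distinction.
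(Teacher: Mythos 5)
First, a point of order: the paper offers no proof of this statement at all --- it is imported verbatim as Theorem 7 of M\"ors \cite{mors} and used as a black box (in the proof of Proposition \ref{propextremal}), so there is no internal argument to compare yours with; your attempt has to stand on its own. Your architecture is the standard one for such results: left-compression, the trace/link recursion $|\Delta_\ell(\mathcal{F})|=|\Delta_\ell(\mathcal{F}(\bar n))|+|\Delta_{\ell-1}(\mathcal{F}(n))|$ for compressed $\mathcal{F}$ (your identity and the inclusion $\mathcal{F}(n)\subseteq\Delta_{k-1}(\mathcal{F}(\bar n))$ are correct), induction on $k$ with the invariance of $\ell\ge t$ under $(k,\ell,t)\mapsto(k-1,\ell-1,t-1)$, and, for necessity, the correct observation that for $i>\ell+1$ the cascade terms $\binom{a_i}{\ell-i+1}$ vanish, so the bottom layers of $\mathcal{C}^{(k)}(m)$ are shadow-free slack that can be rearranged; that part of your plan is sound in outline.

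There are, however, two genuine gaps in the sufficiency direction, and they sit exactly where the theorem's content lies. First, your claim that equality forces $\mathcal{F}$ to be \emph{already compressed} (invariant under every $L_{ij}$) is false: any nontrivial relabeling of $\mathcal{C}^{(k)}(m)$ attains equality and is not compressed, so no argument can establish what you propose to show. What must be proved instead is a transfer lemma along the compression chain: if $L_{ij}(\mathcal{F})\neq\mathcal{F}$ and $|\Delta_\ell(L_{ij}(\mathcal{F}))|=|\Delta_\ell(\mathcal{F})|$, then $\mathcal{F}$ is obtained from $L_{ij}(\mathcal{F})$ by the transposition $(i\,j)$ --- and it is precisely in this step, not merely in the base cases, that the hypotheses $\ell\ge t$, $m=\binom{a_1+1}{k}-1$, $m\le k+1$ get consumed; your proposal never formulates this lemma. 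Second, your assertion that attaining $f_\ell(m)$ forces the canonical split $|\mathcal{F}(\bar n)|=\binom{a_1}{k}$ ``by monotonicity and a convexity estimate'' cannot be right as stated: the function $b\mapsto f_\ell(b)+f_{\ell-1}(m-b)$ has a plateau of minimizers exactly when low cascade terms degenerate to $0$ or $1$ --- the same slack your necessity construction exploits --- so uniqueness of the minimizing split is \emph{equivalent} to the case analysis you are trying to avoid, and must be established as a quantitative lemma separately under each of the three conditions. A smaller but real gap of the same kind: you invoke ``the equality case of the Lov\'asz bound'' to pin $\mathcal{F}(\bar n)$ down as a complete family $\binom{X}{k}$, but Theorem \ref{Lov1979} as stated is only an inequality; its equality characterization is itself a uniqueness statement needing proof (or citation), not a free step. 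Until the transfer lemma and the split-uniqueness lemma are actually proved, the proposal is a plausible roadmap rather than a proof.
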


\begin{theo}[M\"ors, Theorem 1.6 in \cite{mors}]\label{mors2}
Let $k,\ell,n$ be positive integers with $n\ge k+\ell$. If two families $\mathcal{A}\subseteq {\binom{[n]}{k}}$ and $\mathcal{B}\subseteq {\binom{[n]}{\ell}}$ are cross-intersecting with $\cap_{F\in\mathcal{A}\cup\cal B}F= \emptyset$, then we have either
  \begin{eqnarray*}
|\mathcal{A}| \le {\binom{n-1}{k-1}}-{\binom {n-\ell-1}{k-1}} + 1  \quad
{\text or}\quad
|\mathcal{B}| \le {\binom{n-1}{\ell-1}}-{\binom{n-k-1}{\ell-1}} + 1.
\end{eqnarray*}
Moreover, both equalities only achieve simultaneously.
\end{theo}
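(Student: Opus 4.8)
The plan is to translate the cross-intersecting condition into a statement about shadows and then run everything through the Kruskal--Katona theorem (Theorem~\ref{kkl}, Theorem~\ref{Lov1979}) and its uniqueness refinement (Theorem~\ref{mors1}). Writing $\overline{\mathcal{A}}=\{[n]\setminus A:A\in\mathcal{A}\}\subseteq\binom{[n]}{n-k}$ and $\overline{\mathcal{B}}=\{[n]\setminus B:B\in\mathcal{B}\}\subseteq\binom{[n]}{n-\ell}$, the requirement $A\cap B\neq\emptyset$ for all $A\in\mathcal{A}$, $B\in\mathcal{B}$ says exactly that no $B\in\mathcal{B}$ lies inside any $\overline{A}$ and no $A\in\mathcal{A}$ lies inside any $\overline{B}$. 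Since $n\ge k+\ell$ gives $n-k\ge\ell$ and $n-\ell\ge k$, this is the shadow-avoidance $\mathcal{B}\cap\Delta_{\ell}(\overline{\mathcal{A}})=\emptyset$ and $\mathcal{A}\cap\Delta_{k}(\overline{\mathcal{B}})=\emptyset$, whence
\[
|\mathcal{A}|\le\binom{n}{k}-|\Delta_{k}(\overline{\mathcal{B}})|
\qquad\text{and}\qquad
|\mathcal{B}|\le\binom{n}{\ell}-|\Delta_{\ell}(\overline{\mathcal{A}})|.
\]
These two inequalities hold for \emph{every} cross-intersecting pair and are the engine of the argument: a lower bound on the shadow of one complement family is an upper bound on the size of the other family.

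I would then prove the dichotomy by contradiction, assuming both displayed inequalities of the statement fail, i.e. $|\mathcal{A}|\ge\binom{n-1}{k-1}-\binom{n-\ell-1}{k-1}+2$ and $|\mathcal{B}|\ge\binom{n-1}{\ell-1}-\binom{n-k-1}{\ell-1}+2$. Feeding these into the engine inequalities forces both $|\Delta_{k}(\overline{\mathcal{B}})|$ and $|\Delta_{\ell}(\overline{\mathcal{A}})|$ to be very close to the Kruskal--Katona minimum for their respective sizes. Here lies the genuine subtlety, which I want to flag honestly: the bare Kruskal--Katona bound is \emph{not} enough, because the trivial star pair $\mathcal{A}=\{A:x\in A\}$, $\mathcal{B}=\{B:x\in B\}$ has complement families that are initial segments $\binom{[n]\setminus\{x\}}{\cdot}$ realizing the minimum shadow, and it violates both inequalities. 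Thus the nontrivial hypothesis $\bigcap_{F\in\mathcal{A}\cup\mathcal{B}}F=\emptyset$ must be used to push the shadows \emph{strictly} above the Kruskal--Katona minimum, in Hilton--Milner fashion. The natural route is to reduce, via simultaneous $(i,j)$-compressions of $\mathcal{A}$ and $\mathcal{B}$ (which preserve cross-intersection and all sizes), to shifted families, where the near-extremal structures are explicit and the cascade computation is transparent.

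The main obstacle is exactly that compression can destroy nontriviality: a nontrivial pair can collapse under shifting onto a common point and become the trivial star configuration above. I would handle this by tracking the \emph{last} compression before which the pair is still nontrivial and after which a common element appears, and analyzing that critical step directly; alternatively, one argues a stability version of the shadow bound showing that a complement family whose shadow is within the tolerated deficit of the Kruskal--Katona minimum must be an initial segment, which in the cross-intersecting setting forces a common element for all of $\mathcal{A}\cup\mathcal{B}$ and hence the desired contradiction. Finally, the exact constants (the ``$+1$''s) come out of matching the $(n-\ell)$- and $(n-k)$-cascade representations of $|\mathcal{B}|$ and $|\mathcal{A}|$ to the thresholds, and the \emph{moreover} part follows from the uniqueness theorem (Theorem~\ref{mors1}): equality in one inequality forces the relevant complement family to be precisely the compressed family $\mathcal{C}$, which through the shadow-avoidance relation pins down the other family and forces equality there as well, so the two equalities can occur only simultaneously and identify the extremal pair as a near-star together with one extra set.
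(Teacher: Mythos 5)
You should first note that the paper contains no proof of Theorem~\ref{mors2} at all: it is imported verbatim from M\"ors \cite{mors} (Theorem~1.6 there), so your attempt cannot be matched against an internal argument and must stand on its own. Its skeleton is sound and is indeed the right one — complementation, the shadow-avoidance identities $\mathcal{A}\cap\Delta_{k}(\overline{\mathcal{B}})=\emptyset$ and $\mathcal{B}\cap\Delta_{\ell}(\overline{\mathcal{A}})=\emptyset$, and the cascade arithmetic (the computation that the initial segment of size $\binom{n-1}{\ell-1}-\binom{n-k-1}{\ell-1}$ in $\binom{[n]}{n-\ell}$ has $k$-shadow $\binom{n-1}{k}-1$ is exactly the one the paper performs inside the proof of Proposition~\ref{propextremal}). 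But there is a genuine gap at the step you yourself flag. Assuming both inequalities fail, the two engine bounds confine $|\Delta_{k}(\overline{\mathcal{B}})|$ (and symmetrically $|\Delta_{\ell}(\overline{\mathcal{A}})|$) only to a window of width on the order of $\binom{n-\ell-1}{k-1}$ above the Kruskal--Katona minimum — not to exact equality. Theorem~\ref{mors1} is an \emph{exact-equality} uniqueness statement and says nothing about families whose shadow exceeds the minimum by a positive deficit; moreover, ``shadow within the tolerated deficit of the minimum implies initial segment'' is simply false as stated (an initial segment with one colex-late set appended is not isomorphic to an initial segment, yet can sit well inside such a window). So the ``stability version of the shadow bound'' you invoke is not available from any result quoted in the paper; it is precisely the hard content of M\"ors' theorem, and your argument defers to it rather than proving it.

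The compression fallback does not repair this. You correctly observe that simultaneous $(i,j)$-compressions preserve cross-intersection but can create a common element; however, ``tracking the last compression before nontriviality is lost'' is a label for the difficulty, not an argument: at that critical step the original pair is nontrivial while its image is the trivial star pair, and you supply no mechanism transferring a size bound backwards across that step. Two smaller points in the same vein: the ``moreover'' clause, even granting the dichotomy, needs the hypotheses of Theorem~\ref{mors1} checked for the relevant cascade (here one takes the shadow to order $\ell'=k$ with cascade length $t=k$, so the condition $\ell'\ge t$ holds — as the paper implicitly does in Proposition~\ref{propextremal} — but this verification is absent from your sketch); and the threshold-matching cascade computations that produce the exact ``$+1$'' constants are asserted, not carried out. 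In short: correct strategy, honestly flagged obstructions, but the decisive stability/uniqueness step and the equality analysis remain unproved, so this is a plan for M\"ors' proof rather than a proof.
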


The lexicographic order ``$\prec$" on the set $\binom{[n]}{k}$ is defined to be: $F\prec G$ in $\binom{[n]}{k}$ if and only if $\min (F\setminus G)< \min (G\setminus F)$.
For $1\leq m\leq \binom{n}{k}$, let $\mathcal{L}(n, k, m)$ denote the set of the smallest $m$ elements of $\binom{[n]}{k}$  in the lexicographic order.
For example,
$\mathcal{L}(10,3,\binom{9}{2})=\{A\in\binom{[10]}{3}: 1\in A\}$.
The following lemma is a base tool in our proofs concerning cross-intersecting families.

\begin{lemma} [Hilton \cite{Hil1976}] \label{lem1}
Let $n$, $k$ and $\ell$ be two positive integers.
If two families $\mathcal{A} \subseteq {\binom{[n]}{k}}$
and $\mathcal{B} \subseteq {\binom{[n]}{\ell}}$ are  cross-intersecting,
then  $\mathcal{L}(n, k, |\mathcal{A}|)$
and $\mathcal{L}(n, \ell,  |\mathcal{B}|)$ are cross-intersecting as well.
\end{lemma}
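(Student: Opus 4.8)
The plan is to collapse the whole statement onto a single shadow inequality that is supplied by the Kruskal--Katona theorem (Theorem \ref{kkl}). Write $a=|\mathcal A|$ and $b=|\mathcal B|$. If $k+\ell>n$ then every $k$-set meets every $\ell$-set, so all families in sight are cross-intersecting and there is nothing to prove; hence I assume $\ell\le n-k$. The first step is the standard reformulation of cross-intersection through complements: for $\overline{\mathcal A}:=\{[n]\setminus A:A\in\mathcal A\}\subseteq\binom{[n]}{n-k}$ one has
\[
\mathcal A,\ \mathcal B \text{ cross-intersecting} \iff \mathcal B\cap\Delta_\ell(\overline{\mathcal A})=\emptyset ,
\]
since $A\cap B=\emptyset$ is the same as $B\subseteq[n]\setminus A$, and the $\ell$-sets lying inside some $[n]\setminus A$ are exactly the members of $\Delta_\ell(\overline{\mathcal A})$. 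In particular cross-intersection forces $b\le\binom{n}{\ell}-|\Delta_\ell(\overline{\mathcal A})|$.

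Next I would set up an order dictionary so that the Kruskal--Katona extremal configuration becomes visible. Let $\sigma$ be the reversal $i\mapsto n+1-i$ of the ground set and put $\phi(A)=\{n+1-x:x\notin A\}$ (that is, $\phi=\sigma\circ\overline{\,\cdot\,}$). A direct inspection of the definitions of $\prec$ and $<$ shows that $\phi$ is an order isomorphism from $(\binom{[n]}{k},\prec)$ onto $(\binom{[n]}{n-k},<)$, so $\phi$ carries the lexicographic initial segment $\mathcal L(n,k,a)$ onto the antilexicographic initial segment $\mathcal C^{(n-k)}(a)$. Since $\sigma$ commutes with taking shadows, applying $\sigma$ to the reformulation above yields
\[
\mathcal A,\ \mathcal B \text{ cross-intersecting} \iff \sigma(\mathcal B)\cap\Delta_\ell(\phi(\mathcal A))=\emptyset .
\]
Specializing to $\mathcal A=\mathcal L(n,k,a)$ and $\mathcal B=\mathcal L(n,\ell,b)$ turns the target of the lemma into the single disjointness statement $\sigma(\mathcal L(n,\ell,b))\cap\Delta_\ell(\mathcal C^{(n-k)}(a))=\emptyset$.

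I would then close the argument with two elementary order facts together with Theorem \ref{kkl}. First, the $\ell$-shadow of an antilex initial segment is again an antilex initial segment, so $\Delta_\ell(\mathcal C^{(n-k)}(a))=\mathcal C^{(\ell)}(c)$ with $c=|\Delta_\ell(\mathcal C^{(n-k)}(a))|$. Second, complementation reverses the antilex order, whence $\sigma(\mathcal L(n,\ell,b))=\overline{\mathcal C^{(n-\ell)}(b)}$ is exactly the set of the $b$ largest $\ell$-sets in antilex order. An initial segment of size $c$ and a final segment of size $b$ are disjoint precisely when $c+b\le\binom{n}{\ell}$, so the lex families are cross-intersecting if and only if $b\le\binom{n}{\ell}-|\Delta_\ell(\mathcal C^{(n-k)}(a))|$. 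But Theorem \ref{kkl} applied to $\phi(\mathcal A)$, a family of size $a$ in $\binom{[n]}{n-k}$, gives $|\Delta_\ell(\phi(\mathcal A))|\ge|\Delta_\ell(\mathcal C^{(n-k)}(a))|$, so the cross-intersection of $\mathcal A,\mathcal B$ already guarantees $b\le\binom{n}{\ell}-|\Delta_\ell(\mathcal C^{(n-k)}(a))|$, which is exactly the inequality just shown to be equivalent to the cross-intersection of $\mathcal L(n,k,a)$ and $\mathcal L(n,\ell,b)$. This would finish the argument.

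The genuine content, and the step I expect to need the most care, is not the Kruskal--Katona inequality itself but the passage from an \emph{inequality} to an \emph{equivalence}: for an arbitrary $\mathcal A$ the bound $b\le\binom{n}{\ell}-|\Delta_\ell(\overline{\mathcal A})|$ is only necessary for cross-intersection, whereas for the lexicographic families every object lines up into an initial or a final segment (via $\phi$, via ``the shadow of an initial segment is an initial segment'', and via ``complementation reverses antilex''), and there the size bound becomes sufficient as well. Checking these three order statements cleanly, in particular that $\phi$ is an order isomorphism and that antilex shadows remain initial, is the bookkeeping that must be handled with care. I would also remark that the classical route through the shifting operators $S_{ij}$, while it preserves both cross-intersection and cardinalities, only produces \emph{left-compressed} families, which need not be lexicographic initial segments (for instance $\{\{1,2\},\{1,3\},\{2,3\}\}$ when $k=2$, $n=4$); thus shifting alone does not deliver the lemma, and the shadow/Kruskal--Katona route above is the cleaner one.
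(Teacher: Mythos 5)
Your proof is correct. Note first that the paper itself gives no proof of Lemma \ref{lem1}: it is quoted as Hilton's result with a citation to an unpublished manuscript, so there is no in-paper argument to compare against. Your derivation is essentially the standard one found in the literature for Hilton's lemma, and each step checks out: the complement reformulation (cross-intersection of $\mathcal A,\mathcal B$ is exactly $\mathcal B\cap\Delta_\ell(\overline{\mathcal A})=\emptyset$ once $\ell\le n-k$, the case $k+\ell>n$ being vacuous); the map $\phi(A)=\{n+1-x:x\notin A\}$ is indeed an order isomorphism from $\bigl(\binom{[n]}{k},\prec\bigr)$ to $\bigl(\binom{[n]}{n-k},<\bigr)$, since $\phi(F)\triangle\phi(G)=\sigma(F\triangle G)$ and $\max\sigma(S)=\sigma(\min S)$, so the ``smallest disagreement in $F$'' becomes the ``largest disagreement in $\phi(F)$''; the reversal $\sigma$ commutes with shadows and carries lex initial segments of $\binom{[n]}{\ell}$ to antilex final segments; and the shadow of an antilex initial segment is again an antilex initial segment, so the whole lemma collapses to the size inequality $b\le\binom{n}{\ell}-\bigl|\Delta_\ell(\mathcal C^{(n-k)}(a))\bigr|$, which Theorem \ref{kkl} extracts from the cross-intersection of the original pair. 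Your closing diagnosis is also the right one: the only genuine content is that for lexicographic families the necessary size bound becomes sufficient, because everything lines up into an initial or a final segment, and your remark that shifting alone yields only left-compressed families (not lex segments) is accurate. Two tiny points of care: Theorem \ref{kkl} as stated requires $\ell<n-k$, so the boundary case $\ell=n-k$ should be noted separately (there $\Delta_{n-k}$ is the identity on $\binom{[n]}{n-k}$ and the inequality is trivial); and the three order facts you flag as bookkeeping, while standard, would need to be written out in a self-contained exposition. It is worth observing that your route is fully consonant with the machinery the paper does use elsewhere: the proofs of Propositions \ref{propfunction} and \ref{propextremal} run exactly this complement-plus-shadow scheme ($\mathcal A'\cap\Delta_k(\overline{\mathcal B'})=\emptyset$ together with Theorem \ref{Lov1979} or Theorem \ref{kkl}), so your argument makes explicit the mechanism the authors take for granted when citing Hilton.
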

\medskip

To end this section, we present some  preparations, which contain the core idea of the proofs.
\begin{result}\label{r1} Let $a,b,c,d$ be four positive numbers. Then $(c-a)(d+b)\leq cd$ if and only if $\frac{c}{d+b}\leq \frac{a}{b}$.\end{result}

We say two cross-intersecting families $\mathcal A\subseteq \binom{[n]}{k}$ and $\mathcal B\subseteq \binom{[n]}{\ell}$ are {\em maximal} if adding any element of ${\binom{[n]}{k}}\setminus \mathcal{A}$ to $\mathcal{A}$
or  any element of
${\binom{[n]}{\ell}}\setminus \mathcal{B}$  to $\mathcal{B}$, the resulting two families are not cross-intersecting.
Let $\mathcal A\subseteq \binom{[n]}{k}$ and $\mathcal B\subseteq \binom{[n]}{\ell}$ be two given maximal cross-intersecting families.
For any $\mathcal X\subseteq \binom{[n]}{k}\setminus\mathcal A$, set $N_{\mathcal B}(\mathcal X)=\{B\in\mathcal B: \mbox{$B\cap A = \emptyset$ for some $A\in \mathcal X$}\}$. If there is no confusion, we abbreviate $N_{\mathcal B}(\mathcal X)$ as $N(\mathcal X)$. It is easy to find that
$\mathcal A\cup\mathcal X$ and $\mathcal B\setminus N(\mathcal X)$ are two maximal cross-intersecting families as well, and by Result \ref{r1},
\[\mbox{$|\mathcal A\cup\mathcal X||\mathcal B\setminus N_{\mathcal B}(\mathcal X)|<|\mathcal{A}||\mathcal{B}|\Longleftrightarrow
 {\frac{|\mathcal{B}|}{|\mathcal{A}|+|\mathcal{X}|}} < {\frac{|N(\mathcal{X})|}{|\mathcal{X}|}}.$}\]
Now we introduce a result to determine the lower bound of the ratio $\frac{|N(\mathcal{X})|}{|\mathcal{X}|}$. A bipartite graph $G(X,Y)$ is called {\em regular} if all vertices  in the same part are of the same degree.
The following result is well-known.
\begin{result}\label{factbipartite}
Let $G=(X,Y)$ be a regular bipartite graph.
Then for any nonempty subset $A$ of $X$,  we have ${\frac{|N(A)|}{|A|}}\ge {\frac{|Y|}{|X|}}$.
\end{result}
%\begin{proof} Suppose $d(u)=d_1$ for $u\in X$ and $d(v)=d_2$ for $v\in Y$.
%Assume that $d_1\neq 0$. For any nonempty subset $A$ of $X$, it is easily seen that $|\{(u,v): u\in A \mbox{ and $v\in N(u)$}\}|=\sum_{u\in A}d(u)\leq \sum_{v\in N(A)}d(v)$,
% therefore $d_1 |A| \le d_2 |N(A)|$, so we have ${\frac{|N(A)|}{|A|}}\ge {\frac{d_1}{d_2}}$.
%On the other hand, $|E|=|X|d_1= |Y|d_2$, and hence  ${\frac{d_1}{d_2}}={\frac{|\mathcal{V}|}{|\mathcal{U}|}}$.
%So we obtain ${\frac{|N(A)|}{|A|}}\ge {\frac{|Y|}{|X|}}$.
%\qed
%\end{proof}

Corresponding to a pair of families $\mathcal F$ and $\mathcal G$, it is so called Kneser graph $KG(\mathcal F, \mathcal G)$ with partite sets $\mathcal F$ and $\mathcal G$, and edge set
$\{\{F, G\} : F\cap G= \emptyset, F\in \mathcal F, G \in \mathcal G\}$.
For $\mathcal Y\subseteq \mathcal G$ and $\mathcal X\subseteq \mathcal F$, denote by $N_{\mathcal F}(\mathcal Y)=N_{KG(\mathcal F, \mathcal G)}(\mathcal Y)$ the neighbour of $\cal{Y}$ in $KG(\mathcal F, \mathcal G)$, and $N_{\mathcal G}(\mathcal X)=N_{KG(\mathcal F, \mathcal G)}(\mathcal X)$.

For integers $r,s,u,n$ with $r,s\le u<n$,
set
\begin{eqnarray*}
\mathcal{F}^{n,u,r,s}= \left\{F \in {\binom{[n]}{u}}: F\cap [r]\neq \emptyset  \:\: {\rm or} \:\: [r+1,r+s] \subseteq F \right\}\end{eqnarray*}
and
\begin{eqnarray*}
\mathcal{F}_{n,u,r,s}=\left\{F\in {\binom{[n]}{u}}: [r]\subseteq F {\rm \:\:  and \:\: } F\cap [r+1,r+s] \neq \emptyset \right\}.
\end{eqnarray*}
We notice that $\mathcal{F}^{n,u,r,s}$ and $\mathcal{F}_{n,u,r,s}$, which satisfy the following properties, play a crucial role in our proofs.
\begin{itemize}
\item[(P1)]$\mathcal F^{n,u,r,s}\subseteq \mathcal F^{n,u,r,s-1}$, $\mathcal F_{n,u,r,s-1}\subseteq\mathcal F_{n,u,r,s}$;
\item[(P2)]$|\mathcal{F}^{n,u,r,s}|=\sum_{1\leq i\leq r }\binom{n-i}{u-1}+\binom{n-r-s}{u-s}  \mbox{\ \ and\ \ }
 |\mathcal{F}_{n,u,r,s}|=\binom{n-r}{u-r}-\binom{n-r-s}{u-r}.$;
\item[(P3)] $\mathcal{F}^{n,u,r,s}$ and $\mathcal F_{n,u,r,s}$ ($\mathcal{F}_{n,u,r,s}$ and $\mathcal{F}^{n,u,r,s}$) are two maximal cross-intersecting families;
 \item[(P4)]$KG(\mathcal F_{n,u,r,s}\backslash\mathcal F_{n,u,r,s-1},\mathcal F^{n,u,r,s-1}\backslash\mathcal F^{n,u,r,s})$ is regular for $s\geq 2$.
\end{itemize}

Suppose $\mathcal F^{n,u,r,s}\subseteq\mathcal B\subseteq \mathcal F^{n,u,r,s-1}$.
Then
  \begin{eqnarray}\label{baseine}
  |\mathcal A||\mathcal B|<|\mathcal{F}_{n,u,r,s}||\mathcal F^{n,u,r,s}|\Leftrightarrow\frac{|\mathcal{F}_{n,u,r,s}|}{|\mathcal B|} < \frac{|\mathcal{F}_{n,u,r,s}\backslash\mathcal A|}{|\mathcal B\backslash\mathcal F^{n,u,r,s}|}=\frac{|N_{\mathcal{F}_{n,u,r,s}}(\mathcal B\backslash\mathcal F^{n,u,r,s})|}{|\mathcal B\backslash\mathcal F^{n,u,r,s}|}=\frac{|\mathcal{F}_{n,u,r,s}|-|\mathcal A|}{|\mathcal B|-|\mathcal F^{n,u,r,s}|}.
  \end{eqnarray}
To up bound the ratio $\frac{|\mathcal{F}_{n,u,r,s}|-|\mathcal A|}{|\mathcal B|-|\mathcal F^{n,u,r,s}|}$, we divide $\mathcal{F}_{n,u,r,s}\backslash\mathcal{F}_{n,u,r,s-1}$ and
 $\mathcal F^{n,u,r,s-1}\backslash\mathcal F^{n,u,r,s}$ into some parts as following: For $1\leq i\leq k-r$, $1\le j\le n$ define
\[\mathcal{F}_{n,k,r,s,i}=\left\{A\in\binom{[n]}{k}: \mbox{$ [r]\subseteq A$  and $A\cap[r+1,r+s+i]=[r+s,r+s+i-1]$}\right\}\]
and
\[\mathcal F^{n,\ell,r,s-1,j}=\left\{B\in\binom{[n]}{\ell}: \mbox{$B\cap [r]=\emptyset$ and $[r+1,r+s+j]\cap B=[r+1,r+s-1]\cup\{r+s+j\}$}\right\}.\]
We notice that $\mathcal{F}_{n,k,r,s,i}$ and $\mathcal F^{n,\ell,r,s-1,j}$ satisfy the following properties.
\begin{itemize}
\item[(P1$'$)]$\mathcal{F}_{n,u,r,s}\backslash\mathcal{F}_{n,u,r,s-1}=\cup_{i=1}^{k-r} \mathcal{F}_{n,k,r,s,i}$ and $\mathcal F^{n,u,r,s-1}\backslash\mathcal F^{n,u,r,s}=\cup_{j=1}^{n} \mathcal{F}^{n,k,r,s-1,i}$;
\item[(P2$'$)]$|\mathcal{F}_{n,k,r,s,i}|=\binom{n-r-s-i}{k-r-i}$ and $|\mathcal F^{n,\ell,r,s-1,i}|=\binom{n-r-s-i}{\ell-s}$;
\item[(P3$'$)]$N_{\mathcal{F}_{n,u,r,s}}(\mathcal F^{n,\ell,r,s-1,1}\cup\cdots\cup\mathcal F^{n,\ell,r,s-1,t})=\mathcal{F}_{n,k,r,s,1}\cup\cdots\cup\mathcal{F}_{n,k,r,s,t}$ for all $1\leq t\leq k$;
\item[(P4$'$)] $KG(\mathcal{F}_{n,k,r,s,t},\mathcal F^{n,\ell,r,s-1,t})$ is regular. %but $KG(\mathcal{F}_{n,k,r,s,1}\cup\cdots\cup\mathcal{F}_{n,k,r,s,t}, \mathcal F^{n,\ell,r,s-1,1}\cup\cdots\cup\mathcal F^{n,\ell,r,s-1,t})$ is not regular for $t\geq 2$.
\end{itemize}

\medskip
To prove our main results in details in the next two sections, we need two more preparations. Firstly,
the following two facts will be frequently used to scale the inequalities in our proofs without any extra explanation.
\begin{itemize}
\item[(F1)]If $a>b>0$, then $\frac b a < \frac {b+1}{a+1}$, that is, $b(a+1)<(b+1)a$;
\item[(F2)]Let $a,b,c,d,x$ be positive numbers. If $\frac b a < x$ and $\frac d c < x$ then $\frac {b+d} {a+c} < x$.
\end{itemize}
Secondly, almost each proof will be started with the following conditions.
\begin{itemize}
\item[(C1)] $\mathcal{A} \subseteq {\binom{[n]}{k}}$
and $\mathcal{B} \subseteq {\binom{[n]}{\ell}}$ are two  maximal cross-intersecting families;
\item[(C2)]
If $|\mathcal{A}|>{\binom{n-1}{k-1}}$ or $|\mathcal{B}|>{\binom{n-1}{\ell-1}}$, then we assume that
$\mathcal A=\mathcal L(n,k,|\mathcal A|)$ and $\mathcal B=\mathcal L(n,\ell,|\mathcal B|)$,
that is, $\mathcal{A}$ and $\mathcal{B}$ are the collections of the first $|\mathcal{A}|$ and $|\mathcal{B}|$ sets
in ${\binom{[n]}{k}}$ and ${\binom{[n]}{\ell}}$ with respect to the lexicographical order, respectively.
\end{itemize}

\section{Some size-sensitive results}
This section will be devoted to some sensitive-size results for two cross-intersecting families, which play an important role in the proof of Theorem \ref{maintheo} and extend Frankl and Kupavskii's size-sensitive theorem \cite{FK2017}.
The method we used is the techniques  with relevant bipartite graphs.

\begin{proposition} \label{prop1}
Let $n, k, \ell$  be positive integers with $n\ge  2\ell \ge 2k>0$.
If two families $\mathcal{A}\subseteq {\binom{[n]}{k}}$ and $\mathcal{B}\subseteq {\binom{[n]}{\ell}}$ are cross-intersecting with
 $ {\binom{n-1}{k-1}}+{\binom{n-s-1}{k-s}} \le |\mathcal{A}| \le {\binom{n-1}{k-1}}+{\binom{n-3}{k-2}}$ for some $3\le s \le k$, then
\begin{equation*}\label{equa}
 |\mathcal{A}||\mathcal{B}|\leq  \left({\binom{n-1}{k-1}}+{\binom{n-s-1}{k-s}}\right) \left({\binom{n-1}{\ell-1}}-{\binom{n-s-1}{\ell-1}}\right).
 \end{equation*}
 \end{proposition}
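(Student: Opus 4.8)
The plan is to recognise the extremal configuration and then run the regular--bipartite--graph machinery of Section~2 with base families $\mathcal{F}^{n,k,1,s}$ (for $\mathcal A$) and $\mathcal{F}_{n,\ell,1,s}$ (for $\mathcal B$). By (P2) with $r=1$ one has $|\mathcal{F}^{n,k,1,s}|=\binom{n-1}{k-1}+\binom{n-s-1}{k-s}$ and $|\mathcal{F}_{n,\ell,1,s}|=\binom{n-1}{\ell-1}-\binom{n-s-1}{\ell-1}$, so the claimed bound is precisely $|\mathcal{F}^{n,k,1,s}|\,|\mathcal{F}_{n,\ell,1,s}|$, and by (P3) these two families form a maximal cross-intersecting pair attaining it. Since $|\mathcal A|\ge\binom{n-1}{k-1}+\binom{n-s-1}{k-s}>\binom{n-1}{k-1}$, condition (C2) (via Lemma~\ref{lem1}) lets me assume $\mathcal A=\mathcal L(n,k,|\mathcal A|)$ and $\mathcal B=\mathcal L(n,\ell,|\mathcal B|)$; enlarging $\mathcal B$ to the maximal partner of $\mathcal A$ only increases the product and leaves $|\mathcal A|$ in the prescribed range, so I may assume $\mathcal B$ is that partner. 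As $\mathcal{F}^{n,k,1,s}$ and $\mathcal{F}^{n,k,1,2}$ are themselves initial segments in the lexicographic order, the size hypothesis forces $\mathcal{F}^{n,k,1,s}\subseteq\mathcal A\subseteq\mathcal{F}^{n,k,1,2}$, whence $\mathcal{F}_{n,\ell,1,2}\subseteq\mathcal B\subseteq\mathcal{F}_{n,\ell,1,s}$.

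Writing $\mathcal X=\mathcal A\setminus\mathcal{F}^{n,k,1,s}$, maximality yields $\mathcal B=\mathcal{F}_{n,\ell,1,s}\setminus N(\mathcal X)$, so that $|\mathcal A|=|\mathcal{F}^{n,k,1,s}|+|\mathcal X|$ and $|\mathcal B|=|\mathcal{F}_{n,\ell,1,s}|-|N(\mathcal X)|$. By Result~\ref{r1} the target inequality is then equivalent to the neighbourhood estimate
\[
\frac{|\mathcal{F}_{n,\ell,1,s}|}{|\mathcal A|}\ \le\ \frac{|N(\mathcal X)|}{|\mathcal X|},
\]
so everything reduces to bounding $|N(\mathcal X)|/|\mathcal X|$ from below. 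I would do this shell by shell: for fixed $q$ with $3\le q\le s$ treat the range $\mathcal{F}^{n,k,1,q}\subseteq\mathcal A\subseteq\mathcal{F}^{n,k,1,q-1}$ and prove the sharper bound $|\mathcal A||\mathcal B|\le|\mathcal{F}^{n,k,1,q}|\,|\mathcal{F}_{n,\ell,1,q}|$ (the same equivalence with $q$ in place of $s$), and then check separately that the corner products $|\mathcal{F}^{n,k,1,q}|\,|\mathcal{F}_{n,\ell,1,q}|$ are nondecreasing in $q$, so that the value at $q=s$ dominates the whole range.

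Inside a single shell I would use the layer decomposition (P1$'$)--(P4$'$). Since $\mathcal X$ is a lexicographic initial segment of $\mathcal{F}^{n,k,1,q-1}\setminus\mathcal{F}^{n,k,1,q}=\bigcup_j\mathcal{F}^{n,k,1,q-1,j}$, it fills these $k$-layers in order, and by (P3$'$) its neighbourhood in $\mathcal{F}_{n,\ell,1,q}$ is the union of the matched $\ell$-layers, with the single partially-filled layer controlled by Result~\ref{factbipartite} applied to the regular graph $KG(\mathcal{F}^{n,k,1,q-1,j},\mathcal{F}_{n,\ell,1,q,j})$ of (P4$'$). This produces per-layer ratios $|N|/|\cdot|\ge|\mathcal{F}_{n,\ell,1,q,j}|/|\mathcal{F}^{n,k,1,q-1,j}|$, which (F2) then aggregates into a lower bound for $|N(\mathcal X)|/|\mathcal X|$.

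The crux, and the step I expect to be hardest, is the final numerical comparison: I must show that the relevant layer ratio (a quotient of binomial coefficients read off from (P2$'$)) dominates the global target $|\mathcal{F}_{n,\ell,1,s}|/|\mathcal A|$ in its tightest instance, namely when $\mathcal X$ is small so that $|\mathcal A|$ is close to $|\mathcal{F}^{n,k,1,q}|$ and the target is largest. It is exactly here that $n\ge 2\ell\ge 2k$ is needed: it keeps every $\ell$-side layer large enough relative to its matched $k$-side layer for the inequality to survive, and it likewise drives the cross-shell monotonicity of the corner products. Locating the worst layer, handling the index bookkeeping between the $k$- and $\ell$-layers, and discharging these binomial inequalities through (F1) and (F2) is where the real work lies.
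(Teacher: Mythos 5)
Your skeleton is essentially the paper's proof: compress via (C1)/(C2) and Lemma \ref{lem1}, sandwich $\mathcal{F}^{n,k,1,s}\subseteq\mathcal{A}\subseteq\mathcal{F}^{n,k,1,2}$, argue shell by shell between consecutive corners, reduce via Result \ref{r1} to a neighbourhood-ratio inequality, and feed Result \ref{factbipartite} into it. (Your ``separate'' check that the corner products are nondecreasing in $q$ is in fact the same per-shell ratio inequality read again through Result \ref{r1}, so it comes for free once the ratio inequality is proved for every $3\le q\le s$.) One structural correction, though: the layer decomposition (P1$'$)--(P4$'$) is an unnecessary detour here. The whole shell-to-shell bipartite graph $KG\bigl(\mathcal{F}^{n,k,1,q-1}\setminus\mathcal{F}^{n,k,1,q},\ \mathcal{F}_{n,\ell,1,q}\setminus\mathcal{F}_{n,\ell,1,q-1}\bigr)$ is already regular --- both shells have a fixed trace on $[q+1]$ and a free uniform part in $[q+2,n]$ of sizes $k-q+1$ and $\ell-2$ respectively --- so a single application of Result \ref{factbipartite} gives $|N(\mathcal{X})|/|\mathcal{X}|\ge\binom{n-q-1}{\ell-2}/\binom{n-q-1}{k-q+1}$ for \emph{every} nonempty $\mathcal{X}$ in the shell, initial segment or not, with no worst-layer analysis. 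Your layer route is not wrong in principle, but it forces you to prove swapped-role analogues of (P3$'$)/(P4$'$) (the paper states them only for the configuration of Theorem \ref{theo3}, where $\mathcal{B}$, not $\mathcal{A}$, is the union-type family) and it weakens the aggregation problem for no gain.

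The genuine gap is that you stop exactly where the proof begins. Everything reduces to the numerical inequality
\[
\frac{|\mathcal{F}_{n,\ell,1,q}|}{|\mathcal{F}^{n,k,1,q}|}\;<\;\frac{\binom{n-q-1}{\ell-2}}{\binom{n-q-1}{k-q+1}},\qquad 3\le q\le s,
\]
and you defer it as ``where the real work lies.'' This is not a routine discharge via (F1)/(F2): it is the entire quantitative content of the proposition, and it is delicate precisely at the boundary $n=2\ell$, $q=3$. The paper devotes most of its proof to it, splitting $q\ge4$ (a telescoping comparison resting on three elementary facts such as $(n-k)(k-2)\le(n-\ell)(\ell-1)$) from the critical case $q=3$, where one writes $|\mathcal{F}_{n,\ell,1,4}|=x\binom{n-3}{\ell-2}$ with $x=\frac{n-2}{n-\ell}+1+\frac{n-\ell-1}{n-3}\le 3.5$ (and $\le 3$ for $\ell\in\{2,3\}$), shows $f(k)=(n-k)(n-k-1)(k-1)$ is increasing in $k$ to reduce to $k=\ell$, shows the resulting $g(n)$ is decreasing in $n$ to reduce to $n=2\ell$, and closes with $\frac{x\ell}{2(2\ell-1)}\le1$. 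Your heuristic that $n\ge 2\ell\ge 2k$ ``keeps every $\ell$-side layer large enough'' cannot substitute for this: the inequality is nearly tight at $n=2\ell$ (and the Example after Theorem \ref{maintheo} shows the conclusion genuinely breaks just below this range), so any correct argument must use the hypothesis quantitatively, exactly as the paper's $q=3$ computation does. As written, your proposal is a correct plan with the decisive estimates missing.
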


 \begin{proposition} \label{prop2}
Let $n, k, \ell$  be positive integers with $n\ge  2\ell > 2k>0$.
If two families $\mathcal{A}\subseteq {\binom{[n]}{k}}$ and $\mathcal{B}\subseteq {\binom{[n]}{\ell}}$ are cross-intersecting with
 $ {\binom{n-1}{k-1}}+{\binom{n-3}{k-2}} \le |\mathcal{A}|\le {\binom{n-1}{k-1}}+\sum_{i=3}^4{\binom{n-i}{k-2}}$, then
\begin{equation*}
 |\mathcal{A}||\mathcal{B}|\leq  \left({\binom{n-1}{k-1}}+{\binom{n-3}{k-2}}\right)\left(\binom{n-1}{\ell-1}-\binom{n-3}{\ell-1}\right).
 \end{equation*}
 \end{proposition}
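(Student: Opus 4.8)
\textbf{Proof proposal for Proposition~\ref{prop2}.}
The plan is to follow the same machinery as in Proposition~\ref{prop1}, specializing the families $\mathcal F^{n,u,r,s}$ and $\mathcal F_{n,u,r,s}$ to the parameters $r=1$, $s=3$, and to exploit the ratio inequality~\eqref{baseine} together with the regularity of the Kneser subgraphs. First I would invoke conditions (C1) and (C2): by Lemma~\ref{lem1} I may assume $\mathcal A$ and $\mathcal B$ are initial segments in the lexicographic order, and by maximality I may take $\mathcal B=N_{\binom{[n]}{\ell}}$-complement of $\mathcal A$, so that the two families are determined by $\mathcal A$ alone. The hypothesis ${\binom{n-1}{k-1}}+{\binom{n-3}{k-2}}\le|\mathcal A|\le{\binom{n-1}{k-1}}+\sum_{i=3}^4{\binom{n-i}{k-2}}$ is exactly the statement, via property (P2), that $\mathcal A$ lies between the maximal family $\mathcal F_{n,k,1,3}$ (of size ${\binom{n-1}{k-1}}+{\binom{n-3}{k-2}}$, writing $\binom{n-1}{k-1}-\binom{n-4}{k-1}=\binom{n-2}{k-2}+\binom{n-3}{k-2}+\binom{n-4}{k-2}$ — here I would track the cascade carefully) and the next threshold $\mathcal F_{n,k,1,2}$. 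Correspondingly $\mathcal F^{n,\ell,1,3}\subseteq\mathcal B\subseteq\mathcal F^{n,\ell,1,2}$, the regime in which~\eqref{baseine} applies with $(r,s)=(1,3)$.

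The heart of the argument is then to bound the ratio $\dfrac{|\mathcal F_{n,1,3}|-|\mathcal A|}{|\mathcal B|-|\mathcal F^{n,\ell,1,3}|}$ from above. Using the partition in (P1$'$)–(P4$'$), I would write the ``excess'' of $\mathcal A$ below $\mathcal F_{n,k,1,3}$ and the ``excess'' of $\mathcal B$ above $\mathcal F^{n,\ell,1,3}$ as unions of the blocks $\mathcal F_{n,k,1,3,i}$ and $\mathcal F^{n,\ell,1,2,j}$. Property (P4$'$) gives that each $KG(\mathcal F_{n,k,1,3,t},\mathcal F^{n,\ell,1,2,t})$ is regular, so Result~\ref{factbipartite} furnishes, for each block, the bound $\dfrac{|N(\cdot)|}{|\cdot|}\ge\dfrac{|\mathcal F_{n,k,1,3,t}|}{|\mathcal F^{n,\ell,1,2,t}|}=\dfrac{\binom{n-4-t}{k-1-t}}{\binom{n-4-t}{\ell-3}}$ by (P2$'$). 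The combination lemma (F2) then lets me pass from the per-block ratios to the aggregate ratio $\dfrac{|\mathcal F_{n,k,1,3}|-|\mathcal A|}{|\mathcal B|-|\mathcal F^{n,\ell,1,3}|}$, and Result~\ref{r1} converts the desired product inequality into the clean comparison $\dfrac{|\mathcal F_{n,k,1,3}|}{|\mathcal B|}\le\dfrac{|\mathcal F_{n,k,1,3}|-|\mathcal A|}{|\mathcal B|-|\mathcal F^{n,\ell,1,3}|}$, which is what~\eqref{baseine} asks for.

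The step I expect to be the genuine obstacle is verifying the numerical inequality $\dfrac{|\mathcal F_{n,k,1,3}|}{|\mathcal B|}\le\dfrac{\binom{n-4-t}{k-1-t}}{\binom{n-4-t}{\ell-3}}$ uniformly over the relevant blocks $t$; equivalently, checking that the smallest per-block ratio (via (F2), it is the block-ratios that must each dominate $|\mathcal F_{n,k,1,3}|/|\mathcal B|$) still exceeds the global ratio $|\mathcal F_{n,k,1,3}|/|\mathcal B|$. This is where the constraint $n\ge 2\ell>2k$ and the precise upper limit $|\mathcal A|\le\binom{n-1}{k-1}+\sum_{i=3}^4\binom{n-i}{k-2}$ on how far $\mathcal B$ may rise are used: I would bound $|\mathcal B|$ above by $|\mathcal F^{n,\ell,1,2}|$ and reduce to a ratio of binomial coefficients that is monotone in $n$, then confirm the inequality at the boundary $n=2\ell$ (or the worst admissible $n$) by a direct cascade/binomial estimate, arguing that $\ell>k$ gives the needed slack. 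Finally, since the chain of inequalities is strict unless $|\mathcal A|=|\mathcal F_{n,k,1,3}|$ and $|\mathcal B|=|\mathcal F^{n,\ell,1,3}|$, the bound is attained exactly at $\mathcal A=\mathcal F_{n,k,1,3}$, $\mathcal B=\mathcal F^{n,\ell,1,3}$, whose sizes give the stated product $\left(\binom{n-1}{k-1}+\binom{n-3}{k-2}\right)\left(\binom{n-1}{\ell-1}-\binom{n-3}{\ell-1}\right)$.
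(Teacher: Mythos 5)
Your proposal has the right skeleton, and it is indeed the paper's strategy: reduce to lexicographic initial segments via (C1)/(C2), sandwich $\mathcal A$ and $\mathcal B$ between two consecutive maximal cross-intersecting pairs, use regularity of the Kneser graph between the two symmetric differences together with Result~\ref{factbipartite}, and reduce everything to one ratio inequality via (\ref{baseine}). But your dictionary into the $\mathcal F$-notation is wrong in a way that matters. Since $|\mathcal A|\ge\binom{n-1}{k-1}+\binom{n-3}{k-2}>\binom{n-1}{k-1}$, the $k$-family $\mathcal A$ must be bracketed by \emph{superscript} families and $\mathcal B$ by subscript ones (the opposite of your assignment), and the correct parameters are $(r,s)=(1,2)$, not $(1,3)$: indeed $\binom{n-1}{k-1}+\binom{n-3}{k-2}=|\mathcal F^{n,k,1,2}|$, whereas the family you name, $\mathcal F_{n,k,1,3}$, has size $\binom{n-1}{k-1}-\binom{n-4}{k-1}<\binom{n-1}{k-1}$ --- the cascade mismatch you flagged (``track the cascade carefully'') is real and signals exactly this swap. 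Consequently your closing sentence misidentifies the extremal configuration: $|\mathcal F_{n,k,1,3}||\mathcal F^{n,\ell,1,3}|=\left(\binom{n-1}{k-1}-\binom{n-4}{k-1}\right)\left(\binom{n-1}{\ell-1}+\binom{n-4}{\ell-3}\right)$ is not the stated bound; the true extremal pair is $\mathcal A_1=\mathcal F^{n,k,1,2}=\{A: 1\in A \mbox{ or } \{2,3\}\subseteq A\}$ and $\mathcal B_1=\mathcal F_{n,\ell,1,2}$, with $|\mathcal F_{n,\ell,1,2}|=\binom{n-1}{\ell-1}-\binom{n-3}{\ell-1}$. Moreover, the hypothesis confines $|\mathcal A|$ to a \emph{single} block of the (P1$'$) decomposition: $\mathcal A$ grows only into $\mathcal A_2\setminus\mathcal A_1=\{A: 1\notin A,\ A\cap[2,4]=\{2,4\}\}$ of size $\binom{n-4}{k-2}$, while $\mathcal B$ loses part of $\mathcal B_1\setminus\mathcal B_2=\{B: 1\in B,\ B\cap[2,4]=\{3\}\}$ of size $\binom{n-4}{\ell-2}$; so the multi-block aggregation via (F2) and the $t$-uniform per-block estimates you plan are unnecessary, and one regular bipartite graph suffices, exactly as in the paper's proof of Proposition~\ref{prop1}.

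The more serious problem is that the one step carrying all the content --- the numerical inequality, which in the correct setup reads $\frac{|\mathcal B_1|}{|\mathcal A_1|}=\frac{\binom{n-2}{\ell-2}+\binom{n-3}{\ell-2}}{\binom{n-1}{k-1}+\binom{n-3}{k-2}}<\frac{\binom{n-4}{\ell-2}}{\binom{n-4}{k-2}}$ --- is precisely the step you defer (``monotone in $n$, confirm at $n=2\ell$'') without carrying out, so the proposal is a plan rather than a proof at its critical point; note also that your version of the target inequality compares quantities with the $k$- and $\ell$-sides interchanged, an artifact of the role swap above. For the record, the paper settles it in two lines: first $\frac{\binom{n-2}{\ell-2}+\binom{n-3}{\ell-2}}{\binom{n-1}{k-1}+\binom{n-3}{k-2}}<\frac{\binom{n-2}{\ell-2}+2\binom{n-4}{\ell-2}}{\binom{n-2}{k-1}+2\binom{n-3}{k-2}}$, then splits numerator and denominator via (F2) using $\frac{\binom{n-2}{\ell-2}}{\binom{n-2}{k-1}}=\frac{(k-1)(n-k-1)}{(n-\ell)(n-\ell-1)}\cdot\frac{\binom{n-4}{\ell-2}}{\binom{n-4}{k-2}}<\frac{\binom{n-4}{\ell-2}}{\binom{n-4}{k-2}}$, where $(k-1)(n-k-1)<(n-\ell)(n-\ell-1)$ is exactly where $n\ge 2\ell>2k$ (in particular $\ell>k$) enters. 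In summary: right machinery, but wrong families and parameters, a wrongly claimed equality case, and the decisive binomial estimate missing; to repair the write-up, swap the roles as above and supply this comparison.
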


 \begin{proposition} \label{prop3}
Let $n, k, \ell$  be positive integers with $n<  \ell^2$ and $\ell\ge k>0$.
If two families $\mathcal{A}\subseteq {\binom{[n]}{k}}$ and $\mathcal{B}\subseteq {\binom{[n]}{\ell}}$ are cross-intersecting with
$ {\binom{n-1}{k-1}}+\sum_{i=3}^4{\binom{n-i}{k-2}} \le |\mathcal{A}|\le {\binom{n-1}{k-1}}+\sum_{i=3}^5{\binom{n-i}{k-2}}$, then
\begin{equation*}
 |\mathcal{A}||\mathcal{B}|\leq \left({\binom{n-1}{k-1}}+\sum_{i=3}^4{\binom{n-i}{k-2}}\right)\left( \binom{n-2}{\ell-2}+\binom{n-4}{\ell-3} \right).
 \end{equation*}
 \end{proposition}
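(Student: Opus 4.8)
The plan is to run the bipartite-graph scheme prepared in Section~2, comparing $\mathcal A$ against the left endpoint of the size window. First, by Hilton's Lemma~\ref{lem1} together with conditions (C1) and (C2) I would reduce to the case where $\mathcal A=\mathcal L(n,k,|\mathcal A|)$ and $\mathcal B=\mathcal L(n,\ell,|\mathcal B|)$ are maximal cross-intersecting lexicographic initial segments; this is legitimate since $|\mathcal A|\ge\binom{n-1}{k-1}+\sum_{i=3}^{4}\binom{n-i}{k-2}>\binom{n-1}{k-1}$. In this regime the whole star $\{A:1\in A\}$ lies in $\mathcal A$, and (using $n\ge k+\ell$, which is available in every application since $n\ge 2\ell$) every $B\in\mathcal B$ contains $1$. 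A direct count then pins the two endpoints of the window: $\mathcal A$ sits between
\[\mathcal A_0=\{A:1\in A\}\cup\{A:1\notin A,\,2\in A,\,A\cap[3,4]\neq\emptyset\}\]
and $\mathcal A_1=\mathcal A_0\sqcup\mathcal D$, where $\mathcal D=\{A:1\notin A,\,2\in A,\,3,4\notin A,\,5\in A\}$ has size $\binom{n-5}{k-2}$; correspondingly $\mathcal B$ runs between the maximal partner $\mathcal B_0=\{B:\{1,2\}\subseteq B\}\cup\{B:1\in B,\,2\notin B,\,\{3,4\}\subseteq B\}$ of $\mathcal A_0$, of size $\binom{n-2}{\ell-2}+\binom{n-4}{\ell-3}$, and $\mathcal B_1=\mathcal B_0\setminus N(\mathcal D)$. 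Writing $\mathcal A=\mathcal A_0\sqcup\mathcal X$ with $\mathcal X\subseteq\mathcal D$ a lex-initial segment, maximality forces $\mathcal B=\mathcal B_0\setminus N(\mathcal X)$.

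By Result~\ref{r1} it then suffices to prove the neighbourhood-ratio inequality
\[\frac{|\mathcal B_0|}{|\mathcal A_0|+|\mathcal X|}\le\frac{|N(\mathcal X)|}{|\mathcal X|}\]
for every nonempty initial segment $\mathcal X\subseteq\mathcal D$, which says precisely that enlarging $\mathcal A$ beyond $\mathcal A_0$ never increases the product. To bound the right-hand side I would note that a set of $\mathcal B_0$ can be disjoint from a member of $\mathcal D$ only if it lies in $\mathcal E:=\{B:1\in B,\,2\notin B,\,\{3,4\}\subseteq B,\,5\notin B\}$ (sets containing $2$ or $5$ always meet every $A\in\mathcal D$), so $N(\mathcal X)=N_{\mathcal E}(\mathcal X)$ inside the Kneser graph $KG(\mathcal D,\mathcal E)$. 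After deleting the fixed coordinates this is the disjointness graph between $\binom{[6,n]}{k-2}$ and $\binom{[6,n]}{\ell-3}$, which is a regular bipartite graph (cf.\ property (P4)); Result~\ref{factbipartite} gives the crude bound $|N(\mathcal X)|/|\mathcal X|\ge|\mathcal E|/|\mathcal D|=\binom{n-5}{\ell-3}/\binom{n-5}{k-2}$. For the sharper dependence on $|\mathcal X|$ that is needed when $\mathcal X$ is small, I would pass to complements: $N(\mathcal X)$ becomes an $(\ell-3)$-shadow of the complemented family, and since the complement of a lex-initial segment is a Kruskal--Katona minimiser, Theorems~\ref{kkl} and~\ref{Lov1979} bound $|N(\mathcal X)|$ below in terms of $|\mathcal X|$.

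The crux is the resulting comparison, which by combining the regular pieces through (F1) and (F2) reduces to its block endpoints, the hardest being the full block $\mathcal X=\mathcal D$:
\[\frac{\binom{n-2}{\ell-2}+\binom{n-4}{\ell-3}}{\binom{n-1}{k-1}+\binom{n-3}{k-2}+\binom{n-4}{k-2}+\binom{n-5}{k-2}}\le\frac{\binom{n-5}{\ell-3}}{\binom{n-5}{k-2}}.\]
I expect this binomial estimate to be the main obstacle. To leading order both sides grow like $n^{\ell-k-1}$, with ratio tending to $\tfrac{\ell-2}{k-1}$, so the inequality is only marginal when $\ell$ is close to $k$, and it genuinely reverses for large $n$ in the regime $\ell=k$ relevant to the case $n>2\ell=2k$. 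It is exactly the hypothesis $n<\ell^2$ that secures it in this delicate range, which explains why that upper bound on $n$ appears. Checking the intermediate initial segments is comparatively routine, since both sides of the ratio inequality are monotone in $|\mathcal X|$ and the regularity/shadow bounds interpolate between the block endpoints. Finally, tracing back through Hilton's Lemma, equality in Result~\ref{r1} and Result~\ref{factbipartite} forces $\mathcal X=\emptyset$, so the bound is attained exactly at the pair $(\mathcal A_0,\mathcal B_0)$, giving the asserted estimate $|\mathcal A||\mathcal B|\le\bigl(\binom{n-1}{k-1}+\sum_{i=3}^{4}\binom{n-i}{k-2}\bigr)\bigl(\binom{n-2}{\ell-2}+\binom{n-4}{\ell-3}\bigr)$.
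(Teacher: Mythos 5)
Your construction matches the paper's proof of Proposition~\ref{prop3} in its skeleton: your $\mathcal A_0$, $\mathcal D$, $\mathcal B_0$, $\mathcal E$ are exactly the paper's $\mathcal A_2$, $\mathcal A_3\setminus\mathcal A_2$, $\mathcal B_2$, $\mathcal B_2\setminus\mathcal B_3$ (from the proofs of Propositions~\ref{prop2} and~\ref{prop3}), and the reduction via (C1), (C2), Result~\ref{r1} and the regularity of $KG(\mathcal D,\mathcal E)$ is the paper's scheme. But there is a genuine logical gap in your reduction: you declare the full block $\mathcal X=\mathcal D$ to be the hardest case and display the ratio inequality with denominator $|\mathcal A_0|+|\mathcal D|$. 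With only the uniform regularity bound $|N(\mathcal X)|/|\mathcal X|\ge|\mathcal E|/|\mathcal D|$ from Result~\ref{factbipartite}, the binding case is the opposite one, $|\mathcal X|$ \emph{small}: for $|\mathcal X|=1$ you need $\frac{|\mathcal B_0|}{|\mathcal A_0|+1}\le\frac{|N(\mathcal X)|}{|\mathcal X|}$, and the uniform bound only delivers this if you prove the strictly stronger inequality
\begin{equation*}
\frac{\binom{n-2}{\ell-2}+\binom{n-4}{\ell-3}}{\binom{n-1}{k-1}+\binom{n-3}{k-2}+\binom{n-4}{k-2}}\le \frac{\binom{n-5}{\ell-3}}{\binom{n-5}{k-2}},
\end{equation*}
which is exactly the paper's inequality~(\ref{iprop3}), with denominator $|\mathcal A_0|$ and no $|\mathcal X|$ term. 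Your claim that the intermediate segments are ``routine since both sides are monotone in $|\mathcal X|$'' is backwards: as $|\mathcal X|$ shrinks your left side grows, and regularity provides no compensating growth of $|N(\mathcal X)|/|\mathcal X|$; the Kruskal--Katona/Lov\'asz shadow refinement you invoke for small $\mathcal X$ would have to be carried out quantitatively, and you never do so. The paper sidesteps all of this by proving the single stronger inequality, which covers every nonempty $\mathcal X\subseteq\mathcal D$ at once.

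The second gap is that the key binomial estimate --- the actual mathematical content of the proposition --- is asserted rather than proven (``I expect this binomial estimate to be the main obstacle''). The paper proves~(\ref{iprop3}) by splitting it via (F2) into $\frac{\binom{n-4}{\ell-3}}{\binom{n-3}{k-2}+\binom{n-4}{k-2}}<\frac{\binom{n-5}{\ell-3}}{\binom{n-5}{k-2}}$ and $\frac{\binom{n-2}{\ell-2}}{\binom{n-1}{k-1}}\le\frac{\binom{n-5}{\ell-3}}{\binom{n-5}{k-2}}$; for the second piece it bounds the ratio of the two sides by $\frac{k(n-k)^3}{(\ell-1)n(n-\ell)^2}$ and substitutes $k=x\ell$, $n=y\ell$ with $0<x\le1$ and $2\le y\le\ell$, the constraint $y\le\ell$ being precisely where $n<\ell^2$ enters (maximizing $x(y-x)^3$ at $x=1$ for $y\ge4$ gives $\frac{\ell}{\ell-1}\cdot\frac{y-1}{y}\le1$, and the range $y\le4$ is checked separately). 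So your diagnosis of why the hypothesis $n<\ell^2$ appears is correct in spirit --- the inequality is marginal near $\ell=k$ and fails for large $n$ there --- but as submitted the proposal proves neither the inequality it states nor the stronger one it actually needs.
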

 Propositions \ref{prop1}--\ref{prop3} imply that $ |\mathcal{A}||\mathcal{B}|\leq  \left({\binom{n-1}{k-1}}+1\right)\left({\binom{n-1}{\ell-1}}-{\binom{n-k-1}{\ell-1}}\right)$ in the appropriate conditions of $n,k,\ell$.
  However, we can not use the similar method to handle the cases  $ {\binom{n-1}{k-1}}+\sum_{i=3}^4{\binom{n-i}{k-2}} \le |\mathcal{A}|\le {\binom{n-1}{k-1}}+\sum_{i=3}^5{\binom{n-i}{k-2}}$ with $n\ge \ell^2$, and $ {\binom{n-1}{k-1}}+\sum_{i=3}^{5}{\binom{n-i}{k-2}} \le |\mathcal{A}|\le {\binom{n-1}{k-1}}+\sum_{i=3}^{\ell+1}{\binom{n-i}{k-2}}$.
  We will handle it in the next section by using a subtle polynomial.

  \begin{proposition} \label{prop4}
Let $n, k, \ell$  be positive integers with $n\ge  2\ell > 2k>0$.
If two families $\mathcal{A}\subseteq {\binom{[n]}{k}}$ and $\mathcal{B}\subseteq {\binom{[n]}{\ell}}$ are cross-intersecting with
$ {\binom{n-1}{k-1}}+\sum_{i=3}^{\ell+1}{\binom{n-i}{k-2}} \le |\mathcal{A}|\le {\binom{n-1}{k-1}}+{\binom{n-2}{k-1}}$, then
\begin{equation*}
 |\mathcal{A}||\mathcal{B}|\leq  \left({\binom{n-1}{k-1}}+{\binom{n-2}{k-1}}\right) {\binom{n-2}{\ell-2}}.
 \end{equation*}
 \end{proposition}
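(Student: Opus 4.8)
The plan is to apply the neighbourhood-ratio machinery behind \eqref{baseine} to the candidate extremal pair
\[
\mathcal A^{*}=\mathcal F^{n,k,1,1}=\{A:1\in A\text{ or }2\in A\},\qquad \mathcal B^{*}=\mathcal F_{n,\ell,1,1}=\{B:[2]\subseteq B\},
\]
whose sizes are, by (P2), exactly $|\mathcal A^{*}|=\binom{n-1}{k-1}+\binom{n-2}{k-1}$ and $|\mathcal B^{*}|=\binom{n-2}{\ell-2}$, so that the asserted bound is $|\mathcal A^{*}||\mathcal B^{*}|$. First I reduce to a canonical form: by Lemma~\ref{lem1} together with (C1) and (C2) I may take $\mathcal A,\mathcal B$ maximal cross-intersecting and, since the hypothesis forces $|\mathcal A|>\binom{n-1}{k-1}$, may take $\mathcal A=\mathcal L(n,k,|\mathcal A|)$. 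Because $\{A:1\in A\}$, $\mathcal F^{n,k,1,2}$ and $\mathcal A^{*}$ are all lexicographic initial segments, the two size hypotheses place $\mathcal F^{n,k,1,2}\subseteq\mathcal A\subseteq\mathcal A^{*}$. Dually, using $n\ge 2\ell>2k$ (hence $|[n]\setminus B|\ge k$) together with maximality, every $B\in\mathcal B$ must contain $1$ and must meet $\{2,3\}$, which gives $\mathcal B^{*}\subseteq\mathcal B\subseteq\mathcal F_{n,\ell,1,2}$.

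Second, set $\mathcal R=\mathcal A^{*}\setminus\mathcal A$ and $\mathcal S=\mathcal B\setminus\mathcal B^{*}$. Maximality forces $\mathcal R=N_{\mathcal A^{*}}(\mathcal S)$: a set $A\in\mathcal A^{*}$ is deleted precisely when some $B\in\mathcal B$ is disjoint from it, and since $\mathcal A^{*}$ meets all of $\mathcal B^{*}$ (the templates cross-intersect by (P3)), that $B$ lies in $\mathcal S$. Feeding this into Result~\ref{r1} exactly as in the derivation of \eqref{baseine} (with $\mathcal A$ the family contained in the template and $\mathcal B$ the family containing its partner) reduces the whole proposition to the single estimate
\[
\frac{|\mathcal R|}{|\mathcal S|}=\frac{|N_{\mathcal A^{*}}(\mathcal S)|}{|\mathcal S|}\;\ge\;\frac{|\mathcal A^{*}|}{|\mathcal B^{*}|+|\mathcal S|}.
\]
Deleting the forced elements $1,2$ identifies $\mathcal R$ and $\mathcal S$ with a cross-intersecting pair of $(k-1)$- and $(\ell-1)$-families on $[3,n]$ in which $\mathcal R$ is the full Kneser-neighbourhood of $\mathcal S$; the graph $KG\!\left(\binom{[3,n]}{k-1},\binom{[3,n]}{\ell-1}\right)$ is biregular with part-degrees $\binom{n-\ell-1}{k-1}$ and $\binom{n-k-1}{\ell-1}$. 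A short binomial identity shows the lower size hypothesis is equivalent to $|\mathcal R|\le\binom{n-\ell-1}{k-1}$, i.e. $\mathcal R$ is no larger than the neighbourhood of a single vertex.

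Third, to bound the left-hand ratio I would decompose $\mathcal S$ and $\mathcal R$ into the level families $\mathcal F_{n,\ell,1,2,i}$ and $\mathcal F^{n,k,1,1,i}$ supplied by (P1$'$)--(P4$'$): by (P4$'$) each $KG(\mathcal F^{n,k,1,1,i},\mathcal F_{n,\ell,1,2,i})$ is regular, so Result~\ref{factbipartite} gives a per-level bound equal to the binomial ratio $|\mathcal F^{n,k,1,1,i}|/|\mathcal F_{n,\ell,1,2,i}|$ read off from (P2$'$), and (F1), (F2) combine these over the levels actually occupied. The constraint $|\mathcal R|\le\binom{n-\ell-1}{k-1}$, pushed through the neighbourhood structure (P3$'$), restricts $\mathcal S$ to the levels on which these per-level ratios are large; comparing the combined bound with the $|\mathcal S|$-dependent target then closes the inequality, and tracking equality (via Result~\ref{factbipartite} together with the uniqueness in Theorem~\ref{mors1}) forces $\mathcal R=\mathcal S=\emptyset$, i.e. $\mathcal A=\mathcal A^{*}$ and $\mathcal B=\mathcal B^{*}$, the unique extremal pair of Theorem~\ref{maintheo}(1).

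I expect the main obstacle to be this last comparison. The global biregular bound $|\mathcal R|/|\mathcal S|\ge\binom{n-\ell-1}{k-1}\big/\binom{n-k-1}{\ell-1}$ coming directly from Result~\ref{factbipartite} is too weak when $\mathcal S$ is small, precisely because the target $|\mathcal A^{*}|/(|\mathcal B^{*}|+|\mathcal S|)$ is then largest; so one genuinely needs the size-sensitive, level-by-level refinement (or a Kruskal--Katona estimate, Theorem~\ref{kkl}) rather than the averaged bound. The delicate point is to verify that the size hypothesis $|\mathcal R|\le\binom{n-\ell-1}{k-1}$ excludes exactly the low-ratio levels of $\mathcal S$, so that the surviving per-level ratios clear the moving target uniformly throughout the regime $n\ge 2\ell>2k$; this is where the monotonicity of the relevant ratios of binomial coefficients forced by $n\ge 2\ell$ must be used.
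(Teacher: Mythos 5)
Your executed steps are fine as far as they go: the sandwich $\mathcal F^{n,k,1,2}\subseteq\mathcal A\subseteq\mathcal F^{n,k,1,1}$ and $\mathcal F_{n,\ell,1,1}\subseteq\mathcal B\subseteq\mathcal F_{n,\ell,1,2}$, the identity $\mathcal R=N_{\mathcal A^*}(\mathcal S)$ from maximality, the reduction via Result~\ref{r1} to $|N_{\mathcal A^*}(\mathcal S)|/|\mathcal S|\ge |\mathcal A^*|/(|\mathcal B^*|+|\mathcal S|)$, and the translation of the lower size hypothesis into $|\mathcal R|\le\binom{n-\ell-1}{k-1}$ are all correct. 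But the third stage is a plan, not a proof: the decisive comparison is never carried out, and you flag it yourself as the main obstacle. (Two smaller slips there: the level families (P1$'$)--(P4$'$) are defined for the mirrored configuration of Theorem~\ref{theo3}, where the $k$-family sits inside the subscript template $\mathcal F_{n,k,r,s}$, so they cannot be quoted verbatim in your setting; and $\mathcal R$, $\mathcal S$ are not ``a cross-intersecting pair'' --- $\mathcal R$ is by construction the Kneser neighbourhood of $\mathcal S$.) The missing idea is one line, and it makes the whole level decomposition unnecessary: every $B\in\mathcal S$ satisfies $1,3\in B$, $2\notin B$, hence has \emph{exactly} $\binom{n-\ell-1}{k-1}$ neighbours in $\mathcal A^*$ (the sets $\{2\}\cup S$ with $S\in\binom{[n]\setminus(B\cup\{2\})}{k-1}$), and distinct $B$ give distinct neighbourhoods. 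So $\mathcal S\neq\emptyset$ already forces $|\mathcal R|\ge\binom{n-\ell-1}{k-1}$, and combined with your constraint $|\mathcal R|\le\binom{n-\ell-1}{k-1}$ this pins $|\mathcal S|\le 1$. Your target inequality then collapses to the single check $\binom{n-1}{k-1}+\binom{n-2}{k-1}\le\binom{n-\ell-1}{k-1}\left(\binom{n-2}{\ell-2}+1\right)$, which holds throughout $n\ge 2\ell>2k$ (tight at $(n,k,\ell)=(6,2,3)$, where it reads $9\le 10$). Without this observation your worry is justified: the averaged bound of Result~\ref{factbipartite} is genuinely too weak for small $|\mathcal S|$, and per-level regularity alone does not rescue it, since the relevant graph here has every right-vertex of full degree equal to the total budget for $|\mathcal R|$.

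For comparison, the paper's proof of this proposition uses none of the ratio machinery. It is a two-line counting argument: under (C1)--(C2), the lower bound $|\mathcal A|\ge\binom{n-1}{k-1}+\sum_{i=3}^{\ell+1}\binom{n-i}{k-2}$ means the lexicographic $\mathcal A$ contains every $k$-set $A$ with $2\in A$, $1\notin A$, $A\cap[3,\ell+1]\neq\emptyset$; cross-intersection then forces $\mathcal B\subseteq\{B\in\binom{[n]}{\ell}:[2]\subseteq B\}$, so $|\mathcal B|\le\binom{n-2}{\ell-2}$, and multiplying by the hypothesized ceiling $|\mathcal A|\le\binom{n-1}{k-1}+\binom{n-2}{k-1}$ gives the bound directly --- no inequality (\ref{baseine}), no regular bipartite graphs, no Kruskal--Katona. (Strictly speaking, at the endpoint where $|\mathcal A|$ equals the lower bound, $\mathcal B$ may additionally contain the single set $\{1\}\cup[3,\ell+1]$ --- exactly the $|\mathcal S|=1$ configuration above --- so a completed version of your route would in fact patch this small imprecision in the paper's sentence; the numerical check above shows the claimed bound still holds there.) As submitted, however, your proposal leaves the proposition unproven.
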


 \begin{proposition} \label{prop5}
Let $n, k, \ell$  be positive integers with $n\ge  2\ell > 2k>0$.
If two families $\mathcal{A}\subseteq {\binom{[n]}{k}}$ and $\mathcal{B}\subseteq {\binom{[n]}{\ell}}$ are cross-intersecting with
 $|\mathcal{A}|\ge \sum_{i=1}^{s}{\binom{n-i}{k-1}}$  for some $s\ge 2$, then
\begin{equation*}
 |\mathcal{A}||\mathcal{B}|\leq  \sum_{i=1}^s{\binom{n-i}{k-1}}\binom {n-s}{\ell-s}.
 \end{equation*}
 \end{proposition}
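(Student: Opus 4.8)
The plan is to prove Proposition \ref{prop5} by downward induction on $s$, starting from $s=\ell$, after normalizing the two families. By Lemma \ref{lem1} together with (C1) and (C2) I may assume $\mathcal A$ and $\mathcal B$ are maximal cross-intersecting and are lexicographic-initial segments; this is legitimate because $s\ge 2$ forces $|\mathcal A|\ge\binom{n-1}{k-1}+\binom{n-2}{k-1}>\binom{n-1}{k-1}$. Writing $B(s):=\left(\sum_{i=1}^{s}\binom{n-i}{k-1}\right)\binom{n-s}{\ell-s}$ for the claimed bound, I set $\mathcal A_0:=\mathcal F^{n,k,s-1,1}=\{A:A\cap[s]\neq\emptyset\}$ and $\mathcal B_0:=\mathcal F_{n,\ell,s-1,1}=\{B:[s]\subseteq B\}$, which by (P3) are maximal cross-intersecting with $|\mathcal A_0|=\sum_{i=1}^{s}\binom{n-i}{k-1}$ and $|\mathcal B_0|=\binom{n-s}{\ell-s}$, so that $B(s)=|\mathcal A_0||\mathcal B_0|$. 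Since $\mathcal A$ is lex-initial of size at least $|\mathcal A_0|$ we get $\mathcal A_0\subseteq\mathcal A$, hence $\mathcal B\subseteq\mathcal B_0$, and I write $\mathcal X:=\mathcal A\setminus\mathcal A_0$, so that $\mathcal B=\mathcal B_0\setminus N(\mathcal X)$ by maximality.

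The base case $s=\ell$ is immediate: then $\mathcal B_0=\{[\ell]\}$, so $|\mathcal B|\le 1$; if $|\mathcal B|=1$ every member of $\mathcal A$ meets $[\ell]$, forcing $\mathcal A\subseteq\mathcal A_0$ and $|\mathcal A||\mathcal B|\le|\mathcal A_0|=B(\ell)$, while if $\mathcal B=\emptyset$ the product is $0$. For the inductive step the decisive structural observation is that the extremal product is monotone, namely $B(s+1)\le B(s)$; this reduces to $(\ell-s)\binom{n-s-1}{k-1}\le(n-\ell)\sum_{i=1}^{s}\binom{n-i}{k-1}$, which holds because $\ell-s\le\ell\le n-\ell$ and $\binom{n-s-1}{k-1}\le\binom{n-1}{k-1}\le\sum_{i=1}^{s}\binom{n-i}{k-1}$. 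Consequently, whenever $|\mathcal A|\ge\sum_{i=1}^{s+1}\binom{n-i}{k-1}$ the induction hypothesis gives $|\mathcal A||\mathcal B|\le B(s+1)\le B(s)$, and nothing more is needed.

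It remains to treat the range $\sum_{i=1}^{s}\binom{n-i}{k-1}\le|\mathcal A|<\sum_{i=1}^{s+1}\binom{n-i}{k-1}$, i.e. $0\le|\mathcal X|<\binom{n-s-1}{k-1}$, in which $\mathcal X$ is a lex-initial subfamily of the single layer $\mathcal X_1:=\{A\in\binom{[s+1,n]}{k}:\min A=s+1\}$. By Result \ref{r1} the target $|\mathcal A||\mathcal B|\le|\mathcal A_0||\mathcal B_0|$ is equivalent to $\frac{|N(\mathcal X)|}{|\mathcal X|}\ge\frac{|\mathcal B_0|}{|\mathcal A_0|+|\mathcal X|}$ (the case $\mathcal X=\emptyset$ being trivial). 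Here the bipartite Kneser graph $KG(\mathcal X_1,N(\mathcal X_1))$ with $N(\mathcal X_1)=\{B\in\mathcal B_0:s+1\notin B\}$ is biregular, as in (P4$'$), so Result \ref{factbipartite} yields $\frac{|N(\mathcal X)|}{|\mathcal X|}\ge\frac{|N(\mathcal X_1)|}{|\mathcal X_1|}=\frac{\binom{n-s-1}{\ell-s}}{\binom{n-s-1}{k-1}}$. Combining this with $|\mathcal A_0|+|\mathcal X|\ge\sum_{i=1}^{s}\binom{n-i}{k-1}+1$ closes the step once I verify $\sum_{i=1}^{s}\binom{n-i}{k-1}+1\ge\frac{n-s}{n-\ell}\binom{n-s-1}{k-1}$, which follows from $\frac{n-s}{n-\ell}\le 2$ (a consequence of $n\ge 2\ell$) and $\sum_{i=1}^{s}\binom{n-i}{k-1}\ge 2\binom{n-s-1}{k-1}$.

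The part requiring the most care, and which I expect to be the main obstacle, is the small-$\mathcal X$ regime: the global regular-graph ratio $\binom{n-s}{\ell-s}/\binom{n-s}{k}$ over all of $\binom{[s+1,n]}{k}$ is genuinely too weak there, since it can drop below $|\mathcal B_0|/(|\mathcal A_0|+|\mathcal X|)$, and it is exactly the restriction to the single lex-layer $\mathcal X_1$—made available by the monotonicity $B(s+1)\le B(s)$ together with the induction—that supplies the sharper biregular ratio. Inside this regime the inequality is tightest at $|\mathcal X|=1$, so I would check that boundary value first and then note that enlarging $|\mathcal X|$ only relaxes the right-hand side. If a non-inductive route is preferred, the alternative is to exploit that $\mathcal X$ is lex-initial, so $N(\mathcal X)$ is a shadow of the complementary family, and to apply Kruskal--Katona and Lov\'asz (Theorems \ref{kkl} and \ref{Lov1979}) to reduce everything to the single-variable inequality $\bigl(|\mathcal A_0|+\binom{x}{n-s-k}\bigr)\bigl(\binom{n-s}{\ell-s}-\binom{x}{\ell-s}\bigr)\le|\mathcal A_0|\binom{n-s}{\ell-s}$ on $x\in[n-s-k,\,n-s]$; there the difficulty migrates to controlling the interior behaviour of this function, which the inductive argument sidesteps entirely.
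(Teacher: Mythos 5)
Your proposal is correct and takes essentially the same route as the paper: after the Hilton lex normalization via (C1)--(C2), both arguments compare $(\mathcal A,\mathcal B)$ with $(\mathcal F^{s},\mathcal F_{s})$ through Result \ref{r1} and the regular bipartite graph $KG(\mathcal F^{s+1}\setminus\mathcal F^{s},\,\mathcal F_{s}\setminus\mathcal F_{s+1})$ (your $KG(\mathcal X_1,N(\mathcal X_1))$), closing with the same ratio inequality $\frac{\binom{n-s}{\ell-s}}{\sum_{i=1}^{s}\binom{n-i}{k-1}+1}\le\frac{\binom{n-s-1}{\ell-s}}{\binom{n-s-1}{k-1}}$ verified by the same factor-of-two estimates. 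Your downward induction with the explicit monotonicity $B(s+1)\le B(s)$ merely formalizes a step the paper leaves implicit, since the paper proves the bound only for the interval $\sum_{i=1}^{r}\binom{n-i}{k-1}\le|\mathcal A|\le\sum_{i=1}^{r+1}\binom{n-i}{k-1}$ containing $|\mathcal A|$ and tacitly uses that this bound is nonincreasing in $r$.
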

Propositions \ref{prop4} and \ref{prop5} imply that $|\cal{A}||\cal{B}|\le \left({\binom{n-1}{k-1}}+{\binom{n-2 }{k-1}}\right){\binom{n-2}{\ell-2}}$  in the appropriate conditions of $n,k,\ell$.
The following corollary follows immediately.
\begin{coro}\label{coroA}
Let $n, k, \ell$  be positive integers with $n\ge  2\ell > 2k>0$.
Let $\mathcal{A}\subseteq {\binom{[n]}{k}}$ and $\mathcal{B}\subseteq {\binom{[n]}{\ell}}$ be  two cross-intersecting families.
If the size of $\mathcal{A}$ satisfies one of the following three conditions,
\newline
{\rm (i)} ${\binom{n-1}{k-1}}+1 \le |\mathcal{A}|\le {\binom{n-1}{k-1}}+\sum_{i=3}^4{\binom{n-i}{k-2}}$;
\newline
{\rm (ii)} ${\binom{n-1}{k-1}}+\sum_{i=3}^4{\binom{n-i}{k-2}} \le |\mathcal{A}|\le {\binom{n-1}{k-1}}+\sum_{i=3}^5{\binom{n-i}{k-2}}$ with $n<\ell^2$;
\newline
{\rm (iii)} $|\mathcal{A}|\ge {\binom{n-1}{k-1}}+\sum_{i=3}^{\ell+1}{\binom{n-i}{k-2}}$, then we have
\[
 |\mathcal{A}||\mathcal{B}|\leq  \max \left\{\left({\binom{n-1}{k-1}}+1\right)\left({\binom{n-1}{\ell-1}}-{\binom{n-k-1}{\ell-1}}\right), \left({\binom{n-1}{k-1}}+{\binom{n-2}{k-1}}\right) {\binom{n-2}{\ell-2}} \right\}.
\]
\end{coro}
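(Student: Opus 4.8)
The plan is to derive Corollary~\ref{coroA} directly from Propositions~\ref{prop1}--\ref{prop5} by a chain of reductions, the central idea being that each proposition bounds $|\mathcal A||\mathcal B|$ on a \emph{window} of values of $|\mathcal A|$ by the value of the product at the \emph{left endpoint} of that window, and that at every such endpoint the product equals one of the two quantities in the final $\max$ or something we can dominate by them. First I would record the two target quantities
\[
P_1=\left({\binom{n-1}{k-1}}+1\right)\left({\binom{n-1}{\ell-1}}-{\binom{n-k-1}{\ell-1}}\right),\qquad
P_2=\left({\binom{n-1}{k-1}}+{\binom{n-2}{k-1}}\right){\binom{n-2}{\ell-2}},
\]
and note that it suffices to show $|\mathcal A||\mathcal B|\le\max\{P_1,P_2\}$ under each of (i), (ii), (iii); I may assume (C1)--(C2), so in particular $\mathcal A=\mathcal L(n,k,|\mathcal A|)$ whenever $|\mathcal A|>{\binom{n-1}{k-1}}$.

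For case (i), I would split the window $[{\binom{n-1}{k-1}}+1,\ {\binom{n-1}{k-1}}+\sum_{i=3}^4{\binom{n-i}{k-2}}]$ at the cascade breakpoints ${\binom{n-1}{k-1}}+{\binom{n-s-1}{k-s}}$ for $3\le s\le k$ and at ${\binom{n-1}{k-1}}+{\binom{n-3}{k-2}}$, apply Proposition~\ref{prop1} on each subinterval $[{\binom{n-1}{k-1}}+{\binom{n-s-1}{k-s}},\ {\binom{n-1}{k-1}}+{\binom{n-3}{k-2}}]$ and Proposition~\ref{prop2} on $[{\binom{n-1}{k-1}}+{\binom{n-3}{k-2}},\ {\binom{n-1}{k-1}}+\sum_{i=3}^4{\binom{n-i}{k-2}}]$, and then bound each resulting right-hand side by $P_1$. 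The key sub-step is the purely algebraic inequality
\[
\left({\binom{n-1}{k-1}}+{\binom{n-s-1}{k-s}}\right)\left({\binom{n-1}{\ell-1}}-{\binom{n-s-1}{\ell-1}}\right)\le P_1
\]
for each $s$ in range, which I would prove by viewing the left side as a function of $s$ and checking monotonicity via (F1); the extreme case $s=k$ recovers exactly $P_1$. For case (ii) I would invoke Proposition~\ref{prop3} (legitimate because $n<\ell^2$) on $[{\binom{n-1}{k-1}}+\sum_{i=3}^4{\binom{n-i}{k-2}},\ {\binom{n-1}{k-1}}+\sum_{i=3}^5{\binom{n-i}{k-2}}]$, obtaining the bound $\left({\binom{n-1}{k-1}}+\sum_{i=3}^4{\binom{n-i}{k-2}}\right)\left(\binom{n-2}{\ell-2}+\binom{n-4}{\ell-3}\right)$, which I would again compare against $\max\{P_1,P_2\}$ by a binomial-coefficient estimate.

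For case (iii), $|\mathcal A|\ge{\binom{n-1}{k-1}}+\sum_{i=3}^{\ell+1}{\binom{n-i}{k-2}}$, I would use Proposition~\ref{prop4} to cover the window up to ${\binom{n-1}{k-1}}+{\binom{n-2}{k-1}}$, giving the bound $P_2$ exactly, and Proposition~\ref{prop5} (with the identity $\sum_{i=1}^{s}{\binom{n-i}{k-1}}={\binom{n}{k}}-{\binom{n-s}{k}}$ for recognizing the thresholds, and the base case $s=2$ giving ${\binom{n-1}{k-1}}+{\binom{n-2}{k-1}}={\binom{n}{k}}-{\binom{n-2}{k}}$) to cover all larger $|\mathcal A|$; here the right-hand sides $\sum_{i=1}^s{\binom{n-i}{k-1}}\binom{n-s}{\ell-s}$ are decreasing in $s$ in the relevant range, so they are all dominated by the $s=2$ value $P_2$. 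The main obstacle I anticipate is \emph{seamlessly tiling} the stated $|\mathcal A|$-ranges by the overlapping windows of the five propositions --- one must verify that $\sum_{i=3}^{\ell+1}{\binom{n-i}{k-2}}$ and the threshold $\sum_{i=1}^{s}{\binom{n-i}{k-1}}$ for suitable $s$ line up so that Propositions~\ref{prop4} and~\ref{prop5} between them leave no gap, and that the endpoint-monotonicity comparisons with $P_1$ and $P_2$ hold for all admissible $k,\ell$ under the hypothesis $n\ge 2\ell>2k$ (and $n<\ell^2$ only where needed). These are all elementary binomial manipulations using (F1)--(F2) and Result~\ref{r1}, but getting the interval arithmetic exactly right is where the care is required.
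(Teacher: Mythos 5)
Your proposal is correct and is essentially the paper's own argument: the paper states that Corollary~\ref{coroA} ``follows immediately'' from Propositions~\ref{prop1}--\ref{prop5}, and your window-tiling of the $|\mathcal A|$-ranges (Propositions~\ref{prop1}--\ref{prop2} for (i), Proposition~\ref{prop3} for (ii), Propositions~\ref{prop4}--\ref{prop5} for (iii)) together with the endpoint comparisons against $\max\{P_1,P_2\}$ is exactly the derivation the paper leaves implicit. One small simplification: in case (iii) you need no monotonicity in $s$ for Proposition~\ref{prop5}, since its hypothesis is a one-sided inequality, so taking $s=2$ alone already yields the bound $P_2$ for every $|\mathcal{A}|\ge\binom{n-1}{k-1}+\binom{n-2}{k-1}$.
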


\begin{theo}\label{theo3}
Let $n, k, \ell$ be positive integers with $\ell \ge k >0$ and $n\ge  k+\ell$.
If two families $\mathcal{A}\subseteq {\binom{[n]}{k}}$ and $\mathcal{B}\subseteq {\binom{[n]}{\ell}}$ are cross-intersecting with
  $|\mathcal B|\ge \sum_{1\leq i\leq r}\binom{n-i}{\ell-1}+\binom{n-r-s}{\ell-s}$ for some $r \ge 1$ and  $2\leq s\leq \ell $,
then
$$|\mathcal{A}| |\mathcal{B}| \le \left(\binom{n-r}{k-r}-\binom{n-r-s}{k-r}\right) \left(\sum_{1\leq i\leq r}\binom{n-i}{\ell-1}+\binom{n-r-s}{\ell-s}\right).$$
\end{theo}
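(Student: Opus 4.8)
The plan is to fix a maximal cross-intersecting pair, read off $\mathcal A$ from $\mathcal B$, and reduce the product bound to a single neighbourhood-expansion inequality handled by the regular bipartite decomposition of the preliminaries.

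First I would invoke Lemma \ref{lem1} together with (C1) and (C2) to assume $\mathcal A=\mathcal L(n,k,|\mathcal A|)$ and $\mathcal B=\mathcal L(n,\ell,|\mathcal B|)$ are maximal cross-intersecting; this is legitimate because $m_s:=|\mathcal F^{n,\ell,r,s}|=\sum_{1\le i\le r}\binom{n-i}{\ell-1}+\binom{n-r-s}{\ell-s}>\binom{n-1}{\ell-1}$ whenever $r\ge 1$, so the hypothesis $|\mathcal B|\ge m_s$ puts us in the regime of (C2). Writing $M:=|\mathcal F_{n,k,r,s}|=\binom{n-r}{k-r}-\binom{n-r-s}{k-r}$, the hypothesis means $\mathcal F^{n,\ell,r,s}\subseteq\mathcal B$. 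Since $\mathcal F^{n,\ell,r,s}$ and $\mathcal F_{n,k,r,s}$ are maximal cross-intersecting by (P3), every $A\in\mathcal A$ meets all of $\mathcal F^{n,\ell,r,s}$, whence $\mathcal A\subseteq\mathcal F_{n,k,r,s}$; moreover $\mathcal A$ must avoid $N_{\mathcal F_{n,k,r,s}}(\mathcal B')$, where $\mathcal B':=\mathcal B\setminus\mathcal F^{n,\ell,r,s}$. Thus $|\mathcal A|\le M-|N_{\mathcal F_{n,k,r,s}}(\mathcal B')|$ and $|\mathcal B|=m_s+|\mathcal B'|$. Feeding $c=M$, $a=|N_{\mathcal F_{n,k,r,s}}(\mathcal B')|$, $d=m_s$, $b=|\mathcal B'|$ into Result \ref{r1} (the case $\mathcal B'=\emptyset$ being immediate), the target $|\mathcal A||\mathcal B|\le M\,m_s$ follows once I establish the expansion inequality
\[ \frac{|N_{\mathcal F_{n,k,r,s}}(\mathcal B')|}{|\mathcal B'|}\ \ge\ \frac{M}{m_s+|\mathcal B'|}. \qquad (\star) \]

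To prove $(\star)$ in the range $\mathcal B\subseteq\mathcal F^{n,\ell,r,s-1}$, I would use (P1$'$) to decompose $\mathcal F^{n,\ell,r,s-1}\setminus\mathcal F^{n,\ell,r,s}$ into the blocks $\mathcal F^{n,\ell,r,s-1,j}$ and $\mathcal F_{n,k,r,s}\setminus\mathcal F_{n,k,r,s-1}$ into the blocks $\mathcal F_{n,k,r,s,i}$. Because $\mathcal B'$ is lexicographically initial in the complement it fills these blocks in order, by (P3$'$) the neighbourhood of the first $t$ blocks is exactly the union of the first $t$ blocks on the $k$-side, and by (P4$'$) with Result \ref{factbipartite} each block realizes the ratio $|\mathcal F_{n,k,r,s,t}|/|\mathcal F^{n,\ell,r,s-1,t}|$, the same bound surviving a partially filled final block by its own regularity. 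Combining blocks through the mediant rule (F2), and using that the right side of $(\star)$ decreases as $|\mathcal B'|$ grows, it remains to check that each cumulative block ratio dominates $M/(m_s+|\mathcal B'|)$, a binomial-coefficient estimate drawn from (P2) and (P2$'$). For unrestricted $\mathcal B$, when $|\mathcal B|$ exceeds $m_{s-1}=|\mathcal F^{n,\ell,r,s-1}|$ so that $\mathcal B$ spills past the first block, I would argue by induction: this case is governed by Theorem \ref{theo3} at parameter $s-1$ (and at the boundary $s=2$ by the $(r+1)$-star, i.e.\ Theorem \ref{theoMT}), yielding $|\mathcal A||\mathcal B|\le M'\,m_{s-1}$ with $M'=|\mathcal F_{n,k,r,s-1}|$, after which the step closes via the monotonicity $M'\,m_{s-1}\le M\,m_s$.

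I expect the main obstacle to be exactly this monotonicity together with the uniform verification of $(\star)$ across the transition at $\mathcal F^{n,\ell,r,s-1}$: one must confirm that the cumulative expansion ratios, controlled only block-by-block, never fall below the slowly decreasing target $M/(m_s+|\mathcal B'|)$, and that $P(s):=M\,m_s$ is non-decreasing in $s$ so that the inductive step is consistent. Both reduce to explicit binomial estimates assembled from (P2), (P2$'$), (F1) and (F2), and it is precisely the hypotheses $\ell\ge k$ and $n\ge k+\ell$ that make these estimates go through.
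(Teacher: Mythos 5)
Your skeleton coincides with the paper's own proof: reduce to lexicographic maximal families via Lemma \ref{lem1} and (C1)--(C2), trap $\mathcal F^{n,\ell,r,s}\subseteq\mathcal B$ and $\mathcal A\subseteq\mathcal F_{n,k,r,s}$, convert the product bound into the expansion inequality $(\star)$ by Result \ref{r1} (this is exactly (\ref{baseine})), and verify $(\star)$ blockwise through (P1$'$)--(P4$'$), Result \ref{factbipartite} and the mediant rule (F2). But the two items you defer as ``remaining checks'' contain the actual content. Everything you describe as ``explicit binomial estimates'' is precisely the paper's Lemma \ref{lemmatheo2}, inequality (\ref{ineq21}), and that lemma is the technical core of the whole argument: it needs the cascade identities (\ref{beq2})--(\ref{beq1}), the monotonicity chains (\ref{ineq24}) and (\ref{eq35}), a case split according to whether $k-2\ge \ell-s+1$ or $k-2\le \ell-s$, and a separate reduction of $r>1$ to $r=1$ via the substitution $(n',k')=(n-r+1,k-r+1)$. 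Moreover, your corner-monotonicity step $M'\,m_{s-1}\le M\,m_s$ buys you nothing: writing $M'=M-\binom{n-r-s}{k-r-1}$ and $m_{s-1}=m_s+\binom{n-r-s}{\ell-s+1}$, Result \ref{r1} shows it is equivalent to $(\star)$ with $\mathcal B'$ equal to the entire annulus $\mathcal F^{n,\ell,r,s-1}\setminus\mathcal F^{n,\ell,r,s}$, i.e.\ to the full-$t$ instance of the very same Lemma \ref{lemmatheo2}. This is why the paper runs no induction on $s$ at all: it parametrizes the actual position of $|\mathcal B|$ by the triple $(r,s,t)$ together with a partially filled block $\mathcal X_0\subseteq\mathcal F^{n,\ell,r,s-1,t}$, and proves the ratio inequality once, uniformly in $t$; the monotonicity you need then falls out as a byproduct rather than an extra hypothesis of an inductive step.

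There is also a concrete error at your base case. When the induction reaches $s=2$ and $\mathcal B$ spills past $\mathcal F^{n,\ell,r,1}=\{F: F\cap[r+1]\neq\emptyset\}$, you invoke Theorem \ref{theoMT}; but Matsumoto--Tokushige requires $n\ge 2\ell\ge 2k$, which is unavailable under the present hypothesis $n\ge k+\ell$, and its extremal configuration (a single star) is in any case not the $(r+1)$-star union arising here. The correct continuation is that the lexicographic chain re-enters the same parametrization with $r$ incremented: since $|F|=\ell$ forces $[r+2,r+\ell+1]\subseteq F$ to mean $F=[r+2,r+\ell+1]$, one has $\mathcal F^{n,\ell,r,1}\cup\{[r+2,r+\ell+1]\}=\mathcal F^{n,\ell,r+1,\ell}$, so your scheme would need a double induction on $(r,s)$, terminating because $r\le k-1$ (once $|\mathcal B|>\sum_{1\le i\le k}\binom{n-i}{\ell-1}$ one has $\mathcal A=\emptyset$). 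The paper's single-pass $(r,s,t)$ parametrization absorbs this transition automatically. In short: right architecture, identical in spirit to the paper, but the proposal proves only the reduction; the quantitative heart (Lemma \ref{lemmatheo2}) is asserted rather than established, and the $s=2$ anchor must be repaired as above.
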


The following corollary follows from Theorem \ref{theo3} immediately.
\begin{coro}\label{ctheo3}
Let $n, k, \ell$ be positive integers with $\ell \ge k >0$ and $n\ge  k+\ell$.
If two families $\mathcal{A}\subseteq {\binom{[n]}{k}}$ and $\mathcal{B}\subseteq {\binom{[n]}{\ell}}$ are cross-intersecting with
 $|\mathcal B| \ge \binom{n-1}{\ell-1}+1$,
then
$$|\mathcal{A}| |\mathcal{B}|\le \left(\binom{n-1}{k-1}-\binom{n-\ell-1}{k-1}\right) \left(\binom{n-1}{\ell-1}+1\right).$$
\end{coro}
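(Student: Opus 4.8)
The plan is to obtain Corollary~\ref{ctheo3} as a single specialization of Theorem~\ref{theo3}, with no further combinatorial work. Since Theorem~\ref{theo3} holds for \emph{every} admissible pair $(r,s)$, it suffices to exhibit one choice of $(r,s)$ for which the threshold on $|\mathcal B|$ collapses exactly to $\binom{n-1}{\ell-1}+1$ and for which the resulting bound matches the claimed one. I would take $r=1$ and $s=\ell$.

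First I would verify that the size hypothesis transcribes correctly. With $r=1$ and $s=\ell$ the threshold in Theorem~\ref{theo3} is $\sum_{1\le i\le 1}\binom{n-i}{\ell-1}+\binom{n-1-\ell}{\ell-\ell}=\binom{n-1}{\ell-1}+\binom{n-\ell-1}{0}$. Because $n\ge k+\ell\ge 1+\ell$ we have $n-\ell-1\ge 0$, hence $\binom{n-\ell-1}{0}=1$ and the threshold equals $\binom{n-1}{\ell-1}+1$. Thus the corollary's hypothesis $|\mathcal B|\ge\binom{n-1}{\ell-1}+1$ is precisely the instance of the theorem's hypothesis at $(r,s)=(1,\ell)$.

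Next I would read off the conclusion. For $(r,s)=(1,\ell)$ one has $\binom{n-r}{k-r}=\binom{n-1}{k-1}$ and $\binom{n-r-s}{k-r}=\binom{n-\ell-1}{k-1}$, so the bound supplied by Theorem~\ref{theo3} is exactly $|\mathcal A||\mathcal B|\le\bigl(\binom{n-1}{k-1}-\binom{n-\ell-1}{k-1}\bigr)\bigl(\binom{n-1}{\ell-1}+1\bigr)$, which is the assertion of the corollary. The only admissibility point needing attention is the constraint $2\le s\le\ell$: taking $s=\ell$ is legitimate once $\ell\ge 2$. For $\ell=1$ the constraints $\ell\ge k\ge 1$ force $k=1$, and since $|\mathcal B|\ge\binom{n-1}{0}+1=2$ there are two distinct singletons in $\mathcal B$, which no single singleton in $\mathcal A$ can meet simultaneously; hence $\mathcal A=\emptyset$ and the inequality holds trivially.

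I therefore expect no genuine obstacle in this corollary: all the difficulty is deferred to the proof of Theorem~\ref{theo3} itself, which must establish the estimate uniformly over the admissible range of $(r,s)$. Once that theorem is available, the corollary is a one-line substitution, and the edge case $\ell=1$ is dispatched by the elementary observation above.
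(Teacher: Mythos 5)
Your proof is correct and takes essentially the same route as the paper: the paper obtains Corollary~\ref{ctheo3} as the immediate specialization of Theorem~\ref{theo3}, which is exactly your substitution $(r,s)=(1,\ell)$ giving the threshold $\binom{n-1}{\ell-1}+\binom{n-\ell-1}{0}=\binom{n-1}{\ell-1}+1$ and the stated bound. Your explicit treatment of the edge case $\ell=1$ (excluded by the theorem's constraint $2\le s\le\ell$) is a minor completion the paper leaves implicit.
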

In a similar way to Subsection 4.3, the extremal families in above are unique.
Corollary \ref{ctheo3} infers Theorem \ref{theoFKWX}, and the case $k=\ell$ of Theorem \ref{theo3} infers  the following Frankl and Kupavskii's theorem.
\begin{theo}[Frankl--Kupavskii \cite{FK2017}]\label{theoFK}
Let $n, k$ be positive integers with $n>2k>0$.
If two families $\mathcal{A}, \mathcal{B}\subseteq \binom{[n]}{ k}$ are cross-intersecting with
 $|\mathcal{B}|\ge \binom{n-1}{ k-1}+\binom{n-i }{ k-i+1}$ holds for some $3\le i \le k+1$, then
\begin{equation*}\label{FK2}
 |\mathcal{A}||\mathcal{B}|\leq \left(\binom{n-1}{ k-1}+\binom{n-i}{ k-i+1}\right) \left(\binom{n-1}{ k-1}-\binom{n-i }{ k-1}\right).
 \end{equation*}
\end{theo}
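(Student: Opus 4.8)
The plan is to obtain Theorem \ref{theoFK} as the specialization $\ell=k$ of Theorem \ref{theo3}, with the free parameters of Theorem \ref{theo3} chosen as $r=1$ and $s=i-1$. First I would check that the hypotheses carry over. Since $n>2k$ gives $n\ge k+\ell=2k$, and $\ell=k$ trivially satisfies $\ell\ge k$, the ambient hypotheses of Theorem \ref{theo3} hold. Under the substitution $s=i-1$, the range $3\le i\le k+1$ in Theorem \ref{theoFK} becomes $2\le s\le k=\ell$, which is exactly the admissible range for $s$ in Theorem \ref{theo3}, and $r=1\ge 1$ is admissible. Thus the parameter choice is legitimate.

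Next I would translate the size condition. With $r=1$ and $\ell=k$, the threshold in Theorem \ref{theo3} reads $\sum_{1\le j\le 1}\binom{n-j}{k-1}+\binom{n-1-s}{k-s}=\binom{n-1}{k-1}+\binom{n-1-s}{k-s}$. Substituting $s=i-1$ and using $n-1-s=n-i$ together with $k-s=k-i+1$ turns this into $\binom{n-1}{k-1}+\binom{n-i}{k-i+1}$, which is precisely the hypothesis $|\mathcal{B}|\ge\binom{n-1}{k-1}+\binom{n-i}{k-i+1}$ of Theorem \ref{theoFK}. Hence the size assumption of the target theorem is exactly the instance of the size assumption of Theorem \ref{theo3} we are invoking.

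Finally I would read off the bound. The conclusion of Theorem \ref{theo3} with these parameters is $|\mathcal{A}||\mathcal{B}|\le\left(\binom{n-1}{k-1}-\binom{n-1-s}{k-1}\right)\left(\binom{n-1}{k-1}+\binom{n-1-s}{k-s}\right)$, and the two identities $\binom{n-1-s}{k-1}=\binom{n-i}{k-1}$ and $\binom{n-1-s}{k-s}=\binom{n-i}{k-i+1}$ convert the right-hand side into $\left(\binom{n-1}{k-1}-\binom{n-i}{k-1}\right)\left(\binom{n-1}{k-1}+\binom{n-i}{k-i+1}\right)$, which agrees with the claimed bound up to the order of the two factors. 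Every step here is a direct substitution, so there is essentially no analytic obstacle; the only point that needs care is confirming that the index ranges match, namely that $s=i-1$ carries $3\le i\le k+1$ bijectively onto $2\le s\le k$, which it does. (A self-contained proof via Lemma \ref{lem1} and the Kruskal--Katona machinery is of course also possible, but the reduction to Theorem \ref{theo3} is by far the cleaner route.)
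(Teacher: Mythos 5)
Your reduction is exactly the paper's own derivation: the authors prove Theorem \ref{theoFK} simply by invoking the case $k=\ell$ of Theorem \ref{theo3}, and your choice of parameters ($r=1$, $s=i-1$, so that $3\le i\le k+1$ maps onto $2\le s\le\ell=k$) is precisely the instance needed, with the threshold and the bound matching after the substitutions $\binom{n-1-s}{k-s}=\binom{n-i}{k-i+1}$ and $\binom{n-1-s}{k-1}=\binom{n-i}{k-1}$ as you verify. Your write-up just makes explicit the parameter bookkeeping that the paper leaves implicit.
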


\subsection{The proof of Proposition \ref{prop1}--\ref{prop5}}
{\em Proof of Proposition \ref{prop1}}\:\:
%Let $\mathcal A\subseteq\binom{[n]}{k}$ and $\mathcal B\subseteq\binom{[n]}{\ell}$ be two cross-intersecting families.
It is enough to consider $\cal{A}$ and $\cal{B}$ satisfying (C1) and (C2). Suppose that ${\binom{n-1}{k-1}}+{\binom{n-t-2}{k-t-1}} \le |\mathcal{A}| < {\binom{n-1}{k-1}}+{\binom{n-t-1}{k-t}}$ for some $2\leq t\leq s-1$.
Recall the definitions of $\mathcal{F}^{n,u,r,s}$ and $\mathcal{F}_{n,u,r,s}$ in Section 2.
From the assumptions (C1) and (C2) we have
\[
\mathcal{F}^{n,k,1,t+1} \subseteq \mathcal{A} \subseteq \mathcal{F}^{n,k,1,t}\:\: {\rm and}\:\:
\mathcal{F}_{n,\ell,1,t} \subseteq \mathcal{B} \subseteq \mathcal{F}_{n,\ell,1,t+1}.\]
Clearly, the bipartite  graph $KG(\mathcal{F}^{n,k,1,t} \setminus \mathcal{F}^{n,k,1,t+1},\mathcal{F}_{n,\ell,1,t+1} \setminus \mathcal{F}_{n,\ell,1,t})$ is regular and we have
\[
|\mathcal{F}^{n,k,1,t} \setminus \mathcal{F}^{n,k,1,t+1}|={\binom{n-t-2}{k-t}}\:\: {\rm and}\:\:|\mathcal{F}_{n,\ell,1,t+1} \setminus \mathcal{F}_{n,\ell,1,t}|={\binom{n-t-2}{\ell-2}}.
 \]
 It follows from Result \ref{factbipartite} that  $\frac{|N(\mathcal A\backslash \mathcal{F}^{n,k,1,t})|}{|\mathcal A\backslash\mathcal{F}^{n,k,1,t}|}\ge \frac{|\mathcal{F}_{n,\ell,1,t+1} \setminus \mathcal{F}_{n,\ell,1,t}|}{|\mathcal{F}^{n,k,1,t} \setminus \mathcal{F}^{n,k,1,t+1}|}={\frac{{\binom{n-t-1}{\ell-2}}}{{\binom{n-t-1}{k-t+1}}}}$.
 We may assume that $\cal A \setminus \mathcal{F}_{n,\ell,1,t} \neq \emptyset$, since otherwise $\cal A = \mathcal{F}_{n,\ell,1,t}$ and $\cal B= \mathcal{F}_{n,\ell,1,t+1}$, and hence the equation in Proposition \ref{prop1} holds.
To obtain $|\mathcal{A}||\mathcal{B}|<|\mathcal{F}^{n,k,1,t+1}||\mathcal{F}_{n,\ell,1,t+1}|,$
 by inequality (\ref{baseine}) it suffices to prove that
${\frac{|\mathcal{F}_{n,\ell,1,t+1}|}{|\mathcal{F}^{n,k,1,t+1}|}} < {\frac{{\binom{n-t-1}{\ell-2}}}{{\binom{n-t-1}{k-t+1}}}}.$ We first consider the case $t\ge 4$.
Note that $|\mathcal{F}^{n,k,1,t+1}|>{\binom{n-1}{k-1}}$
and
$|\mathcal{F}_{n,\ell,1,t+1}|<{\binom{n-1}{\ell-1}}$ clearly. Therefore
\begin{eqnarray*}
{\frac{|\mathcal{F}_{n,\ell,1,t+1}|{\binom{n-t-1}{k-t+1}}}{|\mathcal{F}^{n,k,1,t+1}|{\binom{n-t-1}{\ell-2}}}}
<{\frac{{\binom{n-1}{\ell-1}}{\binom{n-t-1}{k-t+1}}}{{\binom{n-1}{k-1}}{\binom{n-t-1}{\ell-2}}}}
= {\frac{(n-k)(n-k-1)(k-1)(k-2)\cdots (k-t+2)}{(n-\ell)(n-\ell-1)\cdots (n-t-\ell+2)(\ell-1)}}\le 1,
\end{eqnarray*}
where the last inequality follows from the following three facts: (1) $(n-k)(k-2)\le (n-\ell)(\ell-1)$; (2) $(n-k-1)(k-1)\le (n-\ell-1)(n-\ell-2)$;
(3) $k-i\le n-\ell-i$ for all $3\le i \le t-2$.

 %{\bf Claim 2.} ${\frac{|\mathcal{F}_{n,\ell,1,4}|}{|\mathcal{F}^{n,k,1,4}|}}<{\frac{{\binom{n-4}{\ell-2}}}{{\binom{n-4}{k-2}}}}$.
Now assume that $t=3$.
Note that $$|\mathcal{F}_{n,\ell,1,4}|={\binom{n-1}{\ell-1}}-{\binom{n-4}{\ell-1}}={\binom{n-2}{\ell-2}}+{\binom{n-3}{\ell-2}}+{\binom{n-4}{\ell-2}}
=x{\binom{n-3}{\ell-2}},$$
where $x={\frac{n-2}{n-\ell}}+1+{\frac{n-\ell-1}{n-3}}$.  The assumption $n\geq 2\ell$ implies $x\le {\frac{2\ell-2}{2\ell-\ell}}+1+{\frac{\ell-1}{2\ell-3}}=3.5+{\frac{1}{2(2\ell-3)}}-{\frac{2}{\ell}}$, which means
\begin{eqnarray}\label{inex}x\leq \left\{
          \begin{array}{ll}
            3, & \hbox{if $\ell=2$ or $3$;} \\
            3.5, & \hbox{if $\ell\geq 4$.}
          \end{array}
        \right.
\end{eqnarray}
Therefore we have
\begin{eqnarray*}
{\frac{|\mathcal{F}_{n,\ell,1,4}|\binom{n-4}{k-2}}{|\mathcal{F}^{n,k,1,4}|\binom{n-4}{\ell-2}}}
<{\frac{x\binom{n-3}{\ell-2}{\binom{n-4}{k-2}}}{{\binom{n-1}{k-1}}{\binom{n-4}{\ell-2}}}}
= {\frac{x(n-k)(n-k-1)(k-1)}{(n-1)(n-2)(n-\ell-1)}}.
\end{eqnarray*}
Let $f(k)=(n-k)(n-k-1)(k-1)$.
Since ${\frac{f(k+1)}{f(k)}}={\frac{(n-k-1)k}{(n-k)(k-1)}}>1$,
we have $f(k)\le f(\ell)$ and
\begin{eqnarray*}
{\frac{x(n-k)(n-k-1)(k-1)}{(n-1)(n-2)(n-\ell-1)}}\le {\frac{x(n-\ell)(\ell-1)}{(n-1)(n-2)}}.
\end{eqnarray*}
Let $g(n)={\frac{x(n-\ell)(\ell-1)}{(n-1)(n-2)}}$. The fact $n(n-\ell-2)-(n-2)(n-\ell-1)=n-\ell-2\ge 0$ implies  ${\frac{g(n+1)}{g(n)}}={\frac{(n-2)(n-\ell-1)}{n(n-\ell-2)}} \le 1$.
Hence we have $g(n)\le g(2\ell)={\frac{x\ell(\ell-1)}{(2\ell-1)(2\ell-2)}}={\frac{x\ell}{2(2\ell-1)}}\leq 1$ for all $\ell\geq 2$ by inequality (\ref{inex}).
This completes the proof. \qed
\medskip
\newline
{\em Proof of Proposition \ref{prop2}}\:\:
Denote
\begin{small}
\[\mathcal A_1=\left\{A\in\binom{[n]}{k}:  1\in A \:{\rm or}\: \{2,3\}\subseteq A\right\},\:\: \mathcal B_1=\left\{B\in \binom{[n]}{\ell}:\mbox{$1\in B$ and $\{2,3\}\cap B\neq \emptyset$}\right\},\]
\end{small}
\begin{small}\[\mathcal A_2=\left\{A\in\binom{[n]}{k}:  1\in A \:{\rm or}\: \{2,3\}\subseteq A\:{\rm or}\: \{2,4\}\subseteq A\right\},\:\: \mathcal B_2=\left\{B\in \binom{[n]}{\ell}:1\in B \:{\rm and}\: 2 \in B \:{\rm or}\:
\{3,4\}\subseteq B\right\}.\]\end{small}
We know that
$|\mathcal A_1|=\binom {n-1}{k-1}+\binom {n-3}{k-2}$, $|\mathcal A_2|=\binom {n-1}{k-1}+\binom {n-3}{k-2}+\binom {n-4}{k-2}$, $|\mathcal B_1|=\binom {n-2}{\ell-2}+\binom {n-3}{\ell-2}$, and $|\mathcal B_2|=\binom {n-2}{\ell-2}+\binom {n-4}{\ell-3}.$
From the assumptions (C1) and (C2) we have
$\mathcal A_1 \subseteq \mathcal{A} \subseteq \mathcal A_2$ and
$\mathcal B_2 \subseteq \mathcal{B} \subseteq \mathcal B_1.$
Clearly, $KG(\mathcal A_1 \setminus \mathcal A_2,\mathcal B_2 \setminus \mathcal B_1)$ is regular and we have
\[
|\mathcal A_1 \setminus \mathcal A_2|=\binom{n-4}{k-2}\:\: {\rm and}\:\:|\mathcal B_2 \setminus \mathcal B_1|=\binom{n-4}{\ell-2}.
 \]
Similarly,  we may assume that $\cal A \setminus \cal A_1 \neq \emptyset$.
To obtain $|\mathcal{A}||\mathcal{B}|<|\mathcal A_1||\mathcal B_1|,$
 by inequality (\ref{baseine}) it suffices to prove that
$ \frac{|\mathcal B_1|}{|\mathcal A_1|} < \frac{\binom{n-4}{\ell-2}}{\binom{n-4}{k-2}}.$
However,
\[\frac{|\mathcal B_1|}{|\mathcal A_1|}=\frac{\binom {n-2}{\ell-2}+\binom {n-3}{\ell-2}}{\binom {n-1}{k-1}+\binom {n-3}{k-2}}<\frac{\binom{n-2}{\ell-2}+2\binom{n-4}{\ell-2}}{\binom{n-2}{k-1}+2\binom{n-3}{k-2}}<\frac{\binom{n-4}{\ell-2}}{\binom{n-4}{k-2}},\]
where the last inequality follows from $\frac{\binom{n-2}{\ell-2}}{\binom{n-2}{k-1}}=\frac{(k-1)(n-k-1)}{(n-\ell)(n-\ell-1)}\frac{\binom{n-4}{\ell-2}}{\binom{n-4}{k-2}}< \frac{\binom{n-4}{\ell-2}}{\binom{n-4}{k-2}}$ and $\frac{\binom{n-4}{\ell-2}}{\binom{n-3}{k-2}}<\frac{\binom{n-4}{\ell-2}}{\binom{n-4}{k-2}}$.
This completes the proof. \qed
\medskip
\newline
{\em Proof of Proposition \ref{prop3}}\:\:
Denote
\begin{small}\[\mathcal A_3=\left\{A\in\binom{[n]}{k}:  1\in A \:{\rm or}\: \{2,i\}\subseteq A\:{\rm for \: some}\: i\in [3,5]\right\},\:\: \mathcal B_3=\left\{B\in \binom{[n]}{\ell}:1\in B \:{\rm and}\: 2 \in B \:{\rm or}\:
[3,5]\subseteq B\right\}.\]\end{small}
We know that
$|\mathcal A_3|={\binom{n-1}{k-1}}+\sum_{i=3}^5{\binom{n-i}{k-2}}$, $|\mathcal B_3|=\binom {n-2}{\ell-2}+\binom {n-5}{\ell-4}.$
Recall the definitions of $\cal A_2$ and $\cal B_2$ in the proof of Proposition \ref{prop2}.
From the assumptions (C1) and (C2) we have
$\mathcal A_2 \subseteq \mathcal{A} \subseteq \mathcal A_3$ and
$\mathcal B_3 \subseteq \mathcal{B} \subseteq \mathcal B_2.$
Clearly, $KG(\mathcal A_2 \setminus \mathcal A_3,\mathcal B_3 \setminus \mathcal B_2)$ is regular and we have
\[
|\mathcal A_2 \setminus \mathcal A_3|=\binom{n-5}{k-2}\:\: {\rm and}\:\:|\mathcal B_3 \setminus \mathcal B_2|=\binom{n-5}{\ell-3}.
 \]
 Similarly,  we may assume that $\cal A \setminus \cal A_2 \neq \emptyset$.
For  $n \le \ell^2$,
to obtain $|\mathcal{A}||\mathcal{B}|\le |\mathcal A_2||\mathcal B_2|,$
 it suffices to prove that
\begin{eqnarray}\label{iprop3}
\frac{{\binom{n-2}{\ell-2}}+{\binom{n-4}{\ell-3}}}{{\binom{n-1}{k-1}}+\binom{n-3}{k-2}+\binom{n-4}{k-2}}\le \frac{{\binom{n-5}{\ell-3}}}{{\binom{n-5}{k-2}}}.
\end{eqnarray}
We will prove that
$\frac{{\binom{n-4}{\ell-3}}}{\binom{n-3}{k-2}+\binom{n-4}{k-2}}<\frac{{\binom{n-5}{\ell-3}}}{{\binom{n-5}{k-2}}}$ and $ \frac{{\binom{n-2}{\ell-2}}}{{\binom{n-1}{k-1}}}\le \frac{{\binom{n-5}{\ell-3}}}{{\binom{n-5}{k-2}}},$ then inequality (\ref{iprop3}) follows. Firstly,
\[\frac{{\binom{n-4}{\ell-3}}}{\binom{n-3}{k-2}+\binom{n-4}{k-2}}< \frac{{\binom{n-4}{\ell-3}}}{2\binom{n-4}{k-2}} =\frac{n-k-2}{2(n-\ell-1)} \frac{{\binom{n-5}{\ell-3}}}{{\binom{n-5}{k-2}}}< \frac{{\binom{n-5}{\ell-3}}}{{\binom{n-5}{k-2}}}.\]
Secondly, as $\frac{k-1}{\ell-2}<\frac{k}{\ell-1}$,  $\frac{n-k-1}{n-1}<\frac{n-k}{n}$,  $\frac{n-k-2}{n-\ell-1}<\frac{n-k}{n-\ell}$, we obtain that
\begin{small}
\begin{eqnarray*}
\frac{\binom{n-2}{\ell-2}/\binom{n-1}{k-1}}{\binom{n-5}{\ell-3}/\binom{n-5}{k-2}}= \frac{(k-1)(n-k)(n-k-1)(n-k-2)}{(\ell-2)(n-1)(n-\ell)(n-\ell-1)}
<   \frac{k(n-k)^3}{(\ell-1)n(n-\ell)^2}:=\frac{\ell}{\ell-1}\frac{x(y-x)^3}{y(y-1)^2},
\end{eqnarray*}
\end{small}
\newline
where $k=x\ell$ and $n=y\ell$ with $0<x\le 1$ and $2\le y\le \ell$.
For $y\ge 4$, as $\frac{ \partial (x(y-x)^3)}{ \partial x}=(y-x)^2(y-4x)\ge 0$, we have
\[\frac{\ell}{\ell-1}\frac{x(y-x)^3}{y(y-1)^2}\le \frac{\ell}{\ell-1} \frac{y-1}{y} \le 1.\]
For $y\le 4$,
it is easy to see that
\[\frac{\ell}{\ell-1}\frac{x(y-x)^3}{y(y-1)^2}\le \frac{7}{6}\frac{\frac y 4(y-\frac y 4)^3}{y(y-1)^2}\le \frac{7}{6}\frac{\frac 4 4(4-\frac 4 4)^3}{4(4-1)^2}=\frac 7 8<1.\]
This completes the proof. \qed
\medskip
\newline
{\em Proof of Proposition \ref{prop4}}\:\:
As $|\mathcal{A}|\ge {\binom{n-1}{k-1}}+\sum_{i=3}^{\ell+1}{\binom{n-i}{k-2}}$, from the assumptions (C1) and (C2) we have
$\mathcal{A} \subseteq \left\{A\in {\binom{[n]}{k}}: 1\in A, \: {\rm or } \:   \{2,i\} \subseteq A \: {\rm for \: all } \: i\in [3,\ell+1] \right\}, $
and hence $\mathcal{B} \subseteq \left\{B\in {\binom{[n]}{\ell}}: [2] \subseteq B \right\}$.
 Thus $|\cal B|\le {\binom{n-2}{\ell-2}}$, which combines the condition $|\cal A|\le {\binom{n-1}{k-1}}+{\binom{n-2}{k-1}}$, we obtain
  $|\mathcal{A}||\mathcal{B}|\le \left({\binom{n-1}{k-1}}+{\binom{n-2}{k-1}}\right) {\binom{n-2}{\ell-2}}.$
This completes the proof. \qed
\medskip
\newline
{\em Proof of Proposition \ref{prop5}}\:\:
From the assumptions (C1) and (C2), if $|\mathcal{A}|> {\binom{n-1}{k-1}}+{\binom{n-2}{k-1}}+\cdots+{\binom{n-\ell}{k-1}}$  then $\cal B=\emptyset$, and we are done.
So we assume that ${\binom{n-1}{k-1}}+{\binom{n-2}{k-1}}+\cdots+{\binom{n-r}{k-1}}\le |\mathcal A|\le {\binom{n-1}{k-1}}+{\binom{n-2}{k-1}}+\cdots+{\binom{n-r-1}{k-1}}$ for some $2\le r\le \ell$.
For convenience, we denote \[
\mathcal{F}^s=\left\{ A\in \binom{[n]}{k}: A\cap [s]\neq \emptyset  \right\} {\rm \:\: and \:\:}
\mathcal{F}_s=\left\{ B\in \binom{[n]}{\ell}: [s] \subseteq B  \right\}.
\]
We know that
$|\mathcal F^s|=\binom {n}{k}-\binom {n-s}{k}$, $|\mathcal F_s|=\binom {n-s}{\ell-s}.$
From the assumptions (C1) and (C2) we have
$\mathcal F^r \subseteq \mathcal{A} \subseteq \mathcal F^{r+1}$ and
$\mathcal F_{r+1} \subseteq \mathcal{B} \subseteq \mathcal F_r.$
Clearly, $KG(\mathcal F^{r+1} \setminus \mathcal F^{r}, \mathcal F_r \setminus \mathcal F_{r+1})$ is regular and we have
\[
|\mathcal F^{r+1} \setminus \mathcal F^{r}|=\binom{n-r-1}{k-1}\:\: {\rm and}\:\:| \mathcal F_r \setminus \mathcal F_{r+1}|={\binom{n-r-1}{\ell - r}}.
 \]
Similarly, we may assume that $\cal A \setminus \mathcal F^r \neq \emptyset$.
To obtain $|\mathcal{A}||\mathcal{B}| < |\mathcal F^r||\mathcal F_r|$,
 by inequality (\ref{baseine}) it suffices to prove that
${\frac{|\mathcal{F}_{r}|}{|\mathcal{F}^{r}|}} < \frac{{\binom{n-r-1}{\ell - r}} }{\binom{n-r-1}{k-1}},$
which follows from
\[{\frac{|\mathcal{F}_{r}|}{|\mathcal{F}^{r}|}}
=\frac{\binom{n-r}{\ell-r}}{\sum_{1\leq i\leq r}\binom{n-i}{k-1}}
<\frac{\binom{n-r-1}{\ell-r}+\binom{n-r-1}{\ell-r}}{\binom{n-2}{k-1}+\binom{n-2}{k-1}}
<\frac{{\binom{n-r-1}{\ell - r}} }{\binom{n-r-1}{k-1}}.\]
This completes the proof. \qed

\subsection{Proof of Theorem \ref{theo3}}

At first, we introduce some recurrence relations and inequalities on binomial coefficients.

\begin{result}
Let $m$, $j$, $s$ be positive integers with $m\geq j+ s$. Then \begin{eqnarray}\label{beq2}\binom{m}{j+1}=\binom{m-1}{j}+\binom{m-2}{j}+\cdots+\binom{m-s}{j}+\binom{m-s}{j+1},\end{eqnarray}
and
\begin{eqnarray}\label{beq0}\binom{m}{j}=\binom{m-1}{j}+\binom{m-2}{j-1}+\cdots+\binom{m-s}{j-s+1}+\binom{m-s}{j-s}.\end{eqnarray}
In particular, \begin{eqnarray}\label{beq1}\binom{m}{j}=\binom{m-1}{j}+\binom{m-2}{j-1}+\cdots+\binom{m-j-1}{0}.\end{eqnarray}
\end{result}
\begin{result}
Let $n$,$k$,$\ell$,$s$ be positive integers with $n\geq k+\ell$ and $s\leq \ell$. Then \begin{eqnarray}\label{ineq24}\frac{\binom{n-2-s}{k-2}}{\binom{n-2-s}{\ell-s}}> \frac{\binom{n-3-s}{k-3}}{\binom{n-3-s}{\ell-s}}>\cdots> \frac{\binom{n-k-s}{0}}{\binom{n-k-s}{\ell-s}}\end{eqnarray}
and \begin{eqnarray}\label{eq35}\frac{\binom{n-2}{k-2}}{\binom{n-2}{\ell-1}}\leq\frac{\binom{n-3}{k-2}}{\binom{n-3}{\ell-2}}\leq\cdots\leq\frac{\binom{n-s}{k-2}}{\binom{n-s}{\ell-s+1}}.\end{eqnarray}
\end{result}

\begin{lemma}\label{lemmatheo2}
Let $n, k, \ell,r,s,t$  be positive integers with $\ell \ge k$, $n\geq k+\ell$, $1\leq r\leq k-1$, $0\leq t\leq k-r-1$,  and $2\leq s\leq \ell$.
Then
\begin{eqnarray}\label{ineq21}\frac{{\binom{n-r}{k-r}}-{\binom{n-r-s}{k-r}}}{{\sum_{1\leq i\leq r}\binom{n-i}{\ell-1}}+\sum_{0\leq j\leq t}{\binom{n-r-s-j}{\ell-s}}}
<\frac{{\binom{n-r-s-1}{k-r-1}}+{\binom{n-r-s-2}{k-r-2}}+\cdots+{\binom{n-r-s-t-1}{k-r-t-1}}}{{\binom{n-r-s-1}{\ell-s}}+{\binom{n-r-s-2}{\ell-s}}+\cdots+{\binom{n-r-s-t-1}{\ell-s}}}.\end{eqnarray}
\end{lemma}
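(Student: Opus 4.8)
The plan is to strip \eqref*{ineq21} down to a single scalar inequality that the monotonicity facts (F1), (F2) and the ratio orderings (\ref{ineq24}), (\ref{eq35}) can handle, and then to close by induction on $t$.

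First I would fix $r,s$ and abbreviate $a_j=\binom{n-r-s-j}{k-r-j}$, $b_j=\binom{n-r-s-j}{\ell-s}$, $C=\sum_{1\le i\le r}\binom{n-i}{\ell-1}$ and $P=\binom{n-r}{k-r}-\binom{n-r-s}{k-r}$. By identity (\ref{beq2}) the left numerator is $P=\sum_{i=1}^{s}\binom{n-r-i}{k-r-1}$. Writing $R_t=\sum_{j=1}^{t+1}a_j$ and $S_t=\sum_{j=1}^{t+1}b_j$ for the right-hand numerator and denominator, and noting $\sum_{0\le j\le t}b_j=b_0+S_{t-1}$, the left denominator becomes $Q_t=D+S_{t-1}$ with $D=C+b_0$ independent of $t$ (and $S_{-1}=0$, so $Q_0=D$). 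Thus (\ref{ineq21}) is exactly $P/Q_t<R_t/S_t$, i.e. the claim $(\star_t)\colon PS_t<R_tQ_t$. Before inducting I would record, using the ordering (\ref{ineq24}), that the ratios $\rho_j=a_j/b_j$ are strictly decreasing, so $R_t/S_t$ is a weighted mediant of $\rho_1>\cdots>\rho_{t+1}$ and is itself decreasing in $t$; this explains why the inequality is tightest for large $t$.

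Next I would prove $(\star_t)$ by induction on $t$. The base case $(\star_0)$ reads $P/D<\rho_1$. Since $D\ge C\ge\binom{n-1}{\ell-1}$ while $P\le\binom{n-r}{k-r}\le\binom{n-1}{k-1}$, and since $\ell\ge k$ together with $n\ge k+\ell$ gives $\binom{n-1}{k-1}\le\binom{n-1}{\ell-1}$, it suffices to show that the ``global'' ratio $\binom{n-1}{k-1}/\binom{n-1}{\ell-1}$ is dominated by the ``local'' ratio $\rho_1=\binom{n-r-s-1}{k-r-1}/\binom{n-r-s-1}{\ell-s}$; this comparison is produced by chaining (F1) through (\ref{ineq24}) and (\ref{eq35}), the extra summand $b_0$ of $D$ and the extra summands of $C$ (when $r>1$) only supplying slack. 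For the inductive step, using $R_t=R_{t-1}+a_{t+1}$ and $Q_t=Q_{t-1}+b_t$, I would compute the two increments $PS_t-PS_{t-1}=P\,b_{t+1}$ and $R_tQ_t-R_{t-1}Q_{t-1}=R_{t-1}b_t+a_{t+1}Q_t$; hence, granting $(\star_{t-1})$, the step $(\star_t)$ follows at once from the single \emph{marginal inequality}
\[
(\dagger)\colon\qquad P\,b_{t+1}\ \le\ R_{t-1}\,b_t+a_{t+1}\,Q_t .
\]

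The hard part will be $(\dagger)$, and it is exactly where $\ell\ge k$ and $n\ge k+\ell$ must be used in earnest. The difficulty is that neither term on the right dominates over the whole range: for small $t$ the factor $a_{t+1}Q_t$ carries the inequality, because $Q_t$ contains the large mass $C=\sum_i\binom{n-i}{\ell-1}$, whereas as $t\to k-r-1$ the ratio $\rho_{t+1}=a_{t+1}/b_{t+1}$ becomes tiny and one must instead lean on the accumulated term $R_{t-1}b_t$. Accordingly I would split $Q_t=C+(b_0+S_{t-1})$ and bound $P\,b_{t+1}$ against $a_{t+1}C$ on one side and against $R_{t-1}b_t+a_{t+1}(b_0+S_{t-1})$ on the other, in each piece trading the global comparison $P/C$ (resp.\ $P/R_{t-1}$) for the relevant local ratio via (\ref{ineq24}), (\ref{eq35}) and repeated use of (F1), and finally assembling the two estimates with (F2). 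Checking that these two regimes overlap for every admissible $t$ under the constraints $2\le s\le\ell$ and $0\le t\le k-r-1$ is the main obstacle; once $(\dagger)$ is secured, the induction closes immediately and \eqref*{ineq21} follows.
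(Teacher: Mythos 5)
Your induction framework is set up cleanly (the algebra $R_tQ_t-R_{t-1}Q_{t-1}=R_{t-1}b_t+a_{t+1}Q_t$ is correct), but both pillars of the induction are false as stated. For the base case, bounding $P\le\binom{n-r}{k-r}\le\binom{n-1}{k-1}$ discards the subtracted term $\binom{n-r-s}{k-r}$, and the comparison you then declare sufficient --- $\binom{n-1}{k-1}/\binom{n-1}{\ell-1}\le\rho_1$ --- is simply not true: take $n=20$, $k=5$, $\ell=6$, $r=1$, $s=2$ (all hypotheses of the lemma hold); then $\binom{19}{4}/\binom{19}{5}=1/3$ while $\rho_1=\binom{16}{3}/\binom{16}{4}=4/13<1/3$. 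No chaining of (F1), (\ref{ineq24}), (\ref{eq35}) can produce a false inequality. The base case is nevertheless true here ($P=1496$, $Q_0=14008$) precisely because $P$ is far smaller than $\binom{n-1}{k-1}=3876$: the subtraction must be retained, e.g.\ through the cascade expansion $P=\sum_{i=1}^{s}\binom{n-r-i}{k-r-1}$ that you yourself wrote down and then abandoned.

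The inductive step fails more decisively: your marginal inequality $(\dagger)$ is false for admissible parameters, so it cannot be ``secured'' by any regime-splitting. With the same $n=20$, $k=5$, $\ell=6$, $r=1$, $s=2$ and $t=2$ (allowed since $k-r-1=3$): $a_1=560$, $a_2=105$, $a_3=14$, $b_1=1820$, $b_2=1365$, $b_3=1001$, $C=\binom{19}{5}=11628$, $b_0=\binom{17}{4}=2380$, hence $Q_2=17193$, and $Pb_3=1496\cdot1001=1497496$ while $R_1b_2+a_3Q_2=665\cdot1365+14\cdot17193=1148427$. Yet $(\star_2)$ itself holds ($1496/17193<679/4186$), because slack accumulated at $t=1$ absorbs the deficit. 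So $(\dagger)$ is strictly stronger than the lemma and genuinely fails; an increment-by-increment induction can only work if it carries quantitative slack forward, which your scheme does not do. The paper avoids this trap entirely: for $r=1$ it expands numerator and denominator exactly via the cascade identities (\ref{beq2}) and (\ref{beq0}), compares the resulting blocks against the target mediant using the orderings (\ref{ineq24}) and (\ref{eq35}) combined through (F2) --- a global blockwise comparison rather than a marginal one --- and then reduces $r>1$ to $r=1$ by the substitution $n'=n-r+1$, $k'=k-r+1$. To repair your write-up you would need to replace $(\dagger)$ by a slack-tracking statement or adopt the paper's blockwise comparison of the full expanded sums.
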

\begin{proof}
First we consider the case $r=1$.
By (\ref{beq2}), (\ref{beq1}) and (\ref{ineq24}) we have
\begin{eqnarray}
\frac{\binom{n-1-s}{k-2}}{\binom{n-1-s}{\ell-s+1}}&=&\frac{\binom{n-2-s}{k-2}+\binom{n-3-s}{k-3}+\cdots+\binom{n-k-s}{0}}{
\binom{n-2-s}{\ell-s}+\binom{n-3-s}{\ell-s}+\cdots+\binom{n-k-s}{\ell-s}+\binom{n-k-s}{\ell-s+1}}\nonumber\\
&<&\frac{\binom{n-2-s}{k-2}+\cdots+\binom{n-k-s}{0}}{
\binom{n-2-s}{\ell-s}+\cdots+\binom{n-k-s}{\ell-s}}<\cdots \nonumber\\
&<&\frac{\binom{n-2-s}{k-2}+\cdots+\binom{n-t-2-s}{k-t-2}}{
\binom{n-2-s}{\ell-s}+\cdots+\binom{n-t-2-s}{\ell-s}}<\cdots\nonumber\\
&<&\frac{\binom{n-2-s}{k-2}}{
\binom{n-2-s}{\ell-s}}.\label{beq3}
\end{eqnarray}
Note that $(k+2)+(\ell-s+1)=k+\ell-s-1\leq n-s-1$.
If $k-2\ge \ell-s+1$, then  $\frac{\binom{n-1-s}{k-2}}{\binom{n-1-s}{\ell-s+1}}\geq 1>\frac{\binom{n-1}{k-1}}{\binom{n-1}{\ell-1}}$.  So in this case (\ref{ineq21}) can be obtained by (\ref{beq3}). Next we consider the case  $k-2\le \ell-s$. It is easy to verify that
%\begin{eqnarray*}
%\label{ineq22}
$\frac{\binom{n-s}{k-2}}{\binom{n-s}{\ell-s+1}}
\leq\frac{\binom{n-s-1}{k-2}}{\binom{n-s-1}{\ell-s+1}}.$
%\end{eqnarray*}
By (\ref{beq2}) and (\ref{beq0}), we have
\[\frac{{\binom{n-1}{k-1}}-{\binom{n-1-s}{k-1}}}{{\binom{n-1}{\ell-1}}+\sum_{0\leq j\leq t}{\binom{n-s-j-1}{\ell-s}}}
=\frac{\sum_{i=2}^s{\binom{n-i}{k-2}}+\binom{n-s-1}{k-2}}{\sum_{i=2}^s\binom{n-i}{\ell-i+1}+\binom{n-s-1}{\ell-s}+\binom{n-s-1}{\ell-s-1}+\sum_{0\leq j\leq t}{\binom{n-s-j-1}{\ell-s}}}.\]
Applying ({\ref{eq35}}),
\begin{eqnarray*}
\label{ineq23}
\frac{\sum_{i=2}^s{\binom{n-i}{k-2}}}
{\sum_{i=2}^s\binom{n-i}{\ell-i+1}}
\leq \frac{\binom{n-s-1}{k-2}}{\binom{n-s-1}{\ell-s+1}}<\frac{\binom{n-2-s}{k-2}+\cdots+\binom{n-2-t-s}{k-t-2}}{
\binom{n-2-s}{\ell-s}+\cdots+\binom{n-2-t-s}{\ell-s}}.
\end{eqnarray*}
Then to complete the proof we only need to verify
the following inequality
$$\frac{\binom{n-s-1}{k-2}}{\binom{n-s-1}{\ell-s}+\binom{n-s-1}{\ell-s-1}+\sum_{0\leq j\leq t}{\binom{n-s-j-1}{\ell-s}}}<\frac{\binom{n-2-s}{k-2}+\cdots+\binom{n-2-t-s}{k-t-2}}{
\binom{n-2-s}{\ell-s}+\cdots+\binom{n-2-t-s}{\ell-s}}.$$
However, combining with (\ref{beq0}) and (\ref{ineq24}), we can derive
 \begin{small}
\begin{eqnarray*}
\frac{\binom{n-s-1}{k-2}}{\binom{n-s-1}{\ell-s}+\binom{n-s-1}{\ell-s-1}+\sum_{0\leq j\leq t}{\binom{n-s-j-1}{\ell-s}}}
&=&\frac{\binom{n-s-2}{k-2}+\cdots+\binom{n-s-t-1}{k-t-1}+\binom{n-s-t-2}{k-t-2}+\binom{n-s-t-2}{k-t-3}}{\binom{n-s-2}{\ell-s}+\cdots
+\binom{n-s-t-1}{\ell-s}+\binom{n-s-1}{\ell-s}+\binom{n-s-1}{\ell-s}+\binom{n-s-1}{\ell-s-1}}\\
&<&\frac{\binom{n-s-2}{k-2}+\cdots+\binom{n-s-t-1}{k-t-1}+\binom{n-s-t-2}{k-t-2}+\binom{n-s-t-2}{k-t-3}}{\binom{n-s-2}{\ell-s}+\cdots
+\binom{n-s-t-1}{\ell-s}+\binom{n-s-1}{\ell-s}+\binom{n-s-1}{\ell-s}} \\
&<&\frac{\binom{n-2-s}{k-2}+\cdots+\binom{n-2-t-s}{k-t-2}}{
\binom{n-2-s}{\ell-s}+\cdots+\binom{n-2-t-s}{\ell-s}}.
\end{eqnarray*}
\end{small}
Therefore, the result holds when $r=1$.

If $1<r\leq k-1$, set $n'=n-r+1$ and $k'=k-r+1$, then $n'\geq k'+\ell$, $0\le t \le k'-2$, $\ell>k'$ and
 \begin{footnotesize}
\begin{eqnarray*}
\frac{{\binom{n-r}{k-r}}-{\binom{n-r-s}{k-r}}}{{\sum_{1\leq i\leq r}\binom{n-i}{\ell-1}}+\sum_{0\leq j\leq t}{\binom{n-r-s-j}{\ell-s}}}
<\frac{{\binom{n'-1}{k'-1}}-{\binom{n'-1-s}{k'-1}}}{{\binom{n'-1}{\ell-1}}+\sum_{0\leq j\leq t}{\binom{n'-1-s-j}{\ell-s}}}<\frac{{\binom{n'-2-s}{k-r-1}}+{\binom{n'-3-s}{k-r-2}}+\cdots+{\binom{n'-t-2-s}{k-r-t-1}}}{{\binom{n'-2-s}{\ell-s}}+{\binom{n'-3-s}{\ell-s}}
+\cdots+{\binom{n'-t-2-s}{\ell-s}}}
\end{eqnarray*}
\end{footnotesize}
holds by the case $r=1$.
 Therefore we complete the proof.
\qed
\end{proof}
\bigskip
\newline
{\em Proof of Theorem \ref{theo3}.}
Let $\mathcal A\subseteq \binom{[n]}{k}$ and $\mathcal B\subseteq\binom{[n]}{\ell}$ be two maximal cross intersecting families with $|\mathcal B|>\binom{n-1}{\ell-1}$.
If $|\mathcal B|>\sum_{1\leq i\leq k}\binom{n-i}{\ell-1}$, then $\mathcal A=\emptyset$. So we assume
 \begin{small}
\begin{eqnarray*}
\sum_{1\leq i\leq r}\binom{n-i}{\ell-1}+\sum_{j=0}^{t-1} \binom{n-r-s-j}{\ell-s}< |\mathcal B|\leq \sum_{1\leq i\leq r}\binom{n-i}{\ell-1}+ \sum_{j=0}^{t} \binom{n-r-s-j}{\ell-s}
\end{eqnarray*}
\end{small}
for some $1\leq r\le k-1$, $1\le s\leq \ell$ and $1\leq t\leq k-r-1$.
Recall the definitions of $\mathcal{F}^{n,u,r,s}$, $\mathcal{F}_{n,u,r,s}$, $\mathcal{F}^{n,k,r,s,i}$ and $\mathcal{F}_{n,k,r,s,i}$ in Section 2.
Then $\mathcal A=\mathcal{F}_{n,u,r,s}\setminus(\mathcal{F}_{n,k,r,s,1}\cup\cdots\cup\mathcal{F}_{n,k,r,s,t-1}\cup N(\mathcal X_0))$ and $\mathcal B=\mathcal F^{n,u,r,s}\cup \mathcal{F}^{n,\ell,r,s-1,1}\cup\cdots\cup\mathcal{F}^{n,\ell,r,s-1,t-1}\cup\mathcal X_0$
for some $\mathcal X_0\subseteq \mathcal{F}^{n,\ell,r,s-1,t}$ and $N(\mathcal X_0)=\{A\in\mathcal{F}_{n,k,r,s,t}: \mbox{$A\cap B=\emptyset$ for some $B\in\mathcal X_0$}\}$. As the graph $KG(\mathcal{F}_{n,k,r,s,t},\mathcal{F}^{n,\ell,r,s-1,t})$  is regular,  $\frac{|N(\mathcal X)|}{|\mathcal X|}\geq \frac{|\mathcal{F}_{n,k,r,s,t}|}{|\mathcal{F}^{n,\ell,r,s-1,t}|}$ for all $\mathcal X\subseteq \mathcal{F}^{n,\ell,r,s-1,t}$.
Then, by Lemma \ref{lemmatheo2}, we have
\begin{eqnarray*}
\frac{|\mathcal{F}_{n,u,r,s}|}{|\mathcal B|}< \frac{\binom{n-r}{k-r}-\binom{n-r-s}{k-r}}{ \sum_{1\leq i\leq r}\binom{n-i}{\ell-1}+\sum_{j=0}^{t-1} \binom{n-r-s-j}{\ell-s}}<
\frac{{\binom{n-r-s-1}{k-r-1}}+{\binom{n-r-s-2}{k-r-2}}+\cdots+{\binom{n-r-s-t}{k-r-t}}}{{\binom{n-r-s-1}{\ell-s}}+{\binom{n-r-s-2}{\ell-s}}+\cdots+{\binom{n-r-s-t}{\ell-s}}}.
\end{eqnarray*}
However, $n\geq k+\ell$ implies $n-r-s-1\geq (k-r-1)+(\ell-s)$, and then  inequalities in (\ref{ineq24}) implies
\[\frac{\binom{n-r-s-1}{k-r-1}}{\binom{n-r-s-1}{\ell-s}}\geq\cdots\geq\frac{\binom{n-r-s-t+1}{k-r-t}}{\binom{n-r-s-t+1}{\ell-s}}\geq \frac{\binom{n-r-s-t+1}{k-r-t}}{\binom{n-r-s-t}{\ell-s}}.\]
Hence we can deduce
\begin{eqnarray*}
\frac{{\binom{n-r-s-1}{k-r-1}}+\cdots+{\binom{n-r-s-t}{k-r-t}}}{{\binom{n-r-s-1}{\ell-s}}+\cdots+{\binom{n-r-s-t}{\ell-s}}}\leq \frac{{\binom{n-r-s-1}{k-r-1}}+\cdots+\binom{n-r-s-t+1}{k-r-t+1}+|N(\mathcal X_0)|}{{\binom{n-r-s-1}{\ell-s}}+\cdots+{\binom{n-r-s-t+1}{\ell-s}}+|\mathcal X_0|}
\end{eqnarray*}
as $\frac{|N(\mathcal X_0)|}{|\mathcal X_0|}\geq \frac{|\mathcal{F}_{n,k,r,s,t}|}{|\mathcal{F}^{n,\ell,r,s-1,t}|}=\frac{\binom{n-r-s-t}{k-r-t}}{\binom{n-r-s-t}{\ell-s}}$ and $|\mathcal X_0|\leq \binom{n-r-s-t}{\ell-s}$.
Thus we obtain
$$\frac{|\mathcal{F}_{n,u,r,s}|}{|\mathcal B|}<\frac {|\mathcal{F}_{n,k,r,s,1}|+...+|\mathcal{F}_{n,k,r,s,t-1}|+|N(\mathcal X_0)|}{|\mathcal{F}^{n,\ell,r,s-1,1}|+...+|\mathcal{F}^{n,\ell,r,s-1,t-1}|+|\mathcal X_0|}= \frac{|\mathcal{F}_{n,u,r,s}\setminus \mathcal A|}{|\mathcal B\setminus \mathcal F^{n,u,r,s}|}= \frac{|N(\mathcal B\setminus \mathcal F^{n,u,r,s})|}{|\mathcal B\setminus \mathcal F^{n,u,r,s}|},$$
and hence
\[
|\mathcal{A}| |\mathcal{B}|\le |\mathcal{F}_{n,u,r,s}||\mathcal F^{n,u,r,s}| = \left(\binom{n-r}{k-r}-\binom{n-r-s}{k-r}\right) \left(\sum_{1\leq i\leq r}\binom{n-i}{\ell-1}+\binom{n-r-s}{\ell-s}\right).
\]
This completes the proof.
\qed

%
%\section{The proof of Theorem \ref{maintheo}}
%We will first prove the two inequalities in Theorem \ref{maintheo}, which is equivalent to
%\begin{equation} \label{equamain}
%|\mathcal{A}| |\mathcal{B}|\le \max \left\{ \left({\binom{n-1}{k-1}}+{\binom{n-2 }{k-1}}\right){\binom{n-2}{\ell-2}}, \left({\binom{n-1}{k-1}}+1\right)\left({\binom{n-1}{\ell-1}}-{\binom{n-k-1}{\ell-1}}\right)\right\}.
%\end{equation}
%In our proof, the hardest part is the case ${\binom{n-1}{k-1}}<|\mathcal{A}|\le {\binom{n-1}{k-1}}+{\binom{n-2}{k-1}}$, which can be treated in the following cases. For the case ${\binom{n-1}{k-1}}<|\mathcal{A}|\le {\binom{n-1}{k-1}}+{\binom{n-3}{k-2}}$, it has been obtained in Theorem \ref{theo2}.
%For the case ${\binom{n-1}{k-1}}+\sum_{i=3}^5{\binom{n-i}{k-2}}<|\mathcal{A}|\le {\binom{n-1}{k-1}}+{\binom{n-2}{k-1}}$, we introduce a polynomial function $\varphi (x)$ as an upper bound of $|\mathcal A||\mathcal B|$, where $x$ satisfies $\binom x {n-\ell-1}=|\cal B|-\binom {n-2}{\ell-2}$.
%However, it is hard to upper bound $\varphi (x)$ when $x$ is close to $n-4$.
%Thus we need some more cumbersome inequalities in the following subsection, whose proofs are posted in Subsection 4.3.

\section{The proof of Theorem \ref{maintheo} and preparations}

In this section, on the one hand, we use Lov\'{a}sz's theorem (Theorem \ref{Lov1979}) to complete the proof of the inequalities in the main theorem.
On the other hand, we use M\"ors' theorem (Theorem \ref{mors1}) to obtain the unique extremal families.
  \begin{proposition} \label{propfunction}
Let $n, k, \ell$  be positive integers with $n\ge  2\ell > 2k>0$.
If two families $\mathcal{A}\subseteq {\binom{[n]}{k}}$ and $\mathcal{B}\subseteq {\binom{[n]}{\ell}}$ are cross-intersecting with
$ {\binom{n-1}{k-1}}+\sum_{i=3}^4{\binom{n-i}{k-2}} \le |\mathcal{A}|\le {\binom{n-1}{k-1}}+\sum_{i=3}^5{\binom{n-i}{k-2}}$ with $n\ge \ell^2$, and $ {\binom{n-1}{k-1}}+\sum_{i=3}^{5}{\binom{n-i}{k-2}} \le |\mathcal{A}|\le {\binom{n-1}{k-1}}+\sum_{i=3}^{\ell+1}{\binom{n-i}{k-2}}$, then
\begin{small}
\begin{equation} \label{equamain}
|\mathcal{A}| |\mathcal{B}|\le \max \left\{ \left({\binom{n-1}{k-1}}+{\binom{n-2 }{k-1}}\right){\binom{n-2}{\ell-2}}, \left({\binom{n-1}{k-1}}+1\right)\left({\binom{n-1}{\ell-1}}-{\binom{n-k-1}{\ell-1}}\right)\right\}.
\end{equation}
\end{small}
 \end{proposition}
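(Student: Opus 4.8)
The plan is to pass from the pair $(\mathcal A,\mathcal B)$ to a one–variable inequality and then analyse a single polynomial. First I would reduce to families satisfying (C1), so that $\mathcal B$ is exactly the family of all $\ell$–sets meeting every member of $\mathcal A$. Writing $\mathcal A^c=\{[n]\setminus A:A\in\mathcal A\}$ for the family of complements, a set $B\in\binom{[n]}{\ell}$ fails to lie in $\mathcal B$ precisely when $B\subseteq [n]\setminus A$ for some $A\in\mathcal A$, i.e.\ when $B\in\Delta_\ell(\mathcal A^c)$. This uses $n\ge 2\ell>2k$, which gives $n-k>\ell$, so that the $\ell$–shadow of the $(n-k)$–uniform family $\mathcal A^c$ is defined. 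Hence
\[
|\mathcal B|=\binom{n}{\ell}-\bigl|\Delta_\ell(\mathcal A^c)\bigr|,\qquad |\mathcal A^c|=|\mathcal A|.
\]

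Next I would relax the shadow term by Lov\'asz's theorem. Writing $|\mathcal A|=\binom{x}{\,n-k\,}$ for the unique real $x\ge n-k$ (legitimate since $|\mathcal A|\ge\binom{n-1}{k-1}=\binom{n-1}{\,n-k\,}$ forces $x\ge n-1$), Theorem \ref{Lov1979} applied to $\mathcal A^c$ with $\ell<n-k$ gives $|\Delta_\ell(\mathcal A^c)|\ge\binom{x}{\ell}$, whence
\[
|\mathcal A||\mathcal B|\le g(x):=\binom{x}{\,n-k\,}\left(\binom{n}{\ell}-\binom{x}{\ell}\right).
\]
This reduces the proposition to the purely analytic claim that $g(x)\le\max\{P_1,P_2\}$ for every $x$ with $\binom{x}{n-k}=|\mathcal A|$ in the prescribed range, where $P_1=\bigl(\binom{n-1}{k-1}+\binom{n-2}{k-1}\bigr)\binom{n-2}{\ell-2}$ and $P_2=\bigl(\binom{n-1}{k-1}+1\bigr)\bigl(\binom{n-1}{\ell-1}-\binom{n-k-1}{\ell-1}\bigr)$ are the two target values; this $g$ is the ``subtle polynomial'' referred to earlier.

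The heart of the argument, and the step I expect to be hardest, is controlling $g$ on the relevant $x$–interval. On $[n-k,n]$ the factor $\binom{x}{n-k}$ increases while $\binom{n}{\ell}-\binom{x}{\ell}$ decreases from $\binom n \ell$ to $0$, so $g$ is unimodal with a single interior maximum, and the danger is precisely that this maximum --- where the Lov\'asz relaxation is loosest --- could exceed $\max\{P_1,P_2\}$. I would therefore examine the logarithmic derivative $g'/g$, show it vanishes exactly once, locate the peak $x^{\ast}$, and prove that the interval determined by the size hypotheses lies entirely on one monotone branch of $g$. It is here that the two sub–cases enter: the hypothesis $n\ge\ell^{2}$ in the first range $\binom{n-1}{k-1}+\sum_{i=3}^4\binom{n-i}{k-2}\le|\mathcal A|\le\binom{n-1}{k-1}+\sum_{i=3}^5\binom{n-i}{k-2}$ is exactly what pushes $x^{\ast}$ outside that range (the complementary case $n<\ell^2$ being handled separately by Proposition \ref{prop3}), forcing $g$ to be monotone there; the second range $\binom{n-1}{k-1}+\sum_{i=3}^5\binom{n-i}{k-2}\le|\mathcal A|\le\binom{n-1}{k-1}+\sum_{i=3}^{\ell+1}\binom{n-i}{k-2}$ is treated the same way.

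Once monotonicity on the interval is established, the maximum of $g$ is attained at an endpoint, and it remains to evaluate it there. At each endpoint the size $|\mathcal A|$ equals an explicit sum of binomial coefficients, so the corresponding $\binom{x}{\ell}$ can be bounded below (using Theorem \ref{kkl} for the exact shadow of the initial segment of that size, or by elementary binomial estimates), and the resulting bound on $g$ is compared term by term with $P_1$ and $P_2$. Matching each range to its dominant target --- $P_1$ near the upper end $\binom{n-1}{k-1}+\sum_{i=3}^{\ell+1}\binom{n-i}{k-2}$ and $P_2$ near the lower end --- yields $g(x)\le\max\{P_1,P_2\}$ throughout. The only genuinely delicate computations are the single–crossing analysis of $g'/g$ and these endpoint comparisons, both of which rely on the standing assumptions $n\ge 2\ell>2k$ (and $n\ge\ell^2$ in the first range) to keep the relevant ratios below $1$.
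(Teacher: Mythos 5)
There is a genuine gap, and it is at the heart of your argument: the single Lov\'asz relaxation applied to the \emph{whole} family $\mathcal A$ is far too lossy, and the analytic claim you reduce to, namely $g(x)=\binom{x}{n-k}\bigl(\binom{n}{\ell}-\binom{x}{\ell}\bigr)\le\max\{P_1,P_2\}$ on the prescribed range, is simply false. Your identity $|\mathcal B|=\binom{n}{\ell}-|\Delta_\ell(\mathcal A^c)|$ for maximal $\mathcal B$ is correct, but in the regime of this proposition $\mathcal A$ contains the full star of the element $1$ (after lex normalization), so $\mathcal A^c$ is nothing like the clique-type families $\binom{X}{n-k}$ on which Lov\'asz's shadow bound is essentially attained: the true shadow $\Delta_\ell(\mathcal A^c)$ is nearly all of $\binom{[n]}{\ell}$, while $\binom{x}{\ell}$ is far smaller. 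Concretely, take $n=16$, $k=3$, $\ell=4$ (so $n\ge 2\ell>2k$ and $n\ge\ell^2$) and $|\mathcal A|=141=\binom{15}{2}+\binom{13}{1}+\binom{12}{1}+\binom{11}{1}$, the common endpoint of the two size ranges. Here $P_1=196\cdot 91=17836$ and $P_2=106\cdot 235=24910$. Solving $\binom{x}{13}=141$ gives $x\approx 15.16$, whence $\binom{x}{4}\approx 1433$ and your bound reads $|\mathcal B|\le 1820-1433\approx 387$, so $g(x)\approx 141\cdot 387\approx 5.5\times 10^{4}$, more than twice $\max\{P_1,P_2\}$; the lower endpoint $|\mathcal A|=130$ gives $g\approx 5.3\times 10^{4}$, so $g$ exceeds the target throughout the interval and no unimodality or endpoint analysis of $g$ can rescue the plan. (The true maximal $\mathcal B$ here is $\{B:\{1,2\}\subseteq B\}\cup\{\{1,3,4,5\}\}$ of size $92$, with product $12972$: the slack you give away in one global Lov\'asz application is the entire difference.)

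The missing idea, and what the paper actually does, is to peel off the star structure \emph{before} invoking Lov\'asz. After normalizing to lexicographic families, every $k$-set containing $1$ lies in $\mathcal A$, every $B\in\mathcal B$ contains $1$, and all $\ell$-sets containing $\{1,2\}$ lie in $\mathcal B$; one then passes to the residual cross-intersecting pair $\mathcal A'=\{A\in\binom{[3,n]}{k-1}:\{2\}\cup A\in\mathcal A\}$ and $\mathcal B'=\{B\in\binom{[3,n]}{\ell-1}:\{1\}\cup B\in\mathcal B\}$ on the ground set $[3,n]$, and applies Theorem \ref{Lov1979} in the \emph{opposite} direction to yours: parametrize $|\overline{\mathcal B'}|=\binom{x}{n-\ell-1}$ and bound $|\mathcal A'|\le\binom{n-2}{k-1}-\binom{x}{k-1}$ via the $(k-1)$-shadow of $\overline{\mathcal B'}$. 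This yields the paper's ``subtle polynomial'' $\varphi(x)=\bigl(\binom{n-1}{k-1}+\binom{n-2}{k-1}\bigr)\bigl(\binom{n-2}{\ell-2}+\binom{x}{n-\ell-1}\bigr)-\binom{n-2}{\ell-2}\binom{x}{k-1}$, which—unlike $g$—does stay below $\Gamma$ on $n-\ell-1\le x\le n-5$ (and up to $n-4$ in the first size range). Moreover the interior is handled not by locating a peak of a unimodal function but by a sign trick on $\varphi''$: at an interior maximum with $\varphi''(x_0)=0$ one deduces $\bigl(\binom{n-1}{k-1}+\binom{n-2}{k-1}\bigr)\binom{x_0}{n-\ell-1}<\binom{n-2}{\ell-2}\binom{x_0}{k-1}$ and hence $\varphi(x_0)<\Gamma$ outright, the endpoints being covered by Claims \ref{lemma3.2.5}--\ref{lemma3.2.6} and Lemmas \ref{lemmafn-4b}--\ref{lemmafn-l-1}. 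Your intuition about where $n\ge\ell^2$ enters is close in spirit—it is used exactly to control the extra endpoint $x=n-4$ (Lemma \ref{lemmafn-4b})—but the correct polynomial only emerges after the star is split off; without that step the inequality you reduce to is untrue.
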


 \begin{proposition} \label{propextremal}
 The extremal families in Theorem \ref{maintheo} are unique up to isomorphism.
 %Let $n, k, \ell$  be positive integers with $n\ge  2\ell >2k$ or $n>  2\ell=2k$.
%Let$\mathcal{A}\subseteq {\binom{[n]}{k}}$ and $\mathcal{B}\subseteq {\binom{[n]}{\ell}}$ be two maximal cross-intersecting families with $\cap_{F\in \mathcal{A} \cup \mathcal{B}} F=\emptyset$.
%Then
%\begin{itemize}
%\item[\rm {1)}]if $|\cal{A}||\cal{B}|= \left({\binom{n-1}{k-1}}+{\binom{n-2 }{k-1}}\right){\binom{n-2}{\ell-2}}$, then,  up to isomorphism, we have
%    \[\mbox{$\mathcal{A}=\{A\in {\binom{[n]}{k}}: 1\in A \: \rm{ or} \: 2\in A\}$ \quad and \quad  $\mathcal{B}=\{B\in {\binom{[n]}{\ell}}: [2] \subseteq B\}$};\]
%\item[\rm {2)}] if $|\cal{A}| |\cal{B}|\le \left({\binom{n-1}{k-1}}+1\right)\left({\binom{n-1}{\ell-1}}-{\binom{n-k-1}{\ell-1}}\right)$, then,  up to isomorphism, we have
%    \[\mbox{$\mathcal{A}=\{A\in {\binom{[n]}{k}}: 1\in A\}\cup \{[2,k+1] \}$\quad and \quad $\mathcal{B}=\{B\in {\binom{[n]}{\ell}}: 1\in B, B\cap [2,k+1]\neq \emptyset \}$.}\]
%\end{itemize}
  \end{proposition}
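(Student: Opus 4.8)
The plan is to establish uniqueness by carefully tracking the equality conditions through the chain of inequalities that produced the upper bounds in Theorem~\ref{maintheo}. Recall that the two candidate extremal configurations arise from the families $\mathcal F^{n,u,r,s}$ and $\mathcal F_{n,u,r,s}$: configuration~1) corresponds to $\mathcal A=\mathcal F^{n,k,2,\,\ell+1}$-type families with $\mathcal B=\mathcal F_{n,\ell,2,\cdot}$, and configuration~2) corresponds to the $r=1$, $s=k$ boundary case. Since every bound in Propositions~\ref{prop1}--\ref{prop5}, Proposition~\ref{propfunction} and Theorem~\ref{theo3} was obtained by reducing to the maximal, lex-shifted families of conditions (C1)--(C2), I would first argue that it suffices to analyze equality among lex-initial maximal families: by Lemma~\ref{lem1} (Hilton), any cross-intersecting pair attaining the maximum can be replaced by the corresponding lex-families without decreasing $|\mathcal A||\mathcal B|$, and any deviation from lex-order strictly loses somewhere unless the original families were already isomorphic to lex-families.

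Next I would pin down, for each Proposition, exactly where the strict inequality ``$<$'' in (\ref{baseine}) becomes equality. The key observation is that in each proof the product strictly decreases ($|\mathcal A||\mathcal B|<|\mathcal F_{n,u,r,s}||\mathcal F^{n,u,r,s}|$) whenever $\mathcal A\setminus\mathcal F_{n,\ell,1,t}\neq\emptyset$ (or the analogous nonemptiness condition). Hence equality in the Proposition forces $\mathcal A$ and $\mathcal B$ to coincide exactly with the boundary families $\mathcal F^{n,k,1,t+1}$ and $\mathcal F_{n,\ell,1,t+1}$ at the appropriate parameter. Tracing the endpoints of the size intervals in Corollary~\ref{coroA} and comparing the two maxima, equality in (\ref{equamain}) can only occur at the two extreme parameter values, which correspond precisely to the two displayed families. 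I would then invoke M\"ors' theorem (Theorem~\ref{mors1}) to guarantee that the shadow-type equalities used implicitly (via the Kruskal--Katona bound) admit only the lex-solution $\mathcal C^{(k)}(m)$ up to isomorphism, checking that the relevant cascade representations satisfy one of the three conditions $\ell\ge t$, $\binom{a_1+1}{k}-1=m$, or $m\le k+1$.

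The main obstacle will be the boundary bookkeeping: the two candidate extremal values coincide at the common endpoint $\binom{n-1}{k-1}+\sum_{i=3}^{\ell+1}\binom{n-i}{k-2}$ of the size intervals in Proposition~\ref{prop3}/\ref{prop4}, and I must verify that at this crossover the two families are genuinely distinct (non-isomorphic) rather than being the same family counted twice, and that no third family attains the maximum. This requires an explicit comparison
\[
\left(\tbinom{n-1}{k-1}+\tbinom{n-2}{k-1}\right)\tbinom{n-2}{\ell-2}
\quad\text{versus}\quad
\left(\tbinom{n-1}{k-1}+1\right)\left(\tbinom{n-1}{\ell-1}-\tbinom{n-k-1}{\ell-1}\right),
\]
determining for which ranges of $(n,k,\ell)$ each dominates, and confirming the uniqueness within each range. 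Finally, for the case $\ell=k$ I would reduce to Theorem~\ref{theoFKWX} (Wu--Xiong), whose uniqueness statement already identifies the extremal family, and check that its form matches configuration~2) under the substitution $\ell=k$. The delicate point throughout is ensuring that the isomorphisms (relabelings of $[n]$) are the only freedom, so that ``unique up to isomorphism'' is justified and no spurious non-lex extremal family is overlooked.
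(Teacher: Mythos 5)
Your overall instinct---that uniqueness should come from an equality analysis plus M\"ors' theorem---points at the right tool, but the load-bearing step of your plan does not hold up. You reduce to lex-initial families via Lemma \ref{lem1} and assert that ``any deviation from lex-order strictly loses somewhere unless the original families were already isomorphic to lex-families.'' Hilton's lemma gives no such strictness: it says only that the replacements $\mathcal L(n,k,|\mathcal A|)$ and $\mathcal L(n,\ell,|\mathcal B|)$ are again cross-intersecting with the \emph{same} sizes, hence the same product. So tracing equality through (\ref{baseine}) under (C1)--(C2), as you propose, identifies the extremal pair only \emph{among lex families}; it says nothing about an arbitrary extremal pair, which is exactly what uniqueness requires. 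This transfer-back step is the whole content of Proposition \ref{propextremal}, and your sketch leaves it unproved: your invocation of M\"ors' theorem is left floating, since you never specify to which family, at which shadow level, and with which cascade representation the equality in Kruskal--Katona is supposed to apply.

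The paper closes precisely this gap with a direct argument that never re-lexifies the extremal pair. Suppose $|\mathcal A'||\mathcal B'|=\left(\binom{n-1}{k-1}+1\right)\left(\binom{n-1}{\ell-1}-\binom{n-k-1}{\ell-1}\right)$; the earlier proofs already force the individual sizes. One then passes to the complement family $\overline{\mathcal B'}\subseteq\binom{[n]}{n-\ell}$, whose size has the $(n-\ell)$-cascade $\binom{n-2}{n-\ell}+\binom{n-3}{n-\ell-1}+\cdots+\binom{n-k-1}{n-k-\ell+1}$ with $t=k$ terms, and computes $|\Delta_{k}(\mathcal C^{(n-\ell)}(|\overline{\mathcal B'}|))|=\binom{n-1}{k}-1$. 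Cross-intersection gives $\mathcal A'\cap\Delta_{k}(\overline{\mathcal B'})=\emptyset$, so $|\mathcal A'|\le\binom{n}{k}-|\Delta_{k}(\overline{\mathcal B'})|\le\binom{n-1}{k-1}+1=|\mathcal A'|$, forcing equality in Kruskal--Katona (Theorem \ref{kkl}). Now M\"ors' uniqueness criterion (Theorem \ref{mors1}) applies because the shadow level $k$ satisfies $k\ge t=k$, so $\overline{\mathcal B'}\cong\mathcal C^{(n-\ell)}(|\overline{\mathcal B'}|)$ up to isomorphism, and the stated structure of $\mathcal A'$ and $\mathcal B'$ follows; the other extremal value is handled the same way, more simply. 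Note also that your worry about the two bounds ``crossing over'' is moot---Theorem \ref{maintheo} asserts uniqueness separately within each of the two equality cases---and no separate reduction to Theorem \ref{theoFKWX} for $\ell=k$ is needed, since the complementation argument is uniform in $k\le\ell$.
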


We need the following notations and lemmas to prove Proposition \ref{propfunction}.
For fixed $n,k,\ell$, define a polynomial function of $x$ as
 \begin{small}
\begin{eqnarray}\label{varphix}
\varphi (x):= \left({\binom{n-1}{k-1}}+{\binom{n-2 }{k-1}}\right)\left({\binom{n-2}{\ell-2}}+{\binom{x}{n-\ell-1}}\right)-{\binom{n-2}{\ell-2}}{\binom{x}{k-1}}
\end{eqnarray}
 \end{small}
and denote
 \begin{small}
 \begin{eqnarray}\label{Gamma}
 \Gamma =\Gamma(n,k,\ell)=\max \left\{ \left({\binom{n-1}{k-1}}+{\binom{n-2 }{k-1}}\right){\binom{n-2}{\ell-2}}, \left({\binom{n-1}{k-1}}+1\right)\left({\binom{n-1}{\ell-1}}-{\binom{n-k-1}{\ell-1}}\right) \right\}.
 \end{eqnarray}
 \end{small}
%We will show that $\varphi (n-5)<\Gamma$, $\varphi (n-\ell-1)<\Gamma$ for all $n\ge 2\ell \ge 2k>0$; and $\varphi (n-4)<\Gamma$ for all $n\ge \ell^2\ge k^2>0$.

Notice that $\varphi (n-5)=\left(\binom{n-1 }{ k-1}+\binom{n-2 }{ k-1}\right)\left(\binom{n-2}{ \ell-2}+\binom{n-5}{ \ell-4}\right)-\binom{n-5}{ k-1}\binom{n-2}{ \ell-2}$.
We first show that $\varphi (n-5)<\Gamma$,
which follows from the following two lemmas.

 \begin{lemma} \label{lemma(n-5)}
Let $n,k,\ell$ be positive integers with $n\ge 2\ell \ge 2k> 0$. Then $\varphi (n-5)<\Gamma$.
\end{lemma}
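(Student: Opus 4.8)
The plan is to prove the strict inequality $\varphi(n-5)<\Gamma$ by comparing $\varphi(n-5)$ against the specific candidate $\left(\binom{n-1}{k-1}+\binom{n-2}{k-1}\right)\binom{n-2}{\ell-2}$ inside the maximum defining $\Gamma$. Writing out $\varphi(n-5)$ explicitly, it suffices to show
\begin{small}
\[
\left(\binom{n-1}{k-1}+\binom{n-2}{k-1}\right)\binom{n-5}{\ell-4}<\binom{n-5}{k-1}\binom{n-2}{\ell-2},
\]
\end{small}
since subtracting the common term $\left(\binom{n-1}{k-1}+\binom{n-2}{k-1}\right)\binom{n-2}{\ell-2}$ from both sides of $\varphi(n-5)<\left(\binom{n-1}{k-1}+\binom{n-2}{k-1}\right)\binom{n-2}{\ell-2}$ reduces it exactly to this. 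Thus the whole lemma collapses to a single ratio inequality between products of binomial coefficients, and the factor $\binom{n-1}{k-1}+\binom{n-2}{k-1}$ can be crudely bounded above by $2\binom{n-1}{k-1}$ to simplify the algebra.

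First I would rewrite the target inequality in the equivalent ratio form
\begin{small}
\[
\frac{\binom{n-1}{k-1}+\binom{n-2}{k-1}}{\binom{n-5}{k-1}}<\frac{\binom{n-2}{\ell-2}}{\binom{n-5}{\ell-4}},
\]
\end{small}
and then expand each side as an explicit product of linear factors in $n,k,\ell$ using the falling-factorial expression for binomial coefficients. The left-hand ratio $\binom{n-1}{k-1}/\binom{n-5}{k-1}$ is a product of four factors of the form $(n-j)/(n-k-j)$ for small $j$, each close to $1$ when $n$ is large relative to $k$; the right-hand ratio $\binom{n-2}{\ell-2}/\binom{n-5}{\ell-4}$ contains a factor $(\ell-2)(\ell-3)$ in the numerator coming from lowering the bottom index by two, which grows like $\ell^2$. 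The governing constraint $n\ge 2\ell>2k$ is what forces the right-hand side to dominate: it guarantees $n-k-j$ stays bounded below by roughly $\ell$, keeping the left ratio $O(1)$, while the right ratio carries the quadratic gain in $\ell$.

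The key steps in order are: (i) reduce to the displayed ratio inequality as above; (ii) replace $\binom{n-1}{k-1}+\binom{n-2}{k-1}$ by $2\binom{n-1}{k-1}$ and cancel common factors, turning the claim into an inequality among four or five linear factors; (iii) use $n\ge 2\ell$ to bound each factor $(n-j)/(n-k-j)$ on the left by a constant and extract the $(\ell-2)(\ell-3)/(n-\ell-?)$-type factor on the right; (iv) verify the resulting elementary inequality, handling the small cases $\ell\in\{2,3\}$ separately since there the quadratic gain degenerates. I expect step (iv), the small-$\ell$ boundary, to be the main obstacle: for $\ell=2$ or $3$ the coefficient $(\ell-2)(\ell-3)$ vanishes or is tiny, so the crude bound in (ii) is too lossy and one must instead argue directly that $n-5<k-1$ is impossible or that the relevant families are empty, or else check the inequality by hand against the other term $\left(\binom{n-1}{k-1}+1\right)\left(\binom{n-1}{\ell-1}-\binom{n-k-1}{\ell-1}\right)$ of the maximum $\Gamma$. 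For all $\ell\ge 4$ the quadratic factor makes the inequality comfortable, so the bulk of the work is isolating and dispatching these low-dimensional edge cases.
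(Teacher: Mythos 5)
Your opening reduction is exactly where the proof breaks. You claim it suffices to show
\[
\left(\binom{n-1}{k-1}+\binom{n-2}{k-1}\right)\binom{n-5}{\ell-4}<\binom{n-5}{k-1}\binom{n-2}{\ell-2},
\]
i.e.\ you compare $\varphi(n-5)$ only against the first term $\left(\binom{n-1}{k-1}+\binom{n-2}{k-1}\right)\binom{n-2}{\ell-2}$ of the maximum defining $\Gamma$. But this inequality is \emph{false} on a substantial part of the hypothesis range $n\ge 2\ell\ge 2k$, namely when $k$ is close to $\ell$ and $n$ is close to $2\ell$. The exact ratio of your left side to your right side is
\[
\frac{(2n-k-1)(n-\ell)(\ell-2)(\ell-3)}{(n-k)(n-k-1)(n-k-2)(n-k-3)},
\]
which at $n=2\ell$, $k=x\ell$ behaves like $(4-x)/(2-x)^4$ for large $\ell$, and this exceeds $1$ already for $x\ge 0.65$. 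Concretely, for $n=20$, $k=\ell=10$ the ratio is $\frac{29\cdot 10\cdot 56}{10\cdot 9\cdot 8\cdot 7}\approx 3.2$, and even insisting on $k<\ell$, for $n=20$, $k=9$, $\ell=10$ it is $\approx 2.1$. In these cases $\varphi(n-5)$ genuinely exceeds the first term of $\Gamma$ (e.g.\ at $n=20$, $k=\ell=10$ one has $\varphi(15)\approx 6.66\times 10^9$ versus a first term $\approx 6.17\times 10^9$), so no sharpening of your estimates in steps (ii)--(iii) can rescue the reduction: the lemma survives there only because the \emph{second} term $\left(\binom{n-1}{k-1}+1\right)\left(\binom{n-1}{\ell-1}-\binom{n-k-1}{\ell-1}\right)$ dominates. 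Your heuristic that the right ratio carries a quadratic gain in $\ell$ while the left stays $O(1)$ is valid for $n\gg \ell^2$ but fails at the binding case $n=2\ell$, where the gain $(\ell-2)(\ell-3)$ is cancelled by $(n-2)(n-3)\approx 4\ell^2$ and both ratios are $O(1)$.

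Your diagnosis of the hard cases is also inverted. The cases $\ell\in\{2,3\}$ you single out are in fact immediate, since there $\binom{n-5}{\ell-4}=0$ and $\varphi(n-5)$ does not exceed the first term of $\Gamma$; the genuinely hard regime is $k>0.6\ell$ with arbitrary $\ell\ge 5$, directly contradicting your assertion that ``for all $\ell\ge 4$ the quadratic factor makes the inequality comfortable.'' The paper handles this by splitting into two claims: Claim~\ref{lemma3.2.6} proves precisely your displayed inequality, but only for $\ell=4$ or $k\le 0.6\ell$ (by showing the ratio above is decreasing in $n$ and checking $n=2\ell$), while Claim~\ref{lemma3.2.5} covers $2k>1.2\ell\ge 6$ by the different comparison
\[
\left(\binom{n-1}{k-1}+\binom{n-2}{k-1}\right)\left(\binom{n-2}{\ell-2}+\binom{n-5}{\ell-4}\right)<\binom{n-1}{k-1}\left(\binom{n-1}{\ell-1}-\binom{n-k-1}{\ell-1}\right),
\]
i.e.\ against the second term of $\Gamma$, which requires a separate and more delicate analysis (monotonicity in $n$, reduction to $n=2\ell$, and the sign analysis of a degree-six polynomial). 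Your proposal contains no counterpart of this second regime, so it has a genuine gap that cannot be closed along the route you describe.
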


Since $\varphi (n-5)=\left(\binom{n-1 }{ k-1}+\binom{n-2 }{ k-1}\right)\left(\binom{n-2}{ \ell-2}+\binom{n-5}{ \ell-4}\right)-\binom{n-5}{ k-1}\binom{n-2}{ \ell-2}$, Lemma \ref{lemma(n-5)} can be deduced from the following two claims immediately.
 \begin{claim} \label{lemma3.2.5}
Let $n\ge 2\ell \ge 2k> 1.2\ell\ge 6$ be positive integers. Then
  \begin{small}
 \begin{equation} \label{eqklemma3.2.5}
 \left({\binom{n-1}{k-1}}+{\binom{n-2 }{k-1}}\right)\left({\binom{n-2}{\ell-2}}+{\binom{n-5}{\ell-4}}\right)<{\binom{n-1}{k-1}}\left({\binom{n-1}{\ell-1}}-{\binom{n-k-1}{\ell-1}}\right).
  \end{equation}
   \end{small}
\end{claim}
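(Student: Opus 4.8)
The plan is to clear the common factor $\binom{n-1}{k-1}$ from (\ref{eqklemma3.2.5}) and reduce everything to a purely binomial inequality in $\ell$-type coefficients. On the left I would use $\binom{n-2}{k-1}=\frac{n-k}{n-1}\binom{n-1}{k-1}$, so that $\binom{n-1}{k-1}+\binom{n-2}{k-1}=\frac{2n-k-1}{n-1}\binom{n-1}{k-1}$; on the right I would rewrite the difference of binomials by the telescoping identity (\ref{beq2}), which gives $\binom{n-1}{\ell-1}-\binom{n-k-1}{\ell-1}=\sum_{i=2}^{k+1}\binom{n-i}{\ell-2}$. After dividing by $\binom{n-1}{k-1}$, the claim (\ref{eqklemma3.2.5}) becomes equivalent to
\[
\frac{2n-k-1}{n-1}\left(\binom{n-2}{\ell-2}+\binom{n-5}{\ell-4}\right)<\sum_{i=2}^{k+1}\binom{n-i}{\ell-2}.
\]
This is the inequality I would actually prove: its right-hand side is a sum of $k$ positive binomial coefficients whose largest term is $\binom{n-2}{\ell-2}$, while the left factor is $\frac{2n-k-1}{n-1}=2-\frac{k-1}{n-1}\in(1,2)$.

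The key structural observation is that this reduced inequality is \emph{tightest at $n=2\ell$}. Dividing through by $\binom{n-2}{\ell-2}$, the normalized right-hand side is increasing in $n$, because each consecutive ratio $\binom{n-i-1}{\ell-2}/\binom{n-i}{\ell-2}=1-\frac{\ell-2}{n-i}$ increases with $n$ (slower decay yields a larger sum); meanwhile $\frac{2n-k-1}{n-1}<2$, and $\binom{n-5}{\ell-4}=\frac{(\ell-2)(\ell-3)(n-\ell)}{(n-2)(n-3)(n-4)}\binom{n-2}{\ell-2}$ shows that the correction term, relative to $\binom{n-2}{\ell-2}$, is of order $1/n^2$ and hence decreases in $n$. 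For $n$ far from $2\ell$ the sum has roughly $k\ (\ge 4)$ comparable terms, so the crude bound $\sum_{i=2}^{k+1}\binom{n-i}{\ell-2}\ge 2\left(\binom{n-2}{\ell-2}+\binom{n-5}{\ell-4}\right)$ settles it together with $\frac{2n-k-1}{n-1}<2$. The delicate region is a band of $n$ just above $2\ell$, which I would collapse to the single instance $n=2\ell$ by the monotonicity-of-the-quotient device already used in Propositions \ref{prop1} and \ref{prop3} (showing that the ratio of the two sides does not increase when $n$ is replaced by $n+1$).

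At $n=2\ell$ the inequality becomes a relation between $k$ and $\ell$ alone: the left factor equals $\frac{4\ell-k-1}{2\ell-1}$, and there $\binom{n-5}{\ell-4}=\frac{\ell(\ell-2)(\ell-3)}{(2\ell-2)(2\ell-3)(2\ell-4)}\binom{n-2}{\ell-2}$, a term of size about $\frac{1}{8}\binom{n-2}{\ell-2}$. Here I would lower-bound the right-hand sum by retaining its first few terms (whose consecutive ratios are close to $\frac12$ at $n=2\ell$) and then invoke the hypotheses $k>0.6\ell$ and $\ell\ge 5$ to close the gap. The main obstacle is precisely the razor-thin margin in this extremal case: as $\ell\to\infty$ with $n=2\ell$ and $k$ near $0.6\ell$ the two sides agree to leading order, so the lower-order term $\binom{n-5}{\ell-4}$ cannot be discarded and must be carried with its exact constant. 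This is exactly why the hypothesis reads $2k>1.2\ell$ rather than a weaker bound such as $2k>\ell$; the monotonicity-in-$n$ reduction of the previous paragraph, though conceptually routine, is the other laborious ingredient.
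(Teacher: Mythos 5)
Your reduction is exactly the paper's: divide by $\binom{n-1}{k-1}$, write $\binom{n-1}{k-1}+\binom{n-2}{k-1}=\bigl(2-\frac{k-1}{n-1}\bigr)\binom{n-1}{k-1}$, and telescope $\binom{n-1}{\ell-1}-\binom{n-k-1}{\ell-1}=\sum_{i=2}^{k+1}\binom{n-i}{\ell-2}$; you also correctly locate the tight corner ($n=2\ell$, $k$ near $0.6\ell$, $\ell\to\infty$, margin about $0.025$ after truncation). But as a proof there are genuine gaps, and the central one is your monotonicity claim. The assertion that the inequality is ``tightest at $n=2\ell$'' does not follow from the observations you give: the factor $\frac{2n-k-1}{n-1}=2-\frac{k-1}{n-1}$ is \emph{increasing} in $n$, so the normalized left-hand side is a product of an increasing factor and a decreasing correction, while the normalized right-hand sum also increases in $n$ --- both sides grow, and nothing you state compares their growth rates. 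Nor can you patch this by the crude bound $\frac{2n-k-1}{n-1}<2$ in the delicate band: at $k\approx 0.6\ell$, $n\to\infty$ the correction contributes up to the factor $\frac98$, and $2\cdot\frac98=2.25$ exceeds the limiting value $2$ of the normalized sum, so the coefficient $-\frac{k-1}{n-1}$ must be carried exactly there. Establishing that the difference (or quotient) moves the right way in $n$ is precisely the paper's computation $g'(n)>\frac{\ell-2}{(n-2)^2}-\frac98\cdot\frac{0.6\ell-1}{(n-1)^2}>0$ after truncating the sum to five terms and substituting $k>0.6\ell$; you defer this as ``conceptually routine,'' but it is the crux, not a routine step.

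Two further gaps: first, your endgame at $n=2\ell$ (``retain the first few terms \dots{} close the gap'') is exactly the razor-thin verification you yourself flag; the paper does it by setting $h(\ell)=g(2\ell)$, proving $h'(\ell)<0$ via the sign of a sextic $m(\ell)=624\ell^6-4880\ell^5+\cdots$, and computing $\lim_{\ell\to\infty}h(\ell)=0.025>0$ --- none of which your sketch supplies, and with a $0.025$ margin no hand-waving substitute will do. Second, you miss the edge case $k=4$ (which $2k>1.2\ell\ge 6$ forces to $\ell\in\{5,6\}$): there only four terms are available in the sum, the five-term truncation is unavailable, and the paper must sharpen the bound $\binom{n-5}{\ell-4}/\binom{n-2}{\ell-2}\le f(2\ell)<\frac18$ to $<\frac1{10}$ by direct calculation for $\ell=5,6$ to make the scheme close; your plan, which keeps ratios ``close to $\frac12$'' for the first few terms, never checks this case. (A small inaccuracy besides: with the full sum the two sides do not ``agree to leading order'' in the limit --- the normalized ratio tends to $1.9125/2\approx 0.956$; the near-equality with margin $0.025$ arises only after the five-term truncation.) In short: right skeleton, same route as the paper, but the two load-bearing estimates are asserted rather than proved, and one stated justification (monotonicity from the factor being $<2$) is insufficient where it matters.
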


\begin{claim} \label{lemma3.2.6}
Let $n,k,\ell$ be positive integers satisfying $n\ge 2\ell \ge 2k$. If $\ell=4$ or $k\le 0.6\ell$, then
  \begin{equation} \label{eqklemma3.2.6}
  \left({\binom{n-1}{k-1}}+{\binom{n-2 }{k-1}}\right){\binom{n-5}{\ell-4}} <{\binom{n-5}{k-1}}{\binom{n-2}{\ell-2}}.
  \end{equation}
\end{claim}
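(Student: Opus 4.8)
The plan is to turn the binomial inequality (\ref{eqklemma3.2.6}) into a purely polynomial one by forming the quotient of its two sides. If $\ell\le 3$ then $\binom{n-5}{\ell-4}=0$, so the left-hand side vanishes and there is nothing to prove; hence assume $\ell\ge 4$, so that $\binom{n-5}{\ell-4}\ge 1$. Dividing (\ref{eqklemma3.2.6}) by $\binom{n-5}{k-1}\binom{n-5}{\ell-4}>0$ and using the identities
\[
\frac{\binom{n-1}{k-1}+\binom{n-2}{k-1}}{\binom{n-5}{k-1}}=\frac{(n-2)(n-3)(n-4)(2n-k-1)}{(n-k)(n-k-1)(n-k-2)(n-k-3)},\qquad
\frac{\binom{n-2}{\ell-2}}{\binom{n-5}{\ell-4}}=\frac{(n-2)(n-3)(n-4)}{(\ell-2)(\ell-3)(n-\ell)},
\]
and then cancelling the positive factor $(n-2)(n-3)(n-4)$, the claim becomes equivalent to the polynomial inequality
\[
(2n-k-1)(\ell-2)(\ell-3)(n-\ell)<(n-k)(n-k-1)(n-k-2)(n-k-3),\qquad(\star)
\]
which I would prove under the stated hypotheses.

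Next I would reduce to the extreme value $n=2\ell$. Writing $\rho(n)$ for the ratio of the left- to the right-hand side of $(\star)$, a direct computation gives $\rho(n+1)/\rho(n)=\frac{(2n-k+1)(n+1-\ell)}{(2n-k-1)(n-\ell)}\cdot\frac{n-k-3}{n+1-k}$, and one checks this is $<1$ for all $n\ge 2\ell$, so $\rho$ is decreasing and it suffices to establish $(\star)$ at $n=2\ell$. In particular, for $\ell=4$ the inequality $(\star)$ reads $2(2n-k-1)(n-4)<(n-k)(n-k-1)(n-k-2)(n-k-3)$; since the right side has higher degree in $n$, it is enough to verify the finitely many relevant cases $k\in\{1,2,3\}$ at $n=8$, which is immediate.

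For the principal case $k\le 0.6\ell$ I would substitute $n=2\ell$ and set $j=\ell-k$, so that the hypothesis becomes $j\ge 0.4\ell$ and $(\star)$ turns into
\[
(3\ell+j-1)(\ell-2)(\ell-3)\ell<(\ell+j)(\ell+j-1)(\ell+j-2)(\ell+j-3).
\]
Here the ratio of the two sides is again decreasing in $j$ — in the logarithmic derivative the four terms $\tfrac{1}{\ell+j-i}$ dominate the single term $\tfrac{1}{3\ell+j-1}$ — so the hardest case is the smallest admissible value $j=0.4\ell$. Evaluating there reduces everything to the single-variable inequality $(3.4\ell-1)(\ell-2)(\ell-3)\ell<(1.4\ell)(1.4\ell-1)(1.4\ell-2)(1.4\ell-3)$ for $\ell\ge 5$; its leading coefficient is $(1.4)^4-3.4\approx 0.44>0$, and the remaining small values of $\ell$ are checked directly, completing the argument.

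The delicate point — and the main obstacle — is that the inequality is nearly tight precisely at the corner $n=2\ell$, $k=0.6\ell$, where the ratio in $(\star)$ tends to $3.4/(1.4)^4\approx 0.885$ as $\ell\to\infty$. Consequently the two monotonicity reductions (in $n$ and in $j$) must be carried out carefully rather than with crude factor-by-factor bounds, and the final one-variable polynomial slack, though genuine, is thin. This corner is also exactly why the hypothesis is $k\le 0.6\ell$: writing $x=k/\ell$, the leading behaviour of $(\star)$ at $n=2\ell$ is the continuous inequality $(2-x)^4>4-x$, which holds with a margin at $x=0.6$ but reverses once $x$ grows past roughly $0.63$.
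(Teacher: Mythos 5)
Your proposal is, on the cases it treats, correct and follows essentially the same route as the paper. The paper forms exactly your ratio: it sets $f(n)=\frac{(2n-k-1)(n-\ell)(\ell-2)(\ell-3)}{(n-k)(n-k-1)(n-k-2)(n-k-3)}$ (your $(\star)$), proves $f$ decreasing for $n\ge 2\ell$ via the logarithmic derivative (your discrete quotient $\rho(n+1)/\rho(n)<1$ is the same step; it does hold under $n\ge 2\ell\ge 2k$, though you assert rather than verify it), and then works at $n=2\ell$ with $x=k/\ell$. The one genuine divergence is the endgame: the paper disposes of $\ell\le 6$ by direct computation and, for $\ell\ge 7$, replaces $f(2\ell)$ by the cruder bound $\frac{1.12(4-x)}{(2-x)^4}$, which is increasing on $(0,0.6]$ and $<1$ at $x=0.6$ since $1.12\times 3.4<1.4^4$; your exact monotonicity in $j=\ell-k$ (the four terms $\frac{1}{\ell+j-i}$ dominating $\frac{1}{3\ell+j-1}$) avoids both the $1.12$ fudge factor and the $\ell\le 6$ split, at the cost of a one-variable cubic verification (whose slack, $0.4416\ell^3+1.536\ell^2-3.84\ell-2.4>0$, is comfortable for $\ell\ge 4$). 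This is a marginally cleaner execution of the same idea, not a different method.

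One concrete gap, which you share with the paper: in the $\ell=4$ branch you check only the "relevant cases $k\in\{1,2,3\}$", but the hypothesis $n\ge 2\ell\ge 2k$ with $\ell=4$ also admits $k=4$, and there the claimed inequality is in fact \emph{false} for small $n$: at $(n,k,\ell)=(8,4,4)$ the left side is $\left(\binom{7}{3}+\binom{6}{3}\right)\binom{3}{0}=55$ while the right side is $\binom{3}{3}\binom{6}{2}=15$, and it fails again at $n=9$ ($91$ versus $84$). So no completion of your argument (nor of the paper's, which likewise checks only $k=2,3$ for $\ell=4$) can cover $k=\ell=4$; the claim as stated is simply wrong at that corner. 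This is harmless downstream, since the claim is only invoked with $k<\ell$ (Proposition 3.4 assumes $2\ell>2k$), but your proof should state the restriction $k\le 3$ explicitly rather than leave "relevant" implicit — your $n$-monotonicity then legitimately reduces those cases to the single check at $n=8$, where all three are immediate.
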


The next lemma states that $ \varphi (n-4)<\Gamma $ if $n$ is large enough  with respect to $\ell$.
\begin{lemma} \label{lemmafn-4b}
Let $n, k, \ell$  be positive integers with $0<k < \ell <\ell^2 \le n$.
Then
$ \varphi (n-4)<\Gamma. $
\end{lemma}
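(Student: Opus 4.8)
The plan is to throw away the more complicated of the two terms defining $\Gamma$ and compare $\varphi(n-4)$ against the first one only. Since $\Gamma\ge\left(\binom{n-1}{k-1}+\binom{n-2}{k-1}\right)\binom{n-2}{\ell-2}$ by definition of the maximum, it suffices to prove the sharper statement $\varphi(n-4)<\left(\binom{n-1}{k-1}+\binom{n-2}{k-1}\right)\binom{n-2}{\ell-2}$. Substituting $x=n-4$ into \eqref{varphix} and using the symmetry $\binom{n-4}{n-\ell-1}=\binom{n-4}{\ell-3}$, the two copies of $\left(\binom{n-1}{k-1}+\binom{n-2}{k-1}\right)\binom{n-2}{\ell-2}$ cancel, so the target collapses to
\[
\left(\binom{n-1}{k-1}+\binom{n-2}{k-1}\right)\binom{n-4}{\ell-3}<\binom{n-2}{\ell-2}\binom{n-4}{k-1}.
\]

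Next I would turn this into a single rational inequality. Dividing both sides by $\binom{n-4}{k-1}\binom{n-2}{\ell-2}$ and expanding each quotient of binomials as a product of linear factors, the left side (after combining $\binom{n-1}{k-1}/\binom{n-4}{k-1}$ and $\binom{n-2}{k-1}/\binom{n-4}{k-1}$ over the common denominator $(n-k)(n-k-1)(n-k-2)$ and multiplying by $\binom{n-4}{\ell-3}/\binom{n-2}{\ell-2}=\frac{(\ell-2)(n-\ell)}{(n-2)(n-3)}$) simplifies to
\[
\frac{(2n-k-1)(\ell-2)(n-\ell)}{(n-k)(n-k-1)(n-k-2)},
\]
so the whole lemma reduces to the polynomial inequality $(2n-k-1)(\ell-2)(n-\ell)<(n-k)(n-k-1)(n-k-2)$.

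To handle all admissible $k$ uniformly I would set $u=n-k$ and write the quotient as $g(u)\,(\ell-2)(n-\ell)$ with $g(u)=\frac{n+u-1}{u(u-1)(u-2)}$; a one-line cross-multiplication shows $g$ is strictly decreasing in $u$ (the difference reduces to $3n+2u-1>0$), so over $1\le k\le\ell-1$ the left side is largest at the smallest $u$, that is, at $k=\ell-1$. Putting $k=\ell-1$ and cancelling the common factor $n-\ell$, the inequality becomes $(2n-\ell)(\ell-2)<(n-\ell)^2-1$, equivalently $f(n):=n^2-4n\ell+2\ell^2+4n-2\ell-1>0$.

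The last step, which is where the hypothesis $n\ge\ell^2$ is genuinely used, is to read $f$ as a quadratic in $n$: its vertex lies at $n=2\ell-2<\ell^2$ for $\ell\ge2$, so $f$ is increasing on $[\ell^2,\infty)$ and it suffices to check $f(\ell^2)=\ell^4-4\ell^3+6\ell^2-2\ell-1>0$. This polynomial in $\ell$ is positive for $\ell\ge4$ because $\ell^3(\ell-4)\ge0$ and $6\ell^2-2\ell-1>0$, and the cases $\ell=2,3$ are verified by direct substitution (with $\ell=2$ in fact immediate from $\binom{n-4}{\ell-3}=0$). The main obstacle is exactly this boundary analysis: crude estimates such as $\ell-2<\sqrt n$ already fail near $n=\ell^2$, so the argument must first collapse to the extremal case $k=\ell-1$ and only then extract the needed slack from $n\ge\ell^2$ via the monotonicity of $f$.
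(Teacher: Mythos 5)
Your proof is correct, and its opening move is exactly the paper's: the paper also discards the second branch of $\Gamma$, writes $\varphi(n-4)$ as the first branch plus the correction term $\left(\binom{n-1}{k-1}+\binom{n-2}{k-1}\right)\binom{n-4}{\ell-3}-\binom{n-2}{\ell-2}\binom{n-4}{k-1}$, and reduces the lemma to a three-factor polynomial inequality. Where you genuinely diverge is the endgame. The paper first coarsens $\binom{n-1}{k-1}+\binom{n-2}{k-1}$ to $2\binom{n-1}{k-1}$, so its target becomes $(2\ell-4)(n-1)(n-\ell)\le(n-k)(n-k-1)(n-k-2)$, which it settles in one line by comparing individual factors and the sums of factors, namely $2\ell-4<n-k-2$ and $3n-3k-3\ge(2\ell-4)+(n-1)+(n-2)$ for $n\ge\ell^2$; this balancing step is stated very tersely (the auxiliary claim ``$n-2>n-k$'' fails as written for $k\le 2$, and a sum comparison alone does not bound a product, so the paper is implicitly invoking a rearrangement/majorization argument it does not spell out). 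You instead keep the exact factor $2n-k-1$, monotonize in $k$ via the decreasing function $g(u)=\frac{n+u-1}{u(u-1)(u-2)}$ (your cross-multiplication to $3n+2u-1>0$ checks out), collapse to the extremal case $k=\ell-1$, and finish with the quadratic $f(n)=n^2-4n\ell+2\ell^2+4n-2\ell-1$, whose vertex $2\ell-2<\ell^2$ and value $f(\ell^2)=\ell^4-4\ell^3+6\ell^2-2\ell-1>0$ deliver the strict inequality; I verified the identity $\frac{(2n-k-1)(\ell-2)(n-\ell)}{(n-k)(n-k-1)(n-k-2)}$ and the degenerate cases $\ell\in\{2,3\}$. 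What each buys: the paper's finish is shorter but leans on an unjustified factor-balancing step, while yours is slightly sharper (no factor-$2$ loss), fully rigorous, and makes explicit that $n\ge\ell^2$ enters only through $f$ being increasing on $[\ell^2,\infty)$ with $f(\ell^2)>0$ — exactly the boundary sensitivity you flag at the end.
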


Lemmas \ref{lemma(n-5)} and \ref{lemmafn-4b} imply that $\varphi (x)<\Gamma$ when $x$ is ``large'' with respect to $n$.
The next lemma shows that $\varphi (x)<\Gamma$ when $x$ is ``small'' with respect to $n$.
\begin{lemma} \label{lemmafn-l-1}
Let $n,k,\ell$ be positive integers with $n\ge \ell + 1$  and $\ell>k\ge 2$, then $ \varphi (n-\ell-1)<\Gamma$.
\end{lemma}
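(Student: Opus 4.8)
The statement to prove is Lemma \ref{lemmafn-l-1}: $\varphi(n-\ell-1)<\Gamma$ under $n\ge \ell+1$ and $\ell>k\ge 2$.

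\medskip

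The plan is to evaluate $\varphi(n-\ell-1)$ explicitly and then compare it against the first term in the maximum defining $\Gamma$, namely $\left(\binom{n-1}{k-1}+\binom{n-2}{k-1}\right)\binom{n-2}{\ell-2}$. Substituting $x=n-\ell-1$ into the definition (\ref{varphix}), the two binomials $\binom{n-\ell-1}{n-\ell-1}$ and $\binom{n-\ell-1}{k-1}$ become tractable: the first equals $1$, while the second is a genuine binomial coefficient that is comparatively small because $k-1<n-\ell-1$ when $x$ is ``small'' relative to $n$. Concretely, I expect
\[
\varphi(n-\ell-1)=\left(\binom{n-1}{k-1}+\binom{n-2}{k-1}\right)\left(\binom{n-2}{\ell-2}+1\right)-\binom{n-\ell-1}{k-1}\binom{n-2}{\ell-2},
\]
so that proving $\varphi(n-\ell-1)<\Gamma$ reduces, after cancelling the common term $\left(\binom{n-1}{k-1}+\binom{n-2}{k-1}\right)\binom{n-2}{\ell-2}$, to establishing
\[
\binom{n-1}{k-1}+\binom{n-2}{k-1}<\binom{n-\ell-1}{k-1}\binom{n-2}{\ell-2}.
\]

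\medskip

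The core of the argument is therefore this last inequality, which I would prove by bounding the left side crudely and the right side from below. Since $\binom{n-1}{k-1}+\binom{n-2}{k-1}<2\binom{n-1}{k-1}$, it suffices to show $2\binom{n-1}{k-1}\le \binom{n-\ell-1}{k-1}\binom{n-2}{\ell-2}$, or equivalently, after dividing by $\binom{n-1}{k-1}$, that $\binom{n-2}{\ell-2}$ exceeds $2\,\binom{n-1}{k-1}/\binom{n-\ell-1}{k-1}$. The ratio $\binom{n-1}{k-1}/\binom{n-\ell-1}{k-1}$ telescopes into a product of $\ell$ factors each of the form $(n-j)/(n-j-k+1)$ for $j=1,\dots,\ell$, each of which is bounded by a constant close to $1$ when $n$ is not too small; meanwhile $\binom{n-2}{\ell-2}$ grows polynomially of degree $\ell-2$ in $n$. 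The hard part will be making the comparison uniform across the full range $\ell>k\ge 2$ and $n\ge \ell+1$, since near the lower boundary $n=\ell+1$ the binomials $\binom{n-\ell-1}{k-1}=\binom{0}{k-1}$ and $\binom{n-2}{\ell-2}=\binom{\ell-1}{\ell-2}=\ell-1$ behave degenerately; I would handle such small-$n$ boundary cases separately, checking them by a direct (possibly inductive) computation, and reserve the telescoping-ratio estimate for the generic regime.

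\medskip

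If instead the maximum in $\Gamma$ is attained by the second term, the inequality $\varphi(n-\ell-1)<\Gamma$ only becomes easier, so I would note that it suffices to compare against the smaller of the two candidates; in practice comparing with the first term $\left(\binom{n-1}{k-1}+\binom{n-2}{k-1}\right)\binom{n-2}{\ell-2}$ is the natural choice because it shares the leading structure of $\varphi(n-\ell-1)$ and produces the clean cancellation above. Throughout I would rely on the elementary scaling facts (F1) and (F2) and on Result \ref{r1} to manipulate the ratios, and I expect the whole lemma to follow from the single reduced inequality together with a short verification of the degenerate small-$n$ cases.
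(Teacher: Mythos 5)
Your evaluation of $\varphi(n-\ell-1)$ and the cancellation of the common term $\left(\binom{n-1}{k-1}+\binom{n-2}{k-1}\right)\binom{n-2}{\ell-2}$ are exactly the paper's opening moves: its proof also reduces the lemma (for $(k,\ell)\neq(2,3)$) to
\[
\binom{n-1}{k-1}+\binom{n-2}{k-1}<\binom{n-\ell-1}{k-1}\binom{n-2}{\ell-2}.
\]
But your plan for finishing has a genuine gap. This reduced inequality is \emph{false} in admissible cases, so comparing only against the first candidate of $\Gamma$ cannot work in general: for $(k,\ell,n)=(2,3,6)$ the left side is $5+4=9$ while the right side is $\binom{2}{1}\binom{4}{1}=8$, and indeed $\varphi(n-\ell-1)=\varphi(2)=37$ exceeds the first candidate $\left(\binom{5}{1}+\binom{4}{1}\right)\binom{4}{1}=36$; the lemma survives there only because $\Gamma=42$ is attained by the \emph{second} candidate. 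Your remark that ``if the maximum is attained by the second term, the inequality only becomes easier'' has the logic backwards: being below either single candidate does imply being below $\Gamma$, but when the first candidate is the smaller one you may simply fail to be below it, and then you are forced to compare with the second. The paper anticipates exactly this: it splits off $(k,\ell)=(2,3)$ and checks $\varphi(n-\ell-1)<n(2n-5)=\Gamma$ directly, and for all other $(k,\ell)$ it proves the reduced inequality by the entirely elementary chain $\binom{n-1}{k-1}+\binom{n-2}{k-1}=\left(\frac{n-1}{n-k}+1\right)\binom{n-2}{k-1}<3\binom{n-2}{k-1}\le\binom{k+1}{k-1}\binom{n-2}{k-1}\le\binom{n-2}{\ell-2}\binom{n-\ell-1}{k-1}$, with no asymptotics at all.

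Your quantitative scheme for the generic regime also does not go through as described. The ratio $\binom{n-1}{k-1}/\binom{n-\ell-1}{k-1}=\prod_{j=1}^{\ell}\frac{n-j}{n-j-k+1}$ telescopes as you say, but the factors are not ``bounded by a constant close to $1$'' uniformly: for $k$ near $\ell$ and $n$ near $2\ell$ the last factor is $\frac{n-\ell}{n-\ell-k+1}$, which equals $\ell/2$ at $n=2\ell$, $k=\ell-1$, so the product is exponential in $\ell$; simultaneously, treating $\binom{n-2}{\ell-2}$ as ``polynomial of degree $\ell-2$'' is only meaningful when $n$ is large relative to $\ell$, so the heuristic collapses precisely where the inequality is tightest. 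Finally, the ``degenerate small-$n$ cases'' you defer to direct computation form an infinite family (all $\ell$, all $2\le k<\ell$, with $n$ in a range growing with $\ell$), not a finite check — and at the extreme boundary the statement is actually false as written: at $n=\ell+1$ the right side of the reduced inequality vanishes, and, e.g., $(k,\ell,n)=(2,4,5)$ gives $\varphi(0)=28>21=\Gamma$. (This reflects a defect in the lemma's stated hypothesis $n\ge\ell+1$ — the paper only ever invokes the lemma with $n\ge2\ell$, where its chain is sound — but your plan, which takes the stated range at face value and promises to verify the boundary directly, would run into it.) So the correct skeleton is yours, but the two steps you leave open are precisely where the content lies, and one of them, as proposed, is refuted by a small concrete case.
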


The proofs of \ref{lemma3.2.5}--\ref{lemma3.2.6} and Lemma \ref{lemmafn-4b}--\ref{lemmafn-l-1} are posted in Subsection 2,3,4, respectively.
Based on the results above, we are ready to prove our main result now.
\bigskip
\newline
{\em Proof of Theorem \ref{maintheo}}\:\:
Let $n, k, \ell$  be positive integers with $n\ge  2\ell >2k$ or $n>  2\ell=2k$.
Let $\mathcal{A}\subseteq {\binom{[n]}{k}}$ and $\mathcal{B}\subseteq {\binom{[n]}{\ell}}$ be cross-intersecting with $\cap_{F\in \mathcal{A} \cup \mathcal{B}} F=\emptyset$.
It is enough to consider $\cal{A}$ and $\cal{B}$ satisfying (C1) and (C2).
We first prove inequality (\ref{equamain}) by the following four statements.

\begin{itemize}
\item
If $|\mathcal{B}|> {\binom{n-1}{\ell-1}}$, then inequality (\ref{equamain}) holds by Theorem \ref{theo3} and the fact of
\begin{small}
\begin{eqnarray*}
\left({\binom{n-1}{\ell-1}}+1\right)\left({\binom{n-1}{k-1}}-{\binom{n-\ell-1}{k-1}}\right) \le \left({\binom{n-1}{k-1}}+1\right)\left({\binom{n-1}{\ell-1}}-{\binom{n-k-1}{\ell-1}}\right);
\end{eqnarray*}
\end{small}
\item
Otherwise if $|\mathcal{A}|\le {\binom{n-1}{k-1}}$ and $|\mathcal{B}|\le {\binom{n-1}{\ell-1}}$, then inequality (\ref{equamain}) holds by Theorem \ref{mors2}; 

\item Otherwise if ${\binom{n-1}{k-1}}+1 \le |\mathcal{A}|< {\binom{n-1}{k-1}}+\sum_{i=3}^4{\binom{n-i}{k-2}}$ or
$ {\binom{n-1}{k-1}}+\sum_{i=3}^4{\binom{n-i}{k-2}} \le |\mathcal{A}| < {\binom{n-1}{k-1}}+\sum_{i=3}^5{\binom{n-i}{k-2}}$ with $n<\ell^2$ or
$ |\mathcal{A}|\ge {\binom{n-1}{k-1}}+\sum_{i=3}^{\ell+1}{\binom{n-i}{k-2}}$, then inequality (\ref{equamain}) holds by Corollary \ref{coroA};

\item
  Otherwise ${\binom{n-1}{k-1}}+{\binom{n-3}{k-2}}+{\binom{n-4}{k-2}}\le |\mathcal{A}|< {\binom{n-1}{k-1}}+\sum_{i=3}^{5}{\binom{n-i}{k-2}}$ for $n\ge \ell^2$, or ${\binom{n-1}{k-1}}+\sum_{i=3}^{5}{\binom{n-i}{k-2}} \le |\mathcal{A}| < {\binom{n-1}{k-1}}+\sum_{i=3}^{\ell+1}{\binom{n-i}{k-2}}$,
then inequality (\ref{equamain}) holds by Proposition \ref{propfunction}.
\end{itemize}

Notice that from the proofs before, all equation in (\ref{equamain}) holds only if $|\cal{A}||\cal{B}|= \left({\binom{n-1}{k-1}}+{\binom{n-2 }{k-1}}\right){\binom{n-2}{\ell-2}}$ or $\left({\binom{n-1}{k-1}}+1\right)\left({\binom{n-1}{\ell-1}}-{\binom{n-k-1}{\ell-1}}\right)$, thus the extremal families in Theorem \ref{maintheo} are unique up to isomorphism by Proposition \ref{propextremal}.
This completes the proof.
\qed

\subsection{Proof of Claims \ref{lemma3.2.5}--\ref{lemma3.2.6} and Lemma \ref{lemmafn-4b}}
{\em Proof of Claim \ref{lemma3.2.5}}
\:\:
The condition $2k> 1.2\ell\ge 6$ implies that $k\ge 4$ and $\ell \ge 5$. %Furthermore, if $k=4$ then $\ell=5$ or $6$.
Set \[f(n):=\frac{{\binom{n-5}{\ell-4}}}{{\binom{n-2}{\ell-2}}}={\frac{(n-\ell)(\ell-2)(\ell-3)}{(n-2)(n-3)(n-4)}}.\]
It is easy to see $f(n)\le f(2\ell)=\frac{\ell(\ell-3)}{4(\ell-1)(2\ell-3)}<\frac{(\ell-1)(\ell-2)}{4(\ell-1)(2\ell-3)}<\frac 1 8$.
However, we need a better upper bound when $k=4$, which implies that $\ell=5$ or $6$ from $2k> 1.2\ell\ge 6$.
By direct calculation, we know that $\frac{\ell(\ell-3)}{4(\ell-1)(2\ell-3)}<\frac 1 {10}$ for $\ell=5$ or $6$.
Therefore, combining with
${\binom{n-1}{k-1}}+{\binom{n-2 }{k-1}}=(2-{\frac{k-1}{n-1}}){\binom{n-1}{k-1}}$, we have
\[
\left({\binom{n-1}{k-1}}+{\binom{n-2 }{k-1}}\right)\left({\binom{n-2}{\ell-2}}+{\binom{n-5}{\ell-4}}\right)<
 \left\{
          \begin{array}{ll}
            {\frac{9}{8}}\left(2-{\frac{k-1}{n-1}}\right){\binom{n-1}{k-1}}{\binom{n-2}{\ell-2}}, & \hbox{if $\ell \ge 7$;} \\
            \frac {11} {10}\left(2-{\frac{k-1}{n-1}}\right){\binom{n-1}{k-1}}{\binom{n-2}{\ell-2}}, & \hbox{if $\ell \in \{5,6\}$.}
          \end{array}
        \right.
\]
We first consider the case $k\ge 5$. Notice that
 \begin{eqnarray}
  \frac {{\frac{9}{8}} \left(2-{\frac{k-1}{n-1}} \right)\binom{n-1}{k-1}{\binom{n-2}{\ell-2}}} {\binom{n-1}{k-1}\left({\binom{n-1}{\ell-1}}-{\binom{n-k-1}{\ell-1}}\right)}
 &=&  \frac {{\frac{9}{8}} \left(2-{\frac{k-1}{n-1}} \right){\binom{n-2}{\ell-2}}}  {{\binom{n-2}{\ell-2}}+{\binom{n-3}{\ell-2}}+\cdots+{\binom{n-k-1}{\ell-2}}} \nonumber\\
 &= & \frac { {\frac{9}{8}} \left(2-{\frac{k-1}{n-1}} \right) }{ 1+ {\frac{n-\ell}{n-2}}+ {\frac{(n-\ell)(n-\ell-1)}{(n-2)(n-3)}} + \cdots + {\frac{(n-\ell)\cdots(n-\ell-k+2)}{(n-2)\cdots(n-k)}} }   \nonumber\\
 &<& \frac { {\frac{9}{8}} \left(2-{\frac{0.6\ell-1}{n-1}} \right)}{ 1+ {\frac{n-\ell}{n-2}} + \cdots + {\frac{(n-\ell)\cdots(n-\ell-3)}{(n-2)\cdots(n-5)}}}, \label{lemma3.2.5a}
\end{eqnarray}
where the last inequality follows from $k\ge 5$.
Set \[g(n)=1+ {\frac{n-\ell}{n-2}} + \cdots + {\frac{(n-\ell)\cdots(n-\ell-3)}{(n-2)\cdots(n-5)}}-{\frac{9}{8}} \left(2-{\frac{0.6\ell-1}{n-1}} \right). \]
%For fixed $\ell$, denote $g(n)$ as the denominator of divisor in (\ref{lemma3.2.5a}) minus the numerator  of divisor in (\ref{lemma3.2.5a}).
Notice that inequality (\ref{eqklemma3.2.5}) holds if RHS of inequality (\ref{lemma3.2.5a}) is larger than 1, that is, $g(n)>0$.
Now we will prove that $g(n)>0$.
Firstly,
\[g'(n)>{\frac{\ell-2}{(n-2)^2}}-{\frac{9}{8}}{\frac{0.6\ell-1}{(n-1)^2}}>{\frac{\ell-2-{\frac{9}{8}}(0.6\ell-1)}{(n-2)^2}}={\frac{0.325\ell-7/8 }{ (n-2)^2}}>0,\]
where the last inequality follows from $\ell \ge 5$.
Denote $h(\ell):=g(2\ell)$.
Secondly,
$$h'(\ell)=-\frac{3(624\ell^6 - 4880\ell^5 + 13672\ell^4 - 15820\ell^3 + 4559\ell^2 + 4230\ell - 2325)}{40(2\ell - 5)^2(\ell - 1)^2(2\ell - 1)^2(2\ell - 3)^2}.$$
Denote $m(\ell)=624\ell^6 - 4880\ell^5 + 13672\ell^4 - 15820\ell^3 + 4559\ell^2 + 4230\ell - 2325$.
Since $m(\ell)>(624\ell - 4880)\ell^5>0$ for $\ell\ge 8$, $m(7)=19045704$, $m(6)=5655435$  and $m(5)=1200300$,
we have $h'(\ell)<0$ for all $\ell \ge 5$.
Consequently,
$g(n)\ge g(2\ell)\ge \lim_{\ell\rightarrow +\infty} h(\ell)=0.025>0,$
and hence inequality (\ref{eqklemma3.2.5}) holds. Similarly, inequality (\ref{eqklemma3.2.5}) holds for $k=4$.
\qed
\medskip
\newline
{\em Proof of Claim \ref{lemma3.2.6}}
\:\:
By direct calculation, inequality (\ref{eqklemma3.2.6}) holds for $\ell=4$, and $k=2,3$, clearly.
So consider the case $k\le 0.6\ell$.
Set
\[f(n):=\begin{small}{\frac{\left({\binom{n-1}{k-1}}+{\binom{n-2 }{k-1}}\right){\binom{n-5}{\ell-4}}}{{\binom{n-5}{k-1}}{\binom{n-2}{\ell-2}}}}={\frac{(2n-k-1)(n-\ell)(\ell-2)(\ell-3)}{(n-k)(n-k-1)(n-k-2)(n-k-3)}}\end{small} \:\:{\rm and}\:\:
g(n)=:\text{log} f(n).\]
Firstly,
\[g'(n)={\frac{2}{2n-k-1}}+{\frac{1}{n-\ell}}-{\frac{1}{n-k}}-{\frac{1}{n-k-1}}-{\frac{1}{n-k-2}}-{\frac{1}{n-k-3}}<0\]
since ${\frac{2}{2n-k-1}}<{\frac{1}{n-k}}$ and ${\frac{1}{n-\ell}}<{\frac{3}{n-k-1}}<{\frac{1}{n-k-1}}+{\frac{1}{n-k-2}}+{\frac{1}{n-k-3}}$.
Hence $f(n)$ is decreasing for $n\ge 2\ell$, and we have
 \begin{equation} \label{eq3.2.6}
 f(n)\le f(2\ell) = {\frac{(4\ell-k-1)\ell(\ell-2)(\ell-3)}{(2\ell-k)(2\ell-k-1)(2\ell-k-2)(2\ell-k-3)}}.
 \end{equation}
Clearly, the RHS of (\ref{eq3.2.6}) is less than 1 for $k=2$, and $(k,\ell)=(3,6)$.
Notice that $k<0.6\ell$ with $\ell\le 6$ implies that $k\le 3$. %Furthermore, if $\ell=6$ then $k\le 3$.
So inequality (\ref{eqklemma3.2.6}) holds for $\ell\le 6$.

Now assume that $\ell\ge 7$.
Combining $k<0.6\ell$ and $\frac{\ell-i}{2\ell-k-i}<\frac{\ell}{2\ell-k}$ for $i=2,3$, we obtain that
 \[f(2\ell)= {\frac{2\ell-k}{2\ell-k-1}}{\frac{(4\ell-k-1)\ell(\ell-2)(\ell-3)}{(2\ell-k)^2(2\ell-k-2)(2\ell-k-3)}}  < {\frac{1.12(4\ell-k)\ell^3}{(2\ell-k)^4}}:={\frac{1.12(4-x)}{(2-x)^4}},\]
where $k=x\ell$ with $0<x\le 0.6$.
Since $\frac{1.12(4-x)}{(2-x)^4}$ is increasing on $(0,0.6]$, we have $\frac{1.12(4-x)}{(2-x)^4}\le {\frac{1.12\times3.4}{1.4^4}}<1$.
Hence inequality (\ref{eqklemma3.2.6}) holds.
\qed
\medskip
\newline {\em Proof of Lemma \ref{lemmafn-l-1}}\:\:
We first assume that $(k,\ell)\neq (2,3)$.
Notice that
\begin{small}
\begin{eqnarray*}
\varphi (n-\ell-1)&=&\left({\binom{n-1}{k-1}}+{\binom{n-2 }{k-1}}\right)\left({\binom{n-2}{\ell-2}}+1\right)-{\binom{n-2}{\ell-2}}{\binom{n-\ell-1}{k-1}} \\
&\le& \Gamma + {\binom{n-1}{k-1}}+{\binom{n-2 }{k-1}}-{\binom{n-2}{\ell-2}}{\binom{n-\ell-1}{k-1}}\\
&=& \Gamma + \left({\frac{n-1}{n-k}}+1\right){\binom{n-2 }{k-1}}-{\binom{n-2}{\ell-2}}{\binom{n-\ell-1}{k-1}}\\
&\le &  \Gamma,
\end{eqnarray*}
\end{small}
where the second inequality follows from
 $\left({\frac{n-1}{n-k}}+1\right){\binom{n-2 }{k-1}}<3 {\binom{n-2 }{k-1}}\le {\binom{n-2}{k-1} }{\binom{k+1}{k-1}} \le {\binom{n-2}{\ell-2}}{\binom{n-\ell-1}{k-1}}$.
 Now we assume that $(k,\ell)=(2,3)$. By direct calculation, we have
\[\Gamma=\max\{(2n-3)(n-2),n(2n-5)\}=2n^2-5n>2n^2-6n+5=\varphi (n-\ell-1).\]
\qed
\medskip
\newline
{\em Proof of Lemma \ref{lemmafn-4b}}\:\:
Notice that
\begin{small}
\begin{eqnarray*}
\varphi (n-4)&=&\hspace{-2mm}\left({\binom{n-1}{k-1}}+{\binom{n-2 }{k-1}}\right)\left({\binom{n-2}{\ell-2}}+{\binom{n-4}{\ell-3}}\right)-{\binom{n-2}{\ell-2}}{\binom{n-4}{k-1}} \\
&=&\hspace{-2mm}\left({\binom{n-1}{k-1}}+{\binom{n-2 }{k-1}}\right){\binom{n-2}{\ell-2}} +\left(\left({\binom{n-1}{k-1}}+{\binom{n-2 }{k-1}}\right){\binom{n-4}{\ell-3}}-{\binom{n-2}{\ell-2}}{\binom{n-4}{k-1}}\right) \\
&<& \hspace{-2mm}\Gamma +2{\binom{n-1}{k-1}}{\binom{n-4}{\ell-3}}-{\binom{n-2}{\ell-2}}{\binom{n-4}{k-1}}.
\end{eqnarray*}
\end{small}
To complete the proof, it suffices to prove that
\begin{eqnarray}\label{eqn-4l^2}
\frac{2{\binom{n-1}{k-1}}{\binom{n-4}{\ell-3}}}{{\binom{n-2}{\ell-2}}{\binom{n-4}{k-1}}}
&= &  \frac{(2\ell-4)(n-1)(n-\ell)}{(n-k)(n-k-1)(n-k-2)}\le 1.
\end{eqnarray}
In fact, for $n\ge \ell^2$,
$n-2>n-k$, $2\ell-4<n-k-2$ and $(n-k)+(n-k-1)+(n-k-2)=3n-3k-3\ge 2n+2\ell-7=(2\ell-4)+(n-1)+(n-2)$.
Thereby,  inequality (\ref{eqn-4l^2}) holds.
\qed
\medskip

\subsection{Proof of Proposition \ref{propfunction}}
Denote
\begin{small}
\begin{eqnarray*}
\mathcal{A}'=\left\{A\in {\binom{[3,n]}{k-1}}: \{2\}\cup A \in \mathcal{A}\right\} \: {\rm and} \:\:
\mathcal{B}'=\left\{B\in {\binom{[3,n]}{\ell-1}}: \{1\} \cup B \in \mathcal{B}\right\}.
\end{eqnarray*}
\end{small}
The cross-intersecting property of $\mathcal{A}$ and $\mathcal{B}$ yields that $\mathcal{A}'$ and $\mathcal{B}'$ are cross-intersecting with $|\mathcal{A}|={\binom{n-1}{k-1}}+|\mathcal{A}'|$ and $|\mathcal{B}|={\binom{n-2}{\ell-2}}+|\mathcal{B}'|$.
Recall that $\overline{\mathcal{B}'}=\{[3,n]\setminus B: B \in \mathcal{B}'\}\subseteq {\binom{[3,n]}{n-\ell-1}}$ with $|\overline{\mathcal{B}'}|=|\overline{\mathcal{B}}|$.
Denote  $|\overline{\mathcal{B}'}|={\binom{x}{n-\ell-1}}$, where $n-\ell-1\le x \le n-5$ for all possible $n$, and $n-5<x\le n-4$ for $n\le \ell^2$.
Notice that $\ell \ge n-x-1$ since $\mathcal{B}'\neq \emptyset$.
We first consider the case $n-\ell-1\le x \le n-5$.
By Theorem \ref{Lov1979}, $|\Delta_{k-1}(\overline{\mathcal{B}'})|\ge {\binom{x}{k-1}}$.
As $\mathcal{A}'$ and $\mathcal{B}'$ are cross-intersecting,  $A \nsubseteq B$ for all $A\in \mathcal{A}'$ and $B\in \overline{\mathcal{B}'}$,
which implies that
$
\mathcal{A}\cap \Delta_{k-1}(\overline{\mathcal{B}}) =\emptyset.
$
Hence
\begin{small}
\begin{eqnarray*}
|\mathcal{A}'| \le   {\binom{n-2}{k-1}}- |\Delta_{k-1}(\overline{\mathcal{B}'})|
\le  {\binom{n-2}{k-1}}-{\binom{x}{k-1}},
\end{eqnarray*}
\end{small}
and so
\begin{small}
\begin{eqnarray*}
|\mathcal{A}||\mathcal{B}| &\le& \left( {\binom{n-1}{k-1}}+{\binom{n-2}{k-1}}-{\binom{x}{k-1}}\right)\left( {\binom{n-2}{\ell-2}}+{\binom{x}{n-\ell-1}}\right)<\varphi (x).
\end{eqnarray*}
\end{small}
Recall the definitions of $\varphi (x)$ in (\ref{varphix}) and $\Gamma$ in (\ref{Gamma}).
To complete the proof of inequality (\ref{equamain}) in this case, it suffices to prove that $\varphi (x)< \Gamma$ for all $n-\ell-1\le x \le n-5$.
By Lemma \ref{lemma3.2.5} and Lemma \ref{lemma3.2.6}, we have $\varphi (n-5)<\Gamma$; and $\varphi (n-\ell-1)<\Gamma$ by Lemma \ref{lemmafn-l-1}.
We are going to prove that $\varphi (x)<\Gamma$ for all $x\in (n-\ell-1, n-5)$.
Notice that
% \begin{eqnarray*}
% \varphi' (x)&=& \left({\binom{n-1}{k-1}}+{\binom{n-2 }{k-1}}\right){\binom{x}{n-\ell-1}}\left( {\frac{1}{x}} +{\frac{1}{x-1} } + \cdots +{\frac{1}{x-n+\ell+2}}   \right) \\
% &&-{\binom{n-2}{\ell-2}}{\binom{x}{k-1}}\left( {\frac{1}{x}} +{\frac{1}{x-1} } + \cdots +{\frac{1}{x-k+2}}  \right),
% \end{eqnarray*}
 \begin{small}
\begin{eqnarray*}
 \varphi'' (x)&=& %\left({\binom{n-1}{k-1}}+{\binom{n-2 }{k-1}}\right){\binom{x}{n-\ell-1}}\left( {\frac{1}{x}} +{\frac{1}{x-1} } + \cdots +{\frac{1}{x-n+\ell+2}}   \right)^2- \\
% && \left({\binom{n-1}{k-1}}+{\binom{n-2 }{k-1}}\right){\binom{x}{n-\ell-1}}\left( {\frac{1}{x}^2} +{\frac{1}{(x-1)^2} } + \cdots +{\frac{1}{(x-n+\ell+2)^2}}   \right)- \\
% &&{\binom{n-2}{\ell-2}}{\binom{x}{k}-1}\left( {\frac{1}{x}} +{\frac{1}{x-1} } + \cdots +{\frac{1}{x-k+2}}  \right)^2+ \\
%  && {\binom{n-2}{\ell-2}}{\binom{x}{k}-1}\left( {\frac{1}{x}^2} +{\frac{1}{(x-1)^2} } + \cdots +{\frac{1}{(x-k+2)^2}}   \right) \\
 \left({\binom{n-1}{k-1}}+{\binom{n-2 }{k-1}}\right){\binom{x}{n-\ell-1}}\left( {\frac{1}{x(x-1)}} + \cdots +{\frac{1}{(x-n+\ell+3)(x-n+\ell+2)}} \right) \\
  && -{\binom{n-2}{\ell-2}}{\binom{x}{k-1}}\left( {\frac{1}{x(x-1)}} + \cdots +{\frac{1}{(x-k+3)(x-k+2)}} \right).
 \end{eqnarray*}
 \end{small}
If $\varphi'' (x)> 0$ for all $x\in (n-\ell-1, n-5)$, then $\varphi (x)\le \max \{\varphi (n-5),\varphi (n-\ell-1)\}<\Gamma$.
So assume that $\varphi'' (x)\le 0$ for some $x\in (n-\ell-1, n-5)$.
Let $x_0 \in (n-\ell-1, n-5)$ be an arbitrary maximum element.
So $ \varphi'' (x_0) =0$.
As
%\begin{small}
\[ {\frac{1}{x_0(x_0-1)}} + \cdots +{\frac{1}{(x_0-n+\ell+3)(x_0-n+\ell+2)}} < {\frac{1}{x_0(x_0-1)}}  + \cdots +{\frac{1}{(x_0-k+3)(x_0-k+2)}},
\]
%\end{small}
\newline
we have
 $({\binom{n-1}{k-1}}+{\binom{n-2 }{k-1}}){\binom{x_0}{n-\ell-1}}<{\binom{n-2}{\ell-2}}{\binom{x_0}{k-1}}$.
It follows that
\begin{small}
\begin{eqnarray*}
\varphi (x_0)&=&\left({\binom{n-1}{k-1}}+{\binom{n-2 }{k-1}}\right){\binom{n-2}{\ell-2}}+\left({\binom{n-1}{k-1}}+{\binom{n-2 }{k-1}}\right)\binom{x_0}{n-\ell-1}-{\binom{n-2}{\ell-2}}{\binom{x_0}{k-1}} \\
&<& \left({\binom{n-1}{k-1}}+{\binom{n-2 }{k-1}}\right){\binom{n-2}{\ell-2}}\le \Gamma.
\end{eqnarray*}
\end{small}
Similarly, we are done for $n-5\le x \le n-4$ with $n\ge \ell^2$, since $\varphi (n-4)<\Gamma$ by Lemma \ref{lemmafn-4b}.
\qed

\subsection{Proof of Proposition \ref{propextremal}}
Let $\mathcal{A}'\subseteq {\binom{[n]}{k}}$ and
$\mathcal{B}'\subseteq {\binom{[n]}{l}}$ be two cross-intersecting families with maximum product size.
 Firstly, we assume that $|\mathcal{A}'||\mathcal{B}'|=({\binom{n-1}{k-1}}+1)({\binom{n-1}{\ell-1}}-{\binom{n-k-1}{\ell-1}})$.
 Hence $|\mathcal{A}'|={\binom{n-1}{k-1}}+1$ and $|\mathcal{B}'|={\binom{n-1}{\ell-1}}-{\binom{n-k-1}{\ell-1}}$ by the before proofs.
Recall $\overline{\mathcal{B}'}=\{[n]\setminus B:B\in \mathcal{B}'\}$.
Then $\overline{\mathcal{B}'}\subseteq {\binom{[n]}{n-\ell}}$ with
\begin{small}\begin{equation}\label{equtheo1}
 |\overline{\mathcal{B}'}|={\binom{n-1}{\ell-1}}-{\binom{n-k-1}{\ell-1}}={\binom{n-2}{n-\ell}}+{\binom{n-3}{n-\ell-1}}+\dots+{\binom{n-k-1}{n-k-\ell+1}},
 \end{equation}\end{small}
which is the $(n-\ell)$-cascade of $|\overline{\mathcal{B}'}|$.
Notice that
\begin{small}\begin{eqnarray*}
\mathcal{C}^{(n-k)}( |\overline{\mathcal{A}'}|)&=&{\binom{[n-2]}{n-\ell}}\bigcup \left(\bigcup_{i=1}^{k-1}\left\{D\cup \{n-i,n-1\}: D\in {\binom{[n-i-2]}{n-\ell-i}}\right\}\right).
\end{eqnarray*}\end{small}
Therefore
\begin{small}\begin{eqnarray*}
|\Delta_{k}(\mathcal{C}^{(n-k)}(|\overline{\mathcal{B}'}|))|= {\binom{n-2}{k}}+{\binom{n-3}{k-1}}+\dots+{\binom{n-k-1}{1}}={\binom{n-1}{k}}-1.
\end{eqnarray*}\end{small}
By Theorem \ref{kkl}, we have
$|\Delta_{k}(\overline{\mathcal{B}'})|\ge |\Delta_{k}(\mathcal{C}^{(n-k)}(|\overline{\mathcal{B}'}|))|={\binom{n-1}{k}}-1.$
As $\mathcal{A}'$ and $\mathcal{B}'$ are cross-intersecting, $A' \nsubseteq \overline{B'}$ for all $\overline{B'} \in \overline{\mathcal{B}'}$ and $A'\in \mathcal{A}'$, which implies $\mathcal{A}'\cap \Delta_{k}(\overline{\mathcal{B}'}) =\emptyset$.
Hence
\begin{small}
\begin{eqnarray*}
|\mathcal{A}'|\le {\binom{n}{k}}-|\Delta_{k}(\overline{\mathcal{B}'})|\le {\binom{n}{k}}-{\binom{n-1}{k}}+1={\binom{n-1}{k-1}}+1=|\mathcal{A}'|.
\end{eqnarray*}
\end{small}
Thus all equalities in above must be hold, and hence $|\Delta_{k}(\overline{\mathcal{B}'})|= |\Delta_{k}(\mathcal{C}^{(n-k)}(|\overline{\mathcal{B}'}|))|$.
Notice that the RHS of (\ref{equtheo1}) is the $(n-\ell)$-cascade of $|\overline{\mathcal{A}'}|$.
Setting $\ell'=k$ and $s=k$.
Then by Theorem \ref{mors2},
 the solution of $|\Delta_{k}(\overline{\mathcal{B}'})|= |\Delta_{k}(\mathcal{C}^{(n-k)}(|\overline{\mathcal{B}'}|))|$ is unique (view $\overline{\mathcal{B}'}$ as a variable with fixed size).
 Thus $ \overline{\mathcal{B}'} \cong \mathcal{C}^{(n-\ell)}(|\overline{\mathcal{B}'}|) $.
 It is not hard to see that
 $\mathcal{A}'  \cong  \{A\in {\binom{[n]}{k}}: 1\in A\}\cup \{[2,\ell+1]\} $ and
  $ \mathcal{B} \cong \{B\in {\binom{[n]}{\ell}}: 1\in B, B\cap [2,\ell+1]\neq \emptyset\} $.

The similar way (more simpler) adapt to the another case, that is,
 $|\mathcal{A}'||\mathcal{B}'|=({\binom{n-1}{k-1}}+{\binom{n-2 }{k-1}}){\binom{n-2}{\ell-2}}$.
 This completes the proof.
\qed

\section*{Acknowledgements}
The authors thank Suijie Wang for many valuable suggestions to improve our presentation
of the paper.
The first author is  supported by the National Natural Science Foundation of Hunan Province, China (No. 2023JJ30385).
The second author is supported by the National Natural Science Foundation of China  (No.12371332 and
No.11971439).

\frenchspacing

\end{document}